\documentclass{article}

\usepackage{amsmath,amssymb,amsthm}
\usepackage[colorlinks=true]{hyperref}
\usepackage{pdfsync}
\usepackage[dvips]{graphicx}
\usepackage{appendix}

\usepackage[applemac]{inputenc}
 
\topmargin -1cm
\textheight 21cm
\textwidth 15cm 
\oddsidemargin 1cm

\def\R{\mathbb{R}}
\def\N{\mathbb{N}}

\newtheorem{thm}{Theorem}
\newtheorem{lem}{Lemma}
\newtheorem{prop}{Proposition}
\newtheorem{cor}{Corollary}
\theoremstyle{definition}\newtheorem{rem}{Remark}
\theoremstyle{definition}
\theoremstyle{definition}\newtheorem{defi}{Definition}

\renewcommand{\d}{\displaystyle}

\renewcommand{\geq}{\geqslant}
\renewcommand{\leq}{\leqslant}

\newcommand{\MTTP}{{\bf (MTTP)}}

\title{Planar tilting maneuver of a spacecraft: singular arcs in the minimum time problem and chattering}

\author{Jiamin Zhu\footnote{Sorbonne Universit\'es, UPMC Univ Paris 06, CNRS UMR 7598, Laboratoire Jacques-Louis Lions, F-75005, Paris, France (\texttt{zhu@ann.jussieu.fr}).}
\and
Emmanuel Tr\'elat\footnote{Sorbonne Universit\'es, UPMC Univ Paris 06, CNRS UMR 7598, Laboratoire Jacques-Louis Lions, Institut Universitaire de France, F-75005, Paris, France (\texttt{emmanuel.trelat@upmc.fr}).}
\and
Max Cerf\footnote{Airbus Defence and Space, Flight Control Unit, 66 route de Verneuil, BP 3002, 78133 Les Mureaux Cedex, France (\texttt{max.cerf@astrium.eads.net}).}
}

\date{}

\begin{document}

\maketitle

\begin{abstract}
In this paper, we study the minimum time planar tilting maneuver of a spacecraft, from the theoretical as well as from the numerical point of view, with a particular focus on the chattering phenomenon. We prove that there exist optimal chattering arcs when a singular junction occurs. Our study is based on the Pontryagin Maximum Principle and on results by M.I. Zelikin and V.F. Borisov. We give sufficient conditions on the initial values under which the optimal solutions do not contain any singular arc, and are bang-bang with a finite number of switchings. Moreover, we implement sub-optimal strategies by replacing the chattering control with a fixed number of piecewise constant controls. Numerical simulations illustrate our results.
\end{abstract}

\vspace{0.5cm}

\textbf{Keywords:} spacecraft planar tilting maneuver; minimum time control; Pontryagin Maximum Principle; singular control; chattering arcs; sub-optimal strategy.

\tableofcontents

\section{Introduction} \label{sec_Intro}
The minimum time planar tilting problem of a spacecraft consists of controlling the spacecraft, with certain prescribed terminal conditions on the attitude angles, accelerations, and the velocity direction, while minimizing the maneuver time and keeping constant the yaw and rotation angles. This problem is of interest for (at least) two reasons. The first one is that the resulting optimal strategy can be used during the rocket ascent phase, along which the attitude and the orbit motions are strongly coupled. The second one is that, as we will prove in this paper, the optimal trajectories, solutions of the problem, exhibit a chattering phenomenon which is, in itself, difficult and thus interesting to analyze, but which is also rather a bad news in view of practical issues. We thus analyze it in detail, providing sufficient conditions on the terminal conditions under which the optimal strategy does not involve any chattering, and in case chattering occurs, we provide alternative sub-optimal strategies.

\subsection{The optimal control problem} \label{sec_ProbStat}
Let us formulate the minimum time planar tilting maneuver problem (pitching movement of the spacecraft). Throughout the paper, we restrict our study to the planar case, in the sense that the spacecraft movement remains in a plane.

\paragraph{Model.}
We assume that the Earth is a fixed ball in the inertial space, and that the velocity of the wind is zero. We consider an axial symmetric spacecraft (see Figure \ref{planar_model}). Taking coordinates $(x,y)$, we adopt the following notations:
\begin{itemize}
\item $v_x$ and $v_y$ are the velocity components of the velocity vector $\vec{v}$;
\item $\theta$ is the pitch angle of the spacecraft;
\item $\omega$ is the angular velocity with respect to the Earth;
\item $r>0$ is the distance between the spacecraft mass center $O_b$ and the center $O$ of the Earth;
\item $\ell>0$ is the distance from the thrust point $P$ to the mass center of the spacecraft $O_b$;
\item $\vec{e}_a$ is the unit vector along the symmetric axis of the spacecraft, and $\vec{e}_c$ is the unit vector perpendicular to $\vec{e}_a$ pointing to the North;
\item $I$ is the moment of inertia along the $\vec{e}_a \times \vec{e}_c$ axis;
\item $\mu$ is the angle between the thrust vector $\vec{T}$ and the symmetric axis $\vec{e}_a$ of the spacecraft, and we must have $|\mu| \leq \mu_{max}$;
\item $\gamma$ is the flight path angle defined as the angle between the velocity $\vec{v}$ and the axis $\vec{x}$.
\end{itemize}

\begin{figure}[h]
\centering
	 \includegraphics[scale = 0.9]{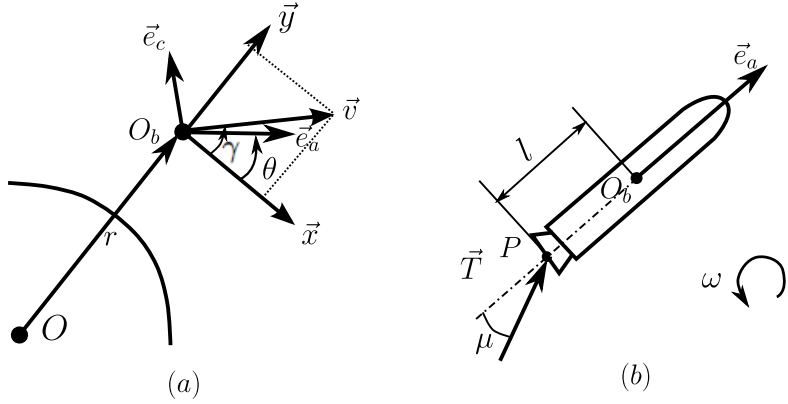}
	\caption{ Frames and parameters in problem $\MTTP$.}
	\label{planar_model}
\end{figure}

The motion of the spacecraft is controlled by the angle $\mu$. Since $\mu$ is small in practice (between $\pm 10$ degrees), we assume that $cos \mu \approx 1$ and $sin \mu \approx \mu$. Under this small angle assumption, the spacecraft evolves in time according to the system
\begin{equation}\label{sys1}
\begin{split}
	\dot{v}_x&= a \cos \theta  - c v_x v_y , \\
	\dot{v}_y&= a \sin \theta + c v_x^2 - g_0, \\
	\dot{\theta} &= \omega - c v_x, \\
	\dot{\omega}&= b u ,
\end{split}
\end{equation}
with control $u=-\mu/\mu_{max} \in [-1,1]$ and $\d{a=T/m}$, $\d{c=1/r}$, $\d{b=T \ell \mu_{max}/I}$ being positive constants.

Actually, in our numerical simulations, we will use the parameters of Ariane 5 launchers (see Table \ref{sim_param}).
The modulus of the velocity $v = \sqrt{v_x^2+v_y^2}$ takes values in $[0,v_{m}]$, and for the pitch angle and the angular velocity we have the estimate $\vert\theta\vert\leq \theta_{\max}$ and $\vert\omega\vert\leq \omega_{\max}$. 

\begin{table}[h] 
\centering
\begin{tabular}{|c|c|c|c|c|c|c|}
  \hline
     & $a$ & $b$ & $c$ & $v_{m}\ (m/s)$ & $\omega_{\max}\ (rad/s)$ & $\theta_{\max} (rad)$\\
  \hline
   Value  & $12$ & $0.02$ & $1\times 10^{-6}$ & $5000$ & $0.3$ & $\pi$\\
  \hline
\end{tabular}
\caption{System parameters.}
\label{sim_param}
\end{table}

In the sequel, for convenience, we set $x_1=v_x$, $x_2=v_y$, $x_3=\theta$ and $x_4=\omega$. Denoting by $x=(x_1,x_2,x_3,x_4)$, the system \eqref{sys1} can be written as the \emph{single-input control-affine system}
\begin{equation}\label{singleinputcontrolaffinesystem}
	\dot{x} = f_0 (x) + u f_1(x) ,
\end{equation}
where $f_0$ and $f_1$ are the smooth vector fields on $\R^4$ defined by
\begin{equation}\label{def_f0f1}
f_0 = ( a \cos x_3  - c x_1 x_2 ) \frac{\partial}{\partial x_1}
	     + (a \sin x_3 + c x_1^2- g_0 ) \frac{\partial}{\partial x_2}
	     + (x_4 - c x_1) \frac{\partial}{\partial x_3} ,\qquad
f_1 = b \frac{\partial}{\partial x_4} .
\end{equation}

\paragraph{Terminal conditions.}
The requirements are the following:
\begin{itemize}
\item all initial variables are fixed;
\item the final values of the variables $\theta$ and $\omega$ are prescribed, and we require that, at the final time $t_f$ (which is let free), the velocity vector $\vec{v}(t_f)$ be parallel to the spacecraft axis $\vec{e}_a(t_f)$. 
\end{itemize}
It is indeed natural to consider $\vec{v}(t_f) \parallel \vec{e}_a(t_f)$ as a terminal condition, because the spacecraft considered is of rocket-type, and such spacecrafts are usually planned to maintain a small angle of attack along the flight. Note that, here, the angle of attack is the angle between the spacecraft axis $\vec{e}_a$ and the velocity $\vec{v}$. The zero angle of attack condition ensures that the aerolift is null in order to avoid excessive loading of the structure (see \cite{BLAKELOCK}).

Since $\gamma = \arctan (x_2/x_1)$ and $v = \sqrt{x_1^2 + x_2^2}$, we have 
\begin{equation} \label{flightangle}
	\dot{\gamma} = (a \sin (x_3 - \gamma)- g_0 \cos \gamma )/ v+c v \cos \gamma ,\qquad
	\dot{v} = a \cos (x_3 - \gamma) - g_0 \sin \gamma .
\end{equation}
The final condition above is then written as $\gamma(t_f) = x_{3}(t_f)$. In term of $v$ and $\gamma$, the velocity components $x_1$ and $x_2$ are $x_{1}=v \cos \gamma$ and $x_{2}=v \sin \gamma$. We set $v(0)=v_0$ and $\gamma(0)=\gamma_0$.

\paragraph{Minimum time planar tilting problem.}
Let $x_0 \in \R^4$, and let $v_0$, $\gamma_0$, $x_{30}$, $x_{40}$ and $x_{3f}$ be real numbers.
In terms of the variables $x=(x_1,x_2,x_3,x_4)$, the initial point is defined by
$$
x_0 =(x_{10},x_{20},x_{30},x_{40}),
$$
with $x_{10}=v_0 \cos \gamma_0$ and $x_{20}=v_0 \sin \gamma_0$, and the final target is the submanifold of $\R^4$ defined by
\begin{equation*}\label{} 
M_1 = \{ (x_1,x_2,x_3,x_4)\in\R^4 \mid x_2\cos x_{3f} - x_1 \sin x_{3f} = 0,\ x_3 = x_{3f},\ x_4=0 \}.
\end{equation*}
Throughout the paper, we consider the optimal control problem, denoted in short $\MTTP$, of steering the control system \eqref{sys1} from $x(0)=x_0$ to the final target $M_1$ in minimal time $t_f$, under the control constraint $u(t) \in [-1,1]$.

\subsection{State of the art}
The minimum time spacecraft attitude maneuver problem has been widely studied (see, e.g., \cite{Bilimoria,Fleming,Proulx,Shen}). Besides, there are many works on the coupled attitude orbit problem (see, e.g., \cite{Gong,Knutson,Wang}) and on the minimum time orbit transfer (see, e.g., \cite{Caillau,BFT,Kim,Thorne,Yue}). 

The problem $\MTTP$ under consideration in this paper is however more related to the well-known \emph{Markov-Dubins problem} (in short, MD problem) and to variants of it.
Indeed, if the system were to be directly controlled by the variable $x_3$, then, by taking the target manifold to be a single point ($x(t_f)=x_f$) and letting $a=b=1$, $c=0$, $g_0=0$, the system \eqref{sys1} would be written as
$$
\dot{x}_1= \cos x_3, \quad
\dot{x}_2= \sin x_3, \quad
\dot{x}_3= u, 
$$
and therefore, the problem $\MTTP$ coincides with the MD problem, which was first settled in \cite{Markov} and was analyzed in detail by Dubins and many others (see, e.g., \cite{Dubins,Reeds,Sussmann}). It has been shown that the optimal strategy for the MD problem consists in first reaching the singular arc with a single bang arc, then, in following this singular arc until one is sufficiently close to the final target, and finally, in leaving the singular arc in order to reach the target with a single bang arc.

If we assume that $g_0 \neq 0$, i.e., if we have the system
$$
\dot{x}_1= \cos x_3, \quad
\dot{x}_2= \sin x_3 - g_0, \quad
\dot{x}_3= u, 
$$
then the problem $\MTTP$ coincides with the \emph{Zermelo-Markov-Dubins problem} (in short, ZMD problem) with constant wind field $(w_x,w_y)=(0,-g_0)$ (see, e.g., \cite{Bakolas,McGee,Glizer,Techy}). The optimal strategy of this problem consists of a finite number of bang and singular arcs. Both the MD and the ZMD problems may involve a singular arc because the singular controls of these problems are of intrinsic order one (see further in the present paper for this notion). 

However, this is not the case for the problem $\MTTP$ for which the singular control is of intrinsic order two. In this sense, a problem closer to $\MTTP$ (with $a=b=1$, $c=0$, $g_0=0$) is the \emph{Markov-Dubins problem with angular acceleration control} (in short, MDPAAC) (see \cite{Laumond,Sussmann2}). In that problem, the model is a dynamic extension of the MD system, given by
$$
\dot{x}_1= \cos x_3, \quad
\dot{x}_2= \sin x_3 , \quad
\dot{x}_3= x_4,\quad
\dot{x}_4=u, 
$$
The existence of a chattering phenomenon for MDPAAC was first put in evidence in \cite{Sussmann2}. Although the optimality status of these chattering arcs remains unclear, the discussion of the chattering phenomenon brings interesting issues for the analysis of the present problem $\MTTP$.

The system we consider here can also be seen as a variation of the MD system, with nonconstant wind and controlled by the inertial control. Thus, we expect the solution of the problem $\MTTP$ to share properties similar to MDPAAC (in particular, chattering), MD and ZMD (in view of the global behavior of the solution). 

In fact, using \cite{Zelikin1}, we will be able to prove the existence and the optimality of the chattering phenomenon in the problem $\MTTP$. The chattering phenomenon (also occuring in MDPAAC) is caused by singular controls of intrinsic order two. It makes the optimal synthesis for the problem $\MTTP$ essentially different from that of the MD or ZMD problem. 

However, in some sense the optimal solution of problem $\MTTP$ consists as well of three pieces: the first piece consists of bang arcs to reach the singular arc, the second piece is a singular arc, and the third piece consists of a succession of bang arcs finally reaching the target submanifold. 

Since the chattering phenomenon causes difficulties in practical use, we will also provide sufficient conditions on the terminal conditions, under which the chattering arcs do not appear in the optimal solution. This prediction result will be useful in order to decide which numerical method (either direct, or indirect, or sub-optimal) is the most appropriate.

\subsection{Chattering phenomenon} \label{sec_chat}
Let us recall that we speak of a \textit{chattering phenomenon} (sometimes also called a Fuller's phenomenon), when the optimal control switchings an infinite number of times over a compact time interval. It is well known that, if the optimal trajectory of a given optimal control problem involves a singular arc of higher order, then no connection with a bang arc is possible and then bang arcs asymptotically joining the singular arc must chatter. On Figure \ref{chattering}(b), the control is singular over $(t_1,t_2)$, and the control $u(t)$ with $t \in (t_1-\epsilon_1,t_1) \cup (t_2,t_2+\epsilon_2)$, $\epsilon_1>0$, $\epsilon_2>0$ is chattering. The corresponding optimal trajectory is called a chattering trajectory. On Figure \ref{chattering}(a), the chattering trajectory ``oscillates'' around the singular part and finally ``gets off" the singular trajectory with an infinite number of switchings. 

In this paper, we call \textit{singular junction}, the junction point between a singular arc and a non-singular arc.

\begin{figure}[h]
\centering
	\includegraphics[scale=0.9]{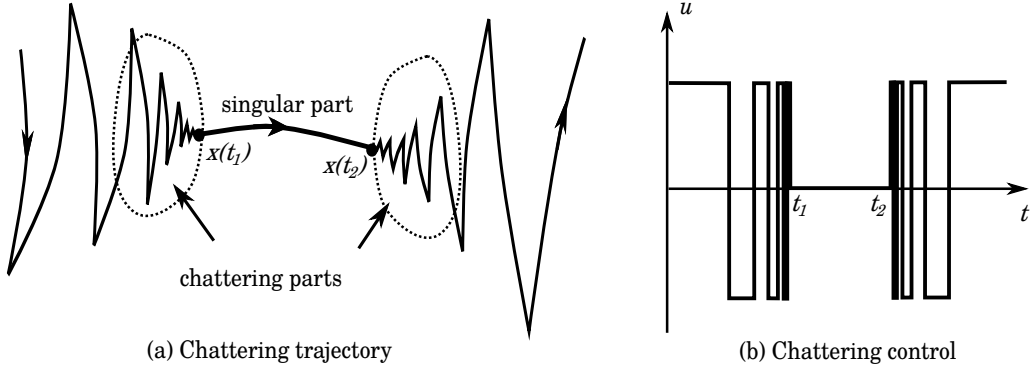}
	\caption{An illustration of chattering phenomenon.}
	\label{chattering}
\end{figure}

To better explain the chattering phenomenon, we recall the well-known Fuller problem (see  \cite{Fuller,MARCHAL}), which is the optimal control problem
\begin{equation*}
\left\{
\begin{split}
& \min \int_0^{t_f} x_1(t)^2 \, dt , \\
& \dot{x}_1(t)=x_2(t),\  \dot{x}_2(t)=u(t), \quad |u(t)| \leq 1,\\ 
& x_1(0)=x_{10},\ x_2(0)=x_{20},\ x_1(t_f)=0,\ x_2(t_f)=0,\quad t_f\ \textrm{free}.
\end{split} 
\right.
\end{equation*}
We define $ \xi = \left( \frac{\sqrt{33}-1}{24} \right)^{1/2}$ as the unique positive root of the equation $\d{\xi^4+\xi^2/12-1/18=0}$, and we define the sets
\begin{equation*}
\begin{split}
\Gamma_{+}&=\{ (x_1,x_2)\in \R^2 \mid x_1= \xi x_2^2,\ x_2<0 \} , \quad\ \ 
R_{+}=\{ (x_1,x_2)\in \R^2 \mid x_1 <  - \mathrm{sign} (x_2) \xi x_2^2 \} , \\
\Gamma_{-}&=\{ (x_1,x_2)\in \R^2 \mid x_1= - \xi x_2^2,\ x_2>0 \} , \quad
R_{-}=\{ (x_1,x_2)\in \R^2 \mid x_1 >  - \mathrm{sign} (x_2) \xi x_2^2 \} .
\end{split}
\end{equation*}
Then the optimal synthesis of the Fuller problem is the following (see \cite{Fuller2,SCHATTLER,Wonham}).
The optimal control is given in feedback form by
\begin{equation*}
	u^{\ast}=\begin{cases}
		\phantom{-}1 & \textrm{if}\ x \in R_{+} \bigcup \Gamma_{+} , \\
		-1& \textrm{if}\ x \in R_{-} \bigcup \Gamma_{-}  .
	  \end{cases}
\end{equation*}
The control switchings from $u=1$ to $u=-1$ at points on $\Gamma_{-}$ and from $u=-1$ to $u=1$ at points on $\Gamma_{+}$. The corresponding trajectories crossing the switching curves $\Gamma_{\pm}$ transversally are chattering arcs with an infinite number of switchings that accumulate with a geometric progression at the final time $t_f>0$.

The optimal synthesis for the Fuller problem is drawn on Figure \ref{Fuller}. 
The solutions of the Fuller problem are chattering solutions since they switch transversally on the switching curves $\Gamma_{\pm}$ until finally reaching the target point on the singular surface defined by the union of all singular solutions.

\begin{figure}[h]
\centering
	\includegraphics[scale=0.4]{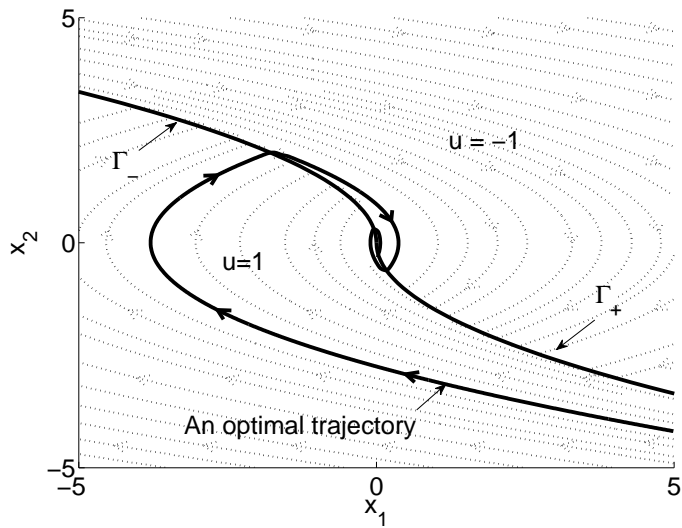}
	\caption{Optimal synthesis for the Fuller problem.}
	\label{Fuller}
\end{figure}

In fact, the optimal control of the Fuller problem, denoted as $u^{\ast}$, contains a countable set of switchings of the form
\begin{equation*}
	u^{\ast}(t)=\begin{cases}
		\phantom{-}1 & \textrm{if}\ t \in [t_{2k},t_{2k+1}), \\
		-1& \textrm{if}\ t \in [t_{2k+1},t_{2k+2}] ,
	  \end{cases}
\end{equation*}
where $\{ t_{k} \}_{k \in \mathbb{N}}$ is a set of switching times that satisfies ${ (t_{i+2} - t_{i+1}) < (t_{i+1} - t_{i})}$, $i \in \N$ and converges to $t_f < +\infty$. This means that the chattering arcs contain an infinite number of switchings within a finite time interval $t_f>0$.

\medskip

The analysis of chattering arcs is challenging. Based on a careful analysis of the Fuller problem, M.I. Zelikin and V.F. Borisov obtained a geometric portrait of solutions in the vicinity of the second order singular solutions (see \cite{Zelikin1,Zelikin2}). These solutions are called chattering solutions. Using their results, we will be able to prove rigorously the existence and optimality of chattering solutions in our problem $\MTTP$. 

The basic idea of their approach to provide sufficient conditions for optimality is based on the following well-known sufficient optimality condition:
\begin{quote}
\emph {Let $M$ be a smooth manifold of dimension $n$, and let $T^*M$ be its cotangent bundle, endowed with its canonical symplectic structure. If a submanifold $L$ of $T^*M$ generated by a given Hamiltonian system on $T^*M$ is Lagrangian, then a ``nice'' regular projection of trajectories of $L$ onto $M$ can also be seen, by canonical injection, as a Lagrangian submanifold of $T^*M$, and the trajectories are locally optimal in $C^0$ topology.} 
\end{quote}

Recall that a submanifold $L$ of a smooth manifold $M$ is said to be Lagrangian if ${\oint_{\gamma} p \, dx =0}$ for every piecewise smooth closed contour $\gamma$ on the manifold. Hence, the manifold consisting of the solutions of a Hamiltonian system with transversality condition ($p\, dx=0$ on the target manifold) is Lagrangian. Denote the cost functional to be minimized as $C(\cdot,\cdot)$. A trajectory $\bar{x}(\cdot)$ is said to be locally optimal in $C^0$ topology if, for every neighborhood $V$ of $\bar{x}(\cdot)$ in the state space, for every real number $\eta$ so that $| \eta | \leq \epsilon$, for every trajectory $x(\cdot)$, associated to a control $v$ on $[0,T+\eta]$, contained in $W$, and satisfying $x(0) = \bar{x}(0) = x_0$, $x(T+\eta) = \bar{x}(T)$, there holds $C(T+\eta,v) \geq C(T,u)$.

Hence, the problem of proving the local optimality of a solution comes down to constructing a Lagrangian submanifold. The usual way to construct a Lagrangian submanifold is to integrate backward in time the Hamiltonian system from the target point. However, this is not applicable for the chattering arcs because the control is not anymore piecewise constant and the length of switching intervals goes to zero at the singular junction. 

In order to overcome this flaw of the usual approach, M.I. Zelikin and V.F. Borisov proposed an explicit procedure to construct Lagrangian submanifolds filled by chattering trajectories. The main difficulty of this construction procedure is to analyze the regularity of the projections of the extremal lifts to the state space. 

\medskip

When using numerical methods to solve an optimal control problem, the occurrence of  chattering arcs may be an obstacle to convergence. Recall that there are two main types of numerical methods for solving optimal control problems: indirect methods and direct methods
(see, e.g., the survey paper \cite{Trelat}).

The direct methods (see \cite{Betts}) consist of discretizing the state and the control and thus of reducing the problem to a nonlinear optimization problem (nonlinear programming) with constraints. Using standard optimization routines, it is then possible to make converge the algorithm for the Fuller problem. Of course, the numerical solution which is obtained can only have a finite number of switchings, because in the approximation scheme, the chattering control is actually approximated with a piecewise constant control. 

The indirect methods consist of numerically solving a boundary value problem obtained by applying the Pontryagin Maximum Principle, by means of a shooting method. An indirect method is also called a shooting method (see \cite{StoerBulirsch}). In \cite{Bonnans}, it is shown that the presence of chattering arcs may imply ill-posedness (non-invertible Jacobian) of shooting methods for single-input problems. According to \cite{Zelikin2}, the difficulty is due to the numerical integration of the discontinuous Hamiltonian system (i.e., the right-hand side of the Hamiltonian system is discontinuous) because the chattering solutions worsen the approximation and error estimates during calculation for the standard numerical integration methods.

\subsection{Structure of the paper}
The paper is structured as follows.

In Sections \ref{sec_PMP} and \ref{sec_CompSingArcs}, the Pontryagin Maximum Principle (PMP) and an usual way to compute singular controls are recalled. Section \ref{sec_geomchatter} is devoted to recall some results of \cite{Zelikin1,Zelikin2}, explaining geometric features of the chattering phenomenon, based on a semi-canonical form of the Hamiltonian system along singular extremals of order two, with the objective of showing how these theoretical results can be applied in practice.

The non-singular (bang-bang) extremals of $\MTTP$ are analyzed in Section \ref{sec_ReguArcs}, and the Lie bracket configuration is given in \ref{sec_Lie}. We prove in Section \ref{sec_SingArcs} that the singular controls for $\MTTP$ are of intrinsic order two, which implies the existence of chattering arcs.  Based on the results of M.I. Zelikin and V.F. Borisov, we prove in Section \ref{sec_ChatArcs} that the optimal chattering arcs of the problem $\MTTP$ are locally optimal in $C^0$ topology. 

In Section \ref{sec_SingPred}, we provide, for the cases with $c=0$ and $c >0$ respectively, sufficient conditions on the initial values under which the optimal solutions do not contain any singular arc, and do not chatter. Numerical simulations, in Section \ref{sec_NumeChatPred}, illustrate these conditions.

Since chattering is not desirable in view of practical issues, we propose some sub-optimal strategies in Section \ref{sec_SuboSolu}, by approximating the chattering control with piecewise constant controls. Our numerical results provide evidence of the convergence of sub-optimal solutions to optimal solutions (but this convergence is not analyzed from the theoretical point of view in the present paper).

\section{Geometric analysis of chattering} 
Let $M$ be a smooth manifold of dimension $n$, and let $M_1$ be a submanifold of $M$. We consider on $M$ the minimal time control problem
\begin{equation} \label{pb_ocp}
\left\{ \begin{split}
	& \min t_f , \\
	& \dot{x}(t) = f_0(x(t))+u(t) f_1(x(t)),\quad |u(t)| \leq 1 , \\
	& x(0) = x_0,\ x(t_f) \in M_1 , \quad t_f\geq 0\ \textrm{free},
\end{split}\right.
\end{equation}
where $f_0$ and $f_1$ are two smooth vector fields on $M$. Since the system and the instantaneous cost are control-affine, and the control constraint is compact and convex, according to classical results (see, e.g., \cite{Cesari,Trelatbook}), there exists at least one optimal solution $(x(\cdot),u(\cdot))$, defined on $[0,t_f]$.

\subsection{Application of the Pontryagin maximum principle} \label{sec_PMP}
According to the Pontryagin maximum principle (in short, PMP, see \cite{PONTRYAGIN}), there must exist an absolutely continuous mapping $p(\cdot)$ defined on $[0,t_f]$ (called adjoint vector), such that $p(t)\in T^*_{x(t)}M$ for every $t\in[0,t_f]$, and a real number $p^0 \leq 0$, with $(p(\cdot),p^0)\neq 0$, such that
\begin{equation} \label{Hamiltonsys}
	\dot{x}(t) = \frac{\partial H}{\partial p}(x(t),p(t),p^0,u(t)),\quad
	\dot{p}(t) = -\frac{\partial H}{\partial x}(x(t),p(t),p^0,u(t)) ,
\end{equation}	
almost everywhere on $[0,t_f]$, where
\begin{equation} \label{Hamiltonianfun}
H(x,p,p^0,u) = \langle p, f_0(x)\rangle+u \langle p,f_1(x) \rangle + p^0 
\end{equation}
is the Hamiltonian of the optimal control problem \eqref{pb_ocp}, and (the final time $t_f$ being free)
\begin{equation} \label{Hamiltonsys2}
H(x(t),p(t),p^0,u(t)) = \max_{-1\leq v(t)\leq 1} H(x(t),p(t),p^0,v(t)),
\end{equation}
almost everywhere on $[0,t_f]$.
Moreover, we have the transversality condition
\begin{equation} \label{Hamiltonsys3}
	p(t_f) \perp T_{x(t_f)} M_1 ,
\end{equation}
where $T_{x(t_f)}M_1$ denotes the tangent space to $M_1$ at the point $x(t_f)$.

The quadruple $(x(\cdot),p(\cdot),p^0,u(\cdot))$ is called the extremal lift of $x(\cdot)$.
An extremal is said to be normal (resp., abnormal) if $p^0 < 0$ (resp., $p^0 = 0$). 

We define the functions
\begin{equation} \label{HamilFun}
h_0(x,p)=\langle p, f_0(x) \rangle  ,\quad
h_1(x,p)=\frac{\partial H}{\partial u}(x,p,p^0,u) = \langle p, f_1(x) \rangle .
\end{equation}
It follows from \eqref{Hamiltonsys2} that $u(t) = \mathrm{sign}(\varphi(t))$, whenever $\varphi(t) = h_1(x(t),p(t))\neq 0$.
For this reason, the function $\varphi$ is also called the switching function. 

\paragraph{Bang arcs.}
We say that the trajectory $x(\cdot)$ restricted to a sub-interval $I$ of $[0,t_f]$ is a \emph{bang arc} if $u(t)$ is constant along $I$, equal either to $+1$ or to $-1$. We say that the trajectory is bang-bang on $[0,t_f]$ if it is the concatenation of bang arcs.

\paragraph{Singular arcs.}
If $\varphi(t)=h_1(x(t),p(t))=0$ along a sub-interval $I$ of $[0,t_f]$, then the relation \eqref{Hamiltonsys2} does not allow to directly infer the control, and in that case we speak of a \emph{singular arc}, or of a \emph{singular extremal}.

\medskip

Equivalently, a singular control is defined as follows.
The end-point mapping $E: \R^n \times \R \times L^\infty(0,+\infty;\R) \to \R^n$ of the system is defined by $E(x_0,t_f,u)=x(x_0,t_f,u)$ where $t \mapsto x(x_0,t,u)$ is the trajectory solution of the control system, corresponding to the control $u$, such that $x(x_0,0,u)=x_0$ (the domain of definition is then the set of controls for which the trajetory is indeed globally defined on $[0,t_f]$).
A trajectory $x(\cdot)$, defined on $[0,t_f]$, with $x(0)=x_0$, associated with a control $u$, is said to be \emph{singular} if the differential $\partial_u E(x_0,t_f,u)$ is not of full rank. Accordingly, we speak of a \emph{singular control}.
It is well known that a trajectory $x(\cdot)$ is singular on $[0,t_f]$ if and only if it has an extremal lift $(x(\cdot),p(\cdot),p^0,u(\cdot))$, satisfying \eqref{Hamiltonsys} and $h_1(x(t),p(t))=0$ on $[0,t_f]$ (see \cite{BonnardChyba,Trelatbook}). This extremal lift is called a \emph{singular extremal}.

\subsection{Computation of singular arcs} \label{sec_CompSingArcs}
In order to compute a singular control, the usual method (see \cite{BonnardChyba}) consists of differentiating repeatedly the relation 
\begin{equation} \label{singularcond1}
\varphi(t)=h_1(x(t),p(t))=0
\end{equation}
with respect to time, until the control appears in a nontrivial way. Using the Hamiltonian system \eqref{Hamiltonsys}, such derivations are done thanks to Poisson brackets and Lie brackets. By differentiating \eqref{singularcond1} a first time (along the interval $I$), we obtain
\begin{equation} \label{singularcond2}
0 = \dot\varphi(t) = \{h_0,h_1\}(x(t),p(t)) = \langle p(t),[f_0,f_1](x(t))\rangle,
\end{equation}
which is a new constraint. Differentiating a second time, we obtain
\begin{equation*}
\begin{split}
0 = \ddot\varphi(t) 
& =\{h_0,\{h_0,h_1\}(x(t),p(t)) + u(t) \{h_1,\{h_0,h_1\}(x(t),p(t)) \\
&= \langle p(t),[f_0,[f_0,f_1](x(t))\rangle + u(t) \langle p(t),[f_1,[f_0,f_1](x(t))\rangle,
\end{split}
\end{equation*}
in which the control now appears in a nontrivial way provided that $\{h_1,\{h_0,h_1\}\}(x(t),p(t)) < 0$. The latter condition is known as \emph{strengthened Legendre-Clebsch condition}. Under this condition, we can indeed compute the singular control as
$$
u(t) = -\frac{\{ h_0,\{ h_0,h_1\}\}(x(t),p(t))}{\{ h_1,\{ h_0,h_1\}\}(x(t),p(t))}.
$$
It can be noted that the first derivative of $\varphi(\cdot)$ does not make appear the control. Hence, two derivations in time are at least necessary in order to make appear the control in a nontrivial way. Such controls are also said to be of \emph{minimal order}, and actually this property is generic (see \cite{Bon-Kup97,Chitour_Jean_Trelat}). Hereafter, due to the fact that optimal singular arcs have to appear with an even number of derivations, we also say that such singular arcs are of \emph{intrinsic order one}.

If $\{h_1,\{h_0,h_1\}\}(x(t),p(t))= 0$ identically on $I$, then the above computation does not suffice and we need to differentiate more.
In that case, we see that we have two additional constraints:
\begin{equation} \label{singularcond3}
\{h_0,\{h_0,h_1\}\}(x(t),p(t)) = \langle p(t),[f_0,[f_0,f_1]](x(t))\rangle = 0,
\end{equation}
and
\begin{equation*}
\{h_1,\{h_0,h_1\}\}(x(t),p(t)) = \langle p(t),[f_1,[f_0,f_1]](x(t)) = 0,
\end{equation*}
for every $t\in I$. 

Let us recall the concept of the order of a singular control. Roughly speaking, it is the first integer $m$ such that the control $u$ appears in a nontrivial way in the $(2m)^\textrm{th}$-derivative of the switching function $\varphi(\cdot)$ (see \cite{SCHATTLER,Zelikin2}). 

\begin{defi} \label{def_singularorder}
The singular control $u$ (along the sub-interval $I$) is said to be of \emph{local order} $k$ if the conditions 
\begin{equation*} 
\frac{\partial}{\partial u} \varphi^{(i)}(x(t),p(t)) =  0, \quad i=0,1,\cdots,2k-1, \quad
\frac{\partial}{\partial u} \varphi^{(2k)}(x(t),p(t)) \neq  0,
\end{equation*}
hold along the sub-interval $I$. 
If moreover the Lie brackets $[f_1,[\mathrm{ad}^{i}f_0.f_1]]$, $i=0,\cdots,2k-2$, are identically equal to zero (over the whole space), then the singular control $u$ is said to be of \emph{intrinsic order} $k$.
\end{defi}

We adopt the usual notations $\mathrm{ad}f_0.f_1 = [f_0,f_1]$ (resp., $\mathrm{ad}h_0.h_1 = \{h_0,h_1\}$) and $\mathrm{ad}^if_0.f_1 = [f_0,\mathrm{ad}^{i-1}f_0.f_1]$ (resp., $\mathrm{ad}^ih_0.h_1 = \{h_0,\mathrm{ad}^{i-1}h_0.h_1\}$).

\begin{rem}
If a singular control $u$ is of \emph{local} order two, then the conditions (along $I$)
$$ 
\frac{\partial}{\partial u} \varphi^{(2)}(t) =\langle p(t),[f_1,\mathrm{ad} f_0.f_1] (x(t))\rangle= 0,
$$
and 
$$ 
\frac{\partial}{\partial u} \varphi^{(3)}(t) =2 \langle p(t),[f_1,\mathrm{ad}^2 f_0.f_1] (x(t))\rangle + u(t) \langle p(t),[f_1,[f_1,\mathrm{ad} f_0.f_1]] (x(t))\rangle= 0,
$$ 
are additional constraints that must be satisfied along the singular arc. In contrast, if $u$ is of \emph{intrinsic} order two, then these conditions are trivially satisfied since $[f_1,\mathrm{ad} f_0.f_1]\equiv0$ and $[f_1,\mathrm{ad}^2 f_0.f_1]\equiv0$. In the present paper, we are in the situation of singular arcs of intrinsic order two, and we will then focus on that case.
\end{rem}

Actually, we did not consider, in the above definition, the case where the first nonzero derivative is of odd order. Indeed, such singular controls are actually never optimal, and hence we do not consider them in our analysis. This fact is due to the following well-known result, usually referred to as Kelley's condition for singular extremals of local order $k$ (see \cite{Kelley,Krener}):
\begin{quote}
\textit{
If a trajectory $x(\cdot)$, associated with a singular control $u(\cdot)$, is locally time-optimal on $[0,t_f]$ in $L^\infty$ topology, then the generalized Legendre-Clebsch condition
$$
(-1)^k \frac{\partial }{\partial u} \frac{d^{2k} h_1}{dt^{2k}}  \leq 0,
$$
is satisfied along the extremal. 
Recall that a trajectory $\bar{x}(\cdot)$ is said to be locally optimal in $L^\infty$ topology if, for every neighborhood $V$ of $u$ in $L^\infty([0,T+\epsilon],U)$, for every real number $\eta$ so that $| \eta | \leq \epsilon$, for every control $v \in V$ satisfying $E(x_0,T+\eta,v) = E(x_0,T,u)$ 
there holds $C(T+\eta,v) \geq C(T,u)$, where $E: \R^n \times \R \times L^\infty(0,+\infty;\R) \to \R^n$ is the end-point mapping defined by $E(x_0,t_f,u)=x(x_0,t_f,u)$.
}
\end{quote}
Therefore, the generalized Legendre-Clebsch condition for a singular control of \emph{local} order $2$ is
\begin{equation*}
\langle p(t),[f_1,\mathrm{ad}^3f_0.f_1] (x(t)) + [f_0,[f_0,[f_1,[f_0,f_1]]]] (x(t)) + [f_0,[f_1,\mathrm{ad}^2f_0.f_1]] (x(t))\rangle \leq 0,
\end{equation*}
and if the singular control of \emph{intrinsic} order $2$, then this condition takes the simpler form
$$
\langle p(t),[f_1,\mathrm{ad}^3f_0.f_1] (x(t))\rangle \leq 0.
$$

\medskip

Turning back to the previous computation, if the singular control is of intrinsic order two, then by differentiating $\ddot{\varphi}(t) = \{h_0,\{h_0,h_1\}\}(x(t),p(t))$, we get
\begin{equation*}
\begin{split}
0 = \varphi^{(3)}(t)
& =\{h_0,\mathrm{ad}^2h_0.h_1\}(x(t),p(t)) + u(t) \{h_1,\mathrm{ad}^2h_0.h_1\}(x(t),p(t)) \\
&= \langle p(t), [f_0,\mathrm{ad}^2f_0.f_1](x(t))\rangle + u(t) \langle p(t), [f_1,\mathrm{ad}^2f_0.f_1](x(t))\rangle,
\end{split}
\end{equation*}
which, using the fact that $[f_1,\mathrm{ad}^2f_0.f_1]\equiv 0$, leads to the additional constraint
\begin{equation} \label{singularcond4}
\{h_0,\mathrm{ad}^2h_0.h_1\}(x(t),p(t)) = \langle p(t), [f_0,\mathrm{ad}^2f_0.f_1](x(t))\rangle = 0.
\end{equation}
Differentiating again, we get
\begin{equation*}
\begin{split}
0 = \varphi^{(4)}(t)
& =\{h_0,\mathrm{ad}^3 h_0.h_1\}(x(t),p(t)) + u(t) \{h_1,\mathrm{ad}^3 h_0.h_1\}(x(t),p(t)) \\
&= \langle p(t), [f_0,\mathrm{ad}^3 f_0.f_1](x(t))\rangle + u(t) \langle p(t), [f_1,\mathrm{ad}^3 f_0.f_1](x(t))\rangle.
\end{split}
\end{equation*}
By definition, we have $\langle p(t), [f_1,\mathrm{ad}f_0^3.f_1](x(t))\rangle \neq 0$, and thus the singular control is 
\begin{equation} \label{singularcontrolorder2}
u(t) = -\frac{ \mathrm{ad}^4 h_0.h_1 (x(t),p(t))}{\{h_1, \mathrm{ad}^3 h_0.h_1 \}(x(t),p(t))},
\end{equation}
which is smooth. 

\begin{rem}
Along such a singular arc of intrinsic order two, the singular control is given by \eqref{singularcontrolorder2} and the constraints \eqref{singularcond1}, \eqref{singularcond2}, \eqref{singularcond3}, \eqref{singularcond4} must be satisfied along the arc.
\end{rem}

\medskip

In this paper, we are actually concerned with optimal singular trajectories of intrinsic order two, which cause the occurrence of a chattering phenomenon in our problem. Let us recall the following result (see \cite{Kelley,McDanell,Zelikin1}).

\begin{lem}\label{thm_bs}
We assume that the optimal solution $x(\cdot)$ of the optimal control problem \eqref{pb_ocp} involves a singular arc (on a sub-interval $I$) of intrinsic order two, for which the strengthened generalized Legendre-Clebsch condition
$$
\frac{\partial }{\partial u} \frac{d^{4} h_1(t)}{dt^{4}} = \{h_1,\mathrm{ad}^3h_0.h_1 \}(x(t),p(t))  < 0
$$
holds true along an extremal lift. If we have $\vert u(t)\vert < 1$ along the singular arc, then the singular arc cannot be matched directly with any bang arc. In particular, if $I$ is a proper subset of $[0,t_f]$, then the optimal solution chatters, in the sense that there is an infinite number of bang arcs accumulating at the junction with the singular arc.
\end{lem}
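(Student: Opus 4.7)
The plan is to argue by contradiction: assume the singular arc is matched directly to a bang arc at some junction time $t^{\ast}\in(0,t_f)$, and derive a sign conflict between the switching function $\varphi(t)=h_1(x(t),p(t))$ and the bang value $u=\pm 1$ in a one-sided neighborhood of $t^{\ast}$. This will contradict the maximization condition \eqref{Hamiltonsys2}.

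First, I would exploit the intrinsic order two hypothesis to track the regularity of $\varphi$ at the junction. Because the Lie brackets $[f_1,f_1]$, $[f_1,[f_0,f_1]]$ and $[f_1,\mathrm{ad}^2 f_0.f_1]$ vanish identically on $M$, the derivatives $\dot\varphi$, $\ddot\varphi$, $\varphi^{(3)}$ computed from the Hamiltonian system \eqref{Hamiltonsys} involve only iterated Poisson brackets of $h_0$ and $h_1$, with no $u$-dependent term; it is only at order four that the control enters through $u\cdot\{h_1,\mathrm{ad}^3 h_0.h_1\}$. Hence $\varphi,\dot\varphi,\ddot\varphi,\varphi^{(3)}$ are continuous across $t^{\ast}$, and since on the singular side they all vanish by \eqref{singularcond1}, \eqref{singularcond2}, \eqref{singularcond3}, \eqref{singularcond4}, they also vanish at $t^{\ast+}$.

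Next, I would compare $\varphi^{(4)}$ on the two sides. On the singular side $\varphi^{(4)}(t^{\ast-})=0$, which via \eqref{singularcontrolorder2} reads $\mathrm{ad}^4 h_0.h_1=-u_{\mathrm{sing}}(t^{\ast})\{h_1,\mathrm{ad}^3 h_0.h_1\}$ at the junction. On the bang side with $u\equiv\varepsilon\in\{-1,+1\}$, this gives
$$
\varphi^{(4)}(t^{\ast+})=\{h_1,\mathrm{ad}^3 h_0.h_1\}(x(t^{\ast}),p(t^{\ast}))\,\bigl(\varepsilon-u_{\mathrm{sing}}(t^{\ast})\bigr),
$$
which is nonzero because $|u_{\mathrm{sing}}(t^{\ast})|<1$ and by the strengthened generalized Legendre-Clebsch condition $\{h_1,\mathrm{ad}^3 h_0.h_1\}<0$. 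A Taylor expansion then yields $\varphi(t)\sim\frac{(t-t^{\ast})^4}{24}\varphi^{(4)}(t^{\ast+})$ just after $t^{\ast}$, with sign opposite to $\varepsilon$ (since $\varepsilon-u_{\mathrm{sing}}(t^{\ast})$ has the sign of $\varepsilon$ while the bracket is negative). This forces $u(t)=\mathrm{sign}\,\varphi(t)=-\varepsilon$ near $t^{\ast+}$, contradicting the assumed bang value $\varepsilon$. The same computation run on the left side rules out a bang arc preceding the singular arc.

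The main obstacle, as I see it, is going from the \emph{impossibility of direct junction} to the \emph{infinite accumulation of bang switchings}. The step above only excludes a single bang arc sitting adjacent to the singular arc; to obtain a genuine accumulation one must show that any extremal that leaves the singular surface must do so through a sequence of bang arcs whose lengths shrink geometrically to zero at $\partial I$. This requires the Zelikin-Borisov normal form near singular extremals of order two, which reduces the local extremal flow to a rescaled Fuller system, transports the Fuller chattering synthesis into the present setting, and shows that the only extremals asymptotic to the singular surface are chattering arcs. I would invoke this result from \cite{Zelikin1,Zelikin2} (already recalled in the paper) to close the argument rather than reconstruct the normal form here.
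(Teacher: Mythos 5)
Your core argument is precisely the paper's proof: arguing by contradiction at a bang-singular junction, you use the intrinsic order two hypothesis to get continuity and vanishing of $\varphi,\dot\varphi,\ddot\varphi,\varphi^{(3)}$ at the junction, then compare the one-sided fourth derivatives via the strengthened generalized Legendre-Clebsch condition and $|u_{\mathrm{sing}}|<1$, and conclude from a Taylor expansion that the sign of $\varphi$ on the bang side contradicts the maximization condition of the PMP; your sign bookkeeping is correct. The only point where you diverge is the final step: you invoke the Zelikin--Borisov normal form to pass from ``no direct junction'' to the accumulation of switchings, whereas the paper treats this as an immediate consequence of the first part, and indeed no heavy machinery is needed for the statement as given. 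If the switching times did not accumulate at an endpoint $t_1$ of the singular interval, then on some nontrivial interval adjacent to $t_1$ the control, being non-singular there and without switchings, would be constant equal to $+1$ or $-1$, i.e., a bang arc directly matched with the singular arc, which is exactly what your first part excludes. The Zelikin--Borisov theory is only required for the finer description of the chattering (e.g., geometric decay of the bang arc lengths and optimality of chattering extremals), which this lemma does not claim.
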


Although this result is known, we will provide a short proof of it when analyzing our spacecraft problem in Section \ref{sec_SingArcs}.

\begin{rem}
Note that the Fuller problem can be adapted to fit in the framework above, although this is not a minimum time problem. Actually, it suffices to add the objective as a third state variable $x_3$, evolving according to $\dot{x}_3 = x_1^2/2$, and then the Fuller problem can be interpreted, by uniqueness of the solution, as a minimum time problem with the vector fields $f_0(x)=(x_2,0,x_1^2/2)^\top$ and $f_1=(0,1,0)^\top$. The corresponding singular extremal is therefore given by $u =0$, $x_1=x_2=p_1=p_2=p^0=0$ and $p_3 <0$ being constant. The solutions of the Fuller problem are optimal abnormal extremals for this three-dimensional problem. Moreover, it is easy to see that $u=0$ is a singular control of intrinsic order two, along which the strengthened generalized Legendre-Clebsch condition is satisfied ($p_3 <0$). Then Lemma \ref{thm_bs} can be applied. 
\end{rem}

\subsection{Geometric analysis of the chattering phenomenon}\label{sec_geomchatter}
In this section, we recall some results on chattering solutions established in \cite{Zelikin1,Zelikin2}. Since these references are not always easy to read, our objective is also to provide a more pedagogical exposition of these results and to show how they can be used in practice.

Recall that a chattering solution is the optimal solution corresponding to the chattering control which switches an infinite number of times over a compact time interval.

\subsubsection{Semi-canonical form}
The semi-canonical form (see \cite{Kuppa,Zelikin1}) is a way of writing the Hamiltonian system \eqref{Hamiltonsys} in a neighborhood of its singular arcs, which will be used later to analyze the solutions near (in $C^0$ topology) singular arcs of intrinsic or local order two. The main idea is to design a variable change that leads to a form involving the switching function and its derivatives directly as variables. This makes the analysis of the extremals near the singular arcs more convenient. 

Let $x(\cdot)$ be an optimal trajectory of \eqref{pb_ocp} on $[0,t_f]$, and let $(x(\cdot),p(\cdot),p^0,u(\cdot))$ be an extremal lift (coming from the PMP). We assume that $x(\cdot)$ involves a singular arc of intrinsic second order two, along the sub-interval $I$, satisfying the strengthened generalized Legendre-Clebsch condition.

The Hamiltonian \eqref{Hamiltonianfun} can be rewritten as $H=h_0+u h_1+p^0$, with $h_0$ and $h_1$ defined by \eqref{HamilFun}. We assume that
\begin{equation} \label{funindpcond1}
\dim \mathrm{Span}\{f_1, \mathrm{ad}f_0.f_1, \mathrm{ad}^2f_0.f_1, \mathrm{ad}^3f_0.f_1\} = 4 .
\end{equation}
We define the new coordinates
\begin{equation} \label{XPtoZW}
z_1= h_1,\quad
z_2= h^{(1)}_1= \{ h_0,h_1\},\quad
z_3= h^{(2)}_1= \mathrm{ad}^2 h_0.h_1,\quad
z_4= h_1^{(3)}= \mathrm{ad}^3 h_0.h_1 ,
\end{equation}
and using that $[f_1,[f_0,f_1]] \equiv 0$ and that $\{h_1,\mathrm{ad}^3h_0.h_1\}<0$ along $I$, we have
\begin{equation*} 
\dot{z}_1=z_2 ,\quad
\dot{z}_2=z_3 ,\quad
\dot{z}_3=z_4 ,\quad
\dot{z}_4=\alpha(x,p)+  u \beta(x,p) ,
\end{equation*}
where $\alpha = \mathrm{ad}^4 h_0.h_1$ and $\beta=\{h_1,\mathrm{ad}^3h_0.h_1\} < 0$.

Note that $z_1$ is chosen as the switching function $\varphi(t)=h_1(x(t),p(t))$ and $z_i$ is chosen as the $(i-1)$-th derivative of the switching function. In fact, using that $[f_1,[f_0,f_1]] \equiv 0$ and using Jacobi's identity, we have 
$$
\{h_1,\{h_0,\{h_0,h_1\}\}\} = -\{h_0,\{\{h_0,h_1\},h_1\}\} - \{\{h_0,h_1\},\{h_1,h_0\}\} = \{h_0,\{h_1,\{h_0,h_1\}\}\} \equiv 0 .
$$ 
This, together with $\beta < 0$, indicates that the singular control considered here is of intrinsic order two and satisfies the generalized Legendre-Clebsch condition. By definition, we have $z_i=0$, $i=1,2,3,4$, along such a singular arc.

From \eqref{funindpcond1}, we infer that $z_1$, $z_2$, $z_3$, $z_4$ are functionally independent in the neighborhood of the extremal lift $(x(\cdot),p(\cdot))$, along $[0,t_f]$. We complement $z=(z_1,z_2,z_3,z_4)$ with $w=(w_1,\cdots,w_{2n-4}) \in \R^{2n-4}$ such that the Jacobi matrix of the mapping $ (x,p) \mapsto (z,w)$ is nondegenerate, i.e.,
\begin{equation*} 
	\det \left( \frac{D(z,w)}{D(x,p)} \right ) \neq  0 ,
\end{equation*}
along the extremal.
Since our point of view is local, we assume that $(x,p)$ and $(z,w)$ live in $\R^{2n}$.
The Hamiltonian system \eqref{Hamiltonsys} can be rewritten, locally along the extremal, as
\begin{equation} \label{Hamsys}
\dot{z}_1=z_2 ,\quad
\dot{z}_2=z_3 ,\quad
\dot{z}_3=z_4 ,\quad
\dot{z}_4=\alpha(z,w)+  u \beta(z,w),\quad
\dot{w}=F(z,w,u) ,
\end{equation}
and the extremal control is given by
\begin{equation*}
u(t)=\begin{cases}
1 & \textrm{if}\ z_1(t)>0 ,\\
-\alpha/\beta & \textrm{if}\ z_1(t)=0 , \\
-1 & \textrm{if}\ z_1(t)<0 .
\end{cases}
\end{equation*}
Accordingly, we define the \emph{singular surface} (smooth manifold consisting of singular extremals of second order) as
\begin{equation*} 
	S = \{(z,w) \mid (z_1,z_2,z_3,z_4)=(0,0,0,0)\},
\end{equation*}
and the \emph{switching surface} as 
\begin{equation*} 
	\Gamma = \{(z,w) \mid z_1=0\}.
\end{equation*}
If a trajectory $z(\cdot)$ is a solution of \eqref{Hamsys}, then a straightforward calculation yields that $z_{\lambda}=G_{\lambda}(z(t/\lambda))$ is also a solution of \eqref{Hamsys}, for any number $\lambda>0$, where
\begin{equation} \label{mappingG}
G_{\lambda} (z(\frac{t}{\lambda})) = \left( \lambda^4 z_1\left(\frac{t}{\lambda}\right),\lambda^3 z_2\left(\frac{t}{\lambda}\right),\lambda^2 z_3\left(\frac{t}{\lambda}\right),\lambda z_4\left(\frac{t}{\lambda}\right) \right).
\end{equation} 
This is an important property for the Fuller problem (self-similar solutions).

The system \eqref{Hamsys} is useful in order to analyze the qualitative behavior of solutions near the singular surface consisting of singular extremals of intrinsic order two. To include some Hamiltonian systems having singular arcs of local order two, we consider a small perturbation of the system \eqref{Hamsys} in the neighborhood of a given point $(0,w_0) \in S$, given by
\begin{equation} \label{small_perturbation}
\left\{\begin{array}{l}
        \d{\dot{z_1} = z_2 + f_1(z,w,u)},\\
        \d{\dot{z_2} = z_3 + f_2(z,w,u)},\\
        \d{\dot{z_3} = z_4 + f_3(z,w,u)},\\
        \d{\dot{z_4}= \alpha(w) + u \beta(w) + f_4(z,w,u)},\\
        \d{\dot{w}= F(z,w,u) },
\end{array}\right.
\end{equation}
with $f_i(z,w,u)=\mathrm{o}(z_{i+1})$, i.e.,
\begin{equation} \label{sp_cond}
\lim_{\lambda \rightarrow 0^+} \lambda^{-(5-i)} \vert f_i(G_{\lambda}(z(t/\lambda)),w,u) \vert < +\infty, \quad i=1,2,3,4.
\end{equation}
The system \eqref{small_perturbation}-\eqref{sp_cond} is called a \emph{semi-canonical form}.

\begin{rem}
The variables $(z,w)$ can be chosen differently from \eqref{XPtoZW} in order to get a simpler local system \eqref{small_perturbation}. This is why this form is called semi-canonical, and not canonical. Moreover, this change of variable is not unique.
\end{rem}


\subsubsection{Geometry of chattering extremals}
The first result concerns the existence of chattering solutions. In contrast to Lemma \ref{thm_bs}, this result can also be applied to the case of singular arcs of \emph{local} order two, and it describes the phase portrait of optimal extremals in the vicinity of a manifold of singular arcs of order two. 

Recall that the singular surface $S$ for the system \eqref{small_perturbation} is of codimension $4$. The surface $S$ satisfies four constraints $z_1=0$, $z_2=0$, $z_3=0$, $z_4=0$ corresponding respectively to null derivatives of the switching functions $\varphi^{(i)}$, $i=0,1,2,3$. Considering a point $(0,w_0) \in S$, if $\beta(w_0)<0$ and $| \alpha(w_0) | < -\beta(w_0)$, there exists a neighborhood of this point in which the singular extremals passing through it satisfy the generalized Legendre-Clebsch condition and the singular control $|u|=|-\alpha(w) / \beta(w)|<1$ is admissible. The following proposition indicates that, for any point in such a neighborhood, there exists a family of chattering extremals coming into this point, and there is another family of chattering extremals emanating from this point. Note that a family of chattering extremals is a one-parameter family, with the parameter $\lambda$ defined in \eqref{mappingG}.

\begin{prop}[Bundles with chattering fibers] \label{thm1}
Consider the system \eqref{small_perturbation}, in an open neighborhood of the point $(0,w_0)$. If $\beta(w_0)<0$ and $|\alpha(w_0)|< - \beta(w_0)$, then there exists an open neighborhood $\mathcal{O}$ of $w_0$ in $\R^{2n-4}$ such that, for any $w \in \mathcal{O}$, there are two one-parameter families of chattering extremals intersecting only at the point $(0,w)$. 

The extremals of the families fill two manifolds $\mathcal{N}_w^{+}$ and $\mathcal{N}_w^{-}$,  each of them being of dimension $2$ and homeomorphic to $ \R^{2}$, coming respectively into and out of the point $(0,w)$. The switching points of $\mathcal{N}_w^{\pm}$ fill two piecewise-smooth curves $\Gamma_w^{\pm}$. 

The union $\cup_{w \in \mathcal{O}} \mathcal{N}_w^{\pm}$ of all those submanifolds is endowed with the bundle structure with base $\mathcal{O}$ and two-dimensional piecewise smooth fibers filled by chattering extremals.
\end{prop}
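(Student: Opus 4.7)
The plan is to reduce this statement, via the scaling symmetry $G_\lambda$, to a perturbation of the Fuller problem and then to transfer the known phase portrait of chattering solutions of the Fuller model to the semi-canonical system \eqref{small_perturbation}. First I would freeze $w=w_0$ and set the remainders $f_1,\dots,f_4$ to zero. Under the hypotheses $\beta(w_0)<0$ and $|\alpha(w_0)|<-\beta(w_0)$, the singular feedback $u_s=-\alpha(w_0)/\beta(w_0)$ lies strictly inside $[-1,1]$, and after a simple affine shift in $u$ the $z$-subsystem becomes the standard Fuller Hamiltonian system. The analysis recalled in Section \ref{sec_chat} (see the synthesis around Figure \ref{Fuller}) then provides, at the singular point $z=0$, two one-parameter families of chattering extremals: one entering the origin in forward time and one leaving it in backward time, each self-similar under $G_\lambda$ and each forming a two-dimensional topological disk in the $z$-space, with switching points filling two piecewise-smooth curves $\Gamma^{\pm}$.

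Next I would restore the $w$-dependence but still with $f_i\equiv 0$. Since $\alpha(w)$ and $\beta(w)$ depend smoothly on $w$ and the admissibility condition $|\alpha|<-\beta$ is open, there is a neighborhood $\mathcal{O}$ of $w_0$ on which the frozen Fuller construction goes through pointwise. The fifth equation $\dot w = F(z,w,u)$ drives $w$ only through the slow subsystem; along an incoming chattering extremal the $z$-coordinates follow the self-similar Fuller pattern and the time of arrival at $S$ is finite, so integrating $\dot w$ backward from $(0,w)$ along each candidate chattering arc gives a well-defined leaf $\mathcal{N}_w^{\pm}$. Smooth dependence of the flow on initial conditions (away from the switching points, plus the explicit geometric match at switchings) yields the bundle structure with base $\mathcal{O}$ and two-dimensional piecewise-smooth fibers.

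To handle the full perturbed system \eqref{small_perturbation}, I would invoke the scaling decay condition \eqref{sp_cond}. In the rescaled coordinates $\tilde z=G_\lambda^{-1}(z)$, $\tilde t=\lambda t$, the unperturbed equations are invariant while each perturbation term $f_i$ is multiplied by $\lambda^{-(5-i)}f_i(G_\lambda(\tilde z),w,u)$, which tends to $0$ as $\lambda\to 0^+$ uniformly on compact sets in $\tilde z$. Consequently, on each switching interval of order $\lambda$-smaller size, the perturbed dynamics are arbitrarily close, in $C^0$-norm on the corresponding rescaled block, to the Fuller dynamics. I would then set up a contraction mapping on the Banach space of sequences of switching times $\{t_k\}$ converging geometrically to the singular junction, using the Poincar\'e map between consecutive switching surfaces as in the Zelikin--Borisov construction, and solve the fixed-point equation for each $w\in\mathcal{O}$ and each value of the scaling parameter $\lambda$. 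This produces the two one-parameter families $\mathcal{N}_w^{\pm}$ and their switching curves $\Gamma_w^{\pm}$.

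The main obstacle is controlling the perturbation uniformly across the infinite cascade of switchings accumulating at $(0,w)$: standard ODE perturbation estimates degrade because the interval lengths shrink geometrically, so one cannot simply apply Gronwall on a fixed compact interval. The right remedy, which is exactly the content of \cite{Zelikin1,Zelikin2}, is to exploit the self-similarity: the rescaling $G_\lambda$ maps one switching block to the next and the bound \eqref{sp_cond} is tailored so that the perturbation contracts under this rescaling, making the contraction constant of the Poincar\'e map strictly less than that of the Fuller problem. Verifying this contraction and the resulting piecewise-smooth (but not $C^1$) regularity of $\mathcal{N}_w^{\pm}$ at the accumulation point is the technically delicate step; everything else follows from smooth dependence on parameters and from the standard synthesis for Fuller's problem recalled in Section \ref{sec_chat}.
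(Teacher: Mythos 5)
Your overall route --- self-similarity under $G_\lambda$, Poincar\'e maps between consecutive switching surfaces, a contraction/fixed-point argument for the accumulating switchings, smooth dependence for the bundle structure, with the hard estimates deferred to \cite{Zelikin1,Zelikin2} --- is essentially the strategy of the paper, whose own proof is likewise only a three-step sketch referring to Zelikin--Borisov. But two steps of your reduction are inaccurate as written. First, when $\alpha(w_0)\neq 0$ no affine change of the control turns the frozen $z$-subsystem into the \emph{standard} Fuller Hamiltonian system: the two bang values of $\dot z_4$ are $\alpha(w_0)+\beta(w_0)$ and $\alpha(w_0)-\beta(w_0)$, which under $|\alpha(w_0)|<-\beta(w_0)$ have opposite signs but unequal magnitudes, so what you get is an \emph{asymmetric} Fuller-type system; the synthesis recalled in Section \ref{sec_chat} is for the symmetric case and cannot be invoked verbatim, although the self-similar Poincar\'e-map analysis (the paper's step 1) does cover the asymmetric case, and the hypothesis $|\alpha(w_0)|<-\beta(w_0)$ is exactly what it requires.

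Second, the intermediate step in which the leaf $\mathcal{N}_w^{\pm}$ is produced by ``integrating $\dot w=F(z,w,u)$ backward along each candidate chattering arc'' is not a proof: once $w$ varies, $\alpha(w(t))$ and $\beta(w(t))$ change along the arc, so the $z$-component of a genuine solution of the coupled system (even with $f_i\equiv 0$) is \emph{not} the frozen-$w$ chattering arc, and the fiber cannot be obtained by slaving $w$ to it. The paper instead works with the Poincar\'e return map $\Phi$ of the full $(z,w)$-system, shows that the points of $S$ are stable fixed points of $\Phi\circ\Phi$ by computing the eigenvalues of $d(\Phi\circ\Phi)(0,w_0)$, and applies the invariant manifold theorem to obtain a one-dimensional $\Phi\circ\Phi$-invariant contracting submanifold transversal to $S$; the smooth dependence theorem then yields the bundle structure, and the perturbed case is treated by showing $\Phi$ remains well defined and smooth near $\mathcal{N}_{w_0}^{+}$. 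Your final contraction-mapping setup on sequences of switching times could in principle absorb the $w$-drift (after rescaling it is of the same order as the perturbations controlled by \eqref{sp_cond}), but you should fold that coupling explicitly into the fixed-point argument rather than dispose of it in a separate ``slaved $w$'' step.
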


\begin{figure}[h]
\centering
      \includegraphics[scale = 0.7]{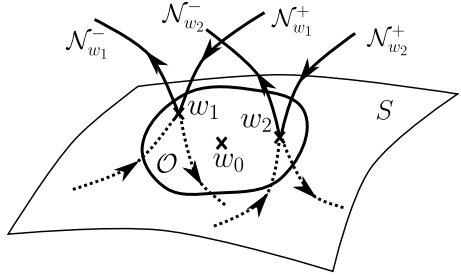}
        \caption{Phase portrait of optimal extremals near the singular surface.}
        \label{th1_portrait}
\end{figure}

Figure \ref{th1_portrait} illustrates Proposition \ref{thm1}. The extremals living in the submanifolds $\mathcal{N}_w^{+}$ and $\mathcal{N}_w^{-}$ are chattering. More precisely, the extremals in $\mathcal{N}_w^{+}$ reach $(0,w)$ (in finite time) with infinitely many switchings, and the extremals in $\mathcal{N}_w^{-}$ leave $(0,w)$ with infinitely many switchings. The submanifolds $\mathcal{N}_w^{\pm}$ can be seen as two-dimensional fibers.

\begin{proof}
The complete proof of Proposition \ref{thm1} is done in \cite{Zelikin1}. Let us however sketch the main steps. Assume that $z_2>0$.
\begin{enumerate}
\item Prove that there exist self-similar solutions (i.e., the one-parameter family of chattering solutions) for the unperturbed system \eqref{Hamsys} using the Poincar\'{e} mapping $\Phi$ of the switching surface to itself.
\item Prove that the points on $S$ are the stable points of $\Phi\circ\Phi$, by calculating the eigenvalues of $d(\Phi\circ\Phi)(0,w_0)$. Applying the invariant manifold theorem, there exists a one-dimensional $\Phi\circ\Phi$-invariant submanifold transversal to $S$ and passing through the point $(0,w_0)$. The restriction of $\Phi\circ\Phi$ to this submanifold is a contracting mapping. It follows the existence of a two-dimensional manifold $\mathcal{N}_{w_0}^+$ in the $(z,w)$-space, filled by chattering extremals entering into $(0,w_0)$. Moreover, the smooth dependence theorem leads to the bundle structure of $\cup_{w_0} \mathcal{N}_{w_0}^+$.
\item Prove that for the small perturbation system \eqref{small_perturbation}, the Poincar\'{e} mapping $\Phi$ is well defined and smooth at the points in the neighborhood of $\mathcal{N}_{w_0}^+$. Using similar techniques as in the first and second steps, prove that the solutions of the perturbed system have the same structure than that of the unperturbed system. 
\end{enumerate}
When $z_2<0$, another two-dimensional manifold $\mathcal{N}_{w_0}^-$ in $(z,w)$-space filled by chattering extremals that coming out of the point $(0,w_0)$ can be found and $\cup_{w_0} \mathcal{N}_{w_0}^-$ is also endowed with a bundle structure.
\end{proof}

The subbundles described in Proposition \ref{thm1} are given by
\begin{equation*}
	\Sigma^{\pm} = \cup_{w \in \mathcal{O}} \mathcal{N}_w^{\pm} ,
\end{equation*}
where the subbundle $\Sigma^{+}$ (resp., $\Sigma^{-}$) is filled by chattering arcs that come into (resp., come out of) the singular surface. Moreover, we denote the switching surfaces as ${\Gamma^{\pm} = \cup_{w \in \mathcal{O}} \Gamma_w^{\pm}}$.

Note that it suffices to consider only the subbundle $\Sigma^{+}$, since the properties of $\Sigma^{-}$ can be obtained similarly. We consider the canonical projection $\pi:\Sigma^{+} \to \mathcal{O}$ from the subbundle to the base.

\subsubsection{Optimality status}
We now raise the question of knowing whether these chattering extremals are optimal or not. Let us consider again the Fuller problem to give an intuitive idea. Using \eqref{XPtoZW}, we choose the new variables $z=(p_2, -p_1, -2 x_1, -2 x_2)$ and then clearly the singular surface coincides with the origin. According to Proposition \ref{thm1}, there are two integral submanifolds of dimension $2$ that are filled by chattering extremals coming into and out of the origin within finite time, with infinitely many switchings. 

We consider the canonical projection $\pi^\ast: (z,w) \rightarrow x$ from the $(z,w)$-space to the $x$-space (state space).
It is known that the extremals fill a Lagrangian submanifold in the $(z,w)$-space. Their projection on the state space are the trajectories, of which we would like to ensure their local optimality status. According to the conjugate point theory (see \cite{AGRACHEV,Bonnard}), it suffices to ensure that the projection $\pi^\ast$ be regular along the Lagrangian manifold (in other words, we require that its differential be surjective along that manifold).
Note that we can consider as well the projection from the $(x,p)$-space to the $x$-space, instead of $\pi^\ast$, because the coordinate change $(x,p) \mapsto (z,w)$ is bijective in the neighborhood of a point $(x,p) \in S$. Indeed, this coordinate only needs to be regular for providing the regularity of projection from $(x,p)$-space to $x$-space.

As illustrated on Figure \ref{fuller_ct}(a), the above regularity condition ensures that the trajectories in the $x$-space do not intersect each other before reaching the target point or submanifold, and thus ensures to avoid the loss of local optimality of the trajectories at the intersection point (i.e., the conjugate point). Figures \ref{fuller_ct}(b) and \ref{fuller_ct}(c) show the optimal synthesis of the chattering trajectories $\pi^\ast (\mathcal{N}_w^{+})$ and $\pi^\ast (\mathcal{N}_w^{-})$ for the Fuller problem, respectively. These chattering solutions do not intersect and they are locally optimal. 

\begin{figure}[h]
\centering
\includegraphics[scale = 0.85]{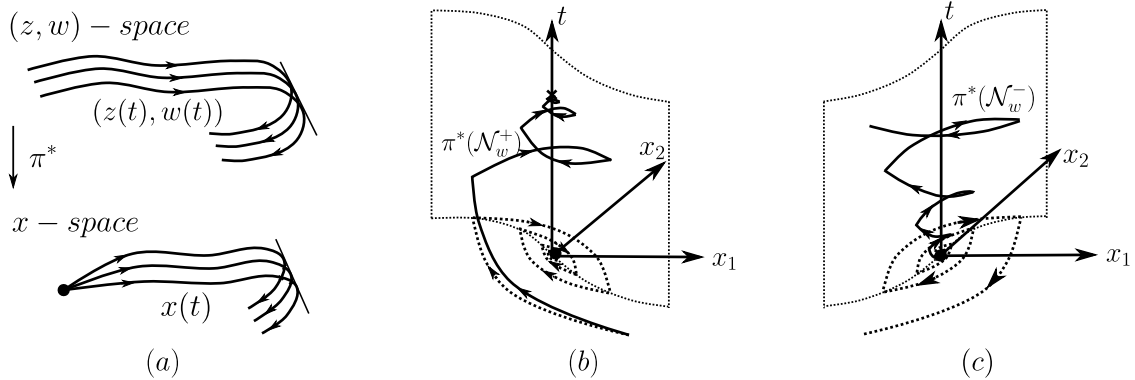}
\caption{ (a) Illustration of sufficient optimality condition; (b)-(c) Optimal synthesis of the Fuller problem.}
\label{fuller_ct}
\end{figure}

Let $M_1$ be a a target submanifold contained in the projection of the singular surface $\pi^\ast S$. For any point $x \in M_1$, we define its lift $(x,p(x))$ satisfying $(x,p(x))\in S$, $H(x,p(x))=0$ and $p(x)\,dx=0$ (transversality condition). The union $N$ of all such points $(x,p(x))$ must be transversal to the flow of the singular extremals in $S$. Thus, the singular extremals reaching the submanifold $N$ fill a submanifold $N^\ast$. In short, the submanifold $N$ is a lift of the target $M_1$ that intersects with the singular extremals. 

It is easy to see that the submanifold $N^\ast$ is Lagrangian. Hence the subbundle $\pi^{-1}(N^\ast)$ is Lagrangian as well. Therefore, according to the theory on Lagrangian manifolds and sufficient optimality conditions, it suffices to check the regularity of the projection $\pi^\ast$ restricted to $\pi^{-1}(N^\ast)$. 

The following proposition provides sufficient optimality conditions (see \cite{Zelikin2}) when the submanifolds $N$ and $N^\ast$ are of dimension $n-3$ and $n-2$ respectively.

\begin{prop}\label{prop_opti}
Consider the subbundle $\pi ^{-1} (N^{\ast})$ of the bundle $\Sigma^+$. Assume that the restriction of the projection $\pi^\ast$ on any smooth part of the bundle $\pi^{-1} (N^{\ast})$ is  regular and can be regularly extended to boundary points of the smooth part. Assume that the target manifold $M_1$ is connected. Then the projection of the solutions of the system \eqref{small_perturbation} filling $\pi ^{-1} (N^{\ast})$ are locally optimal in $C^0$ topology.
\end{prop}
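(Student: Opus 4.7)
The proof will follow the general scheme sketched in the quoted sufficient condition at the beginning of Section~\ref{sec_geomchatter}: one first shows that $\pi^{-1}(N^{\ast})$ is a piecewise smooth Lagrangian submanifold of the cotangent bundle, then uses the regularity of $\pi^{\ast}$ to construct a Lipschitz value function via a field of chattering extremals, and finally derives $C^{0}$-local optimality by a standard Hamilton--Jacobi verification argument.

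The plan is to first check in Step~1 that $N^{\ast}$ itself is a Lagrangian piece inside $S$. The lift $N$ of $M_1$ is defined by $p\,dx=0$ on $M_1$, the transversality condition $H(x,p)=0$ and $(x,p(x))\in S$, so $\dim N = \dim M_1 = n-3$ and $p\,dx$ vanishes on $N$, i.e.\ $N$ is isotropic. Because $N$ is assumed transversal to the singular flow on $S$, the singular extremals emanating from $N$ sweep out a smooth $(n-2)$-dimensional submanifold $N^{\ast}\subset S$. The singular dynamics in the semi-canonical form \eqref{small_perturbation} restricted to $\{z=0\}$ is Hamiltonian (with singular control $u=-\alpha/\beta$), hence preserves the canonical symplectic form, and the translate of an isotropic submanifold by a Hamiltonian flow is again isotropic. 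A dimension count then gives that $N^{\ast}$ is Lagrangian in the ambient symplectic space.

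Step~2 is the heart of the argument: show that $\pi^{-1}(N^{\ast})$ is a piecewise smooth Lagrangian submanifold of dimension $n$. By Proposition~\ref{thm1}, above each point $(0,w)\in N^{\ast}$ sits a two-dimensional fibre $\mathcal{N}_{w}^{+}$ whose smooth pieces are bang arcs of the Hamiltonian flows associated with $H|_{u=+1}$ and $H|_{u=-1}$. Each such flow preserves the canonical symplectic form, so transports Lagrangian submanifolds to Lagrangian submanifolds. At a switching point the trajectory crosses $\Gamma=\{z_{1}=0\}$; since $h_{1}=0$ there, the two bang Hamiltonians coincide on $\Gamma$, and a standard symplectic gluing lemma shows that the union of the two bang-flow Lagrangian pieces across $\Gamma$ is again Lagrangian. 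Applying this argument to each successive bang piece of every chattering fibre, and then taking the union over $w\in\mathcal{O}$, $\pi^{-1}(N^{\ast})$ acquires the structure of a piecewise smooth Lagrangian submanifold that matches $N^{\ast}$ at the singular junction.

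In Step~3 I would apply the hypothesis on the projection $\pi^{\ast}$. On each smooth piece of $\pi^{-1}(N^{\ast})$ the map $\pi^{\ast}$ is a local diffeomorphism onto its image in the state space, so the pullback of the Liouville form by its inverse is a closed $1$-form, hence locally the differential of a smooth function $S$ solving $H(x,dS(x))=0$. The boundary value of $S$ on $M_1$ is fixed (up to an additive constant, removed by connectedness of $M_1$) by the transversality condition, so the pieces $S$ glue continuously across the switching surfaces $\Gamma^{\pm}$ into a Lipschitz function on a $C^{0}$-neighborhood of the projected chattering trajectories. A classical verification argument with this value function then yields that any competing admissible trajectory lying in this neighborhood and joining the same endpoints takes at least as much time, which is precisely the $C^{0}$-local optimality asserted.

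The main obstacle will be the infinite accumulation of switchings at the singular junction in Step~2 and in the continuity of $S$ in Step~3. Any finite number of bang arcs can be handled by iterating the symplectic gluing lemma, but here one must control the piecewise smooth Lagrangian structure and the regularity of $\pi^{\ast}$ \emph{uniformly} along a sequence of shrinking bang pieces accumulating on $N^{\ast}$. This is exactly where the assumption that the regularity of $\pi^{\ast}$ on each smooth piece extends regularly to its boundary points is used: it rules out conjugate phenomena both at the individual switchings $\Gamma^{\pm}$ and at the limiting singular junction, and ensures that the locally defined potentials $S$ have a common Lipschitz extension up to the singular surface. Once this uniform control is in hand, the Hamilton--Jacobi verification is standard.
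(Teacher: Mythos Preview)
The paper does not actually prove Proposition~\ref{prop_opti}: it is stated as a result taken from Zelikin and Borisov (see the sentence introducing it, ``The following proposition provides sufficient optimality conditions (see \cite{Zelikin2})''), and no proof environment follows. What the paper does provide is the surrounding heuristic discussion (Lagrangian submanifolds filled by extremals, regular projection onto the state space, conjugate-point theory) and then Lemma~\ref{lem_opti} as a practical tool to check the regularity hypothesis. So there is no ``paper's own proof'' to compare your attempt against.

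That said, your outline is precisely the standard field-of-extremals / Hamilton--Jacobi verification scheme underlying the Zelikin--Borisov result, and it matches the informal explanation the paper gives before and after the proposition. One terminological point: in your Step~1 you conclude that ``a dimension count then gives that $N^{\ast}$ is Lagrangian in the ambient symplectic space,'' but $\dim N^{\ast}=n-2<n$, so in standard symplectic terminology $N^{\ast}$ is only \emph{isotropic}, not Lagrangian. The paper sidesteps this by adopting a nonstandard definition of ``Lagrangian'' (namely $\oint_\gamma p\,dx=0$ on every closed contour, which is really isotropy); with that convention your Step~1 is fine, and it is only the full bundle $\pi^{-1}(N^{\ast})$, of dimension $(n-2)+2=n$, that is Lagrangian in the usual sense. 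Your identification in Steps~2--3 of the genuine analytic difficulty --- uniform control of the piecewise Lagrangian structure and of the locally defined potentials across infinitely many accumulating switchings, handled via the assumed regular extension of $\pi^{\ast}$ to boundary points --- is exactly the point that distinguishes this result from the classical finite-bang case, and is where the work in \cite{Zelikin1,Zelikin2} lies.
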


The target submanifold has to be chosen adequately and must be of order $n-3$ in order to use this proposition. This condition on the dimension is used to take into account the two-dimensional fibers mentioned in Proposition \ref{thm1}.
\begin{figure}[h]
\centering
      \includegraphics[scale = 1.0]{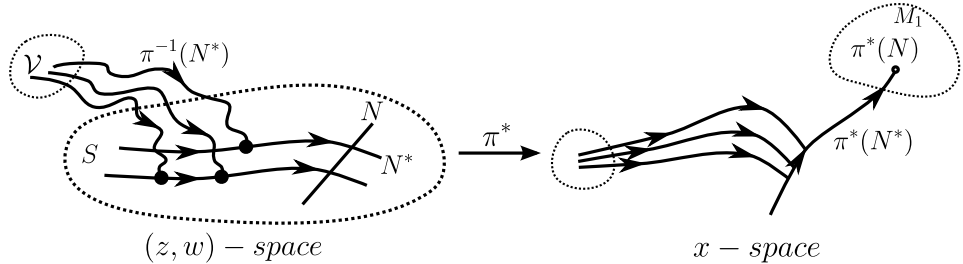}
        \caption{Illustration of Proposition \ref{prop_opti}.}
        \label{th2_illustration}
\end{figure}

As shown in Figure \ref{th2_illustration}, due to the endowed bundle structure, for every given initial point $(z_0,w_0)$ in the neighborhood of the singular surface $S$ in $(z,w)$-space, there is a neighborhood $\mathcal{V}$ of the point $(z_0,w_0)$ such that all extremals starting from the points inside $\mathcal{V}$ reach a point on $N^{\ast}$ in finite time with infinitely many switchings. Then, these extremals reach the target manifold $N$ along the singular extremals in $N^\ast$. If the projection $\pi^\ast$ is regular, then the projected trajectories in the $x$-space are locally optimal in $C^0$ topology.

\medskip

The condition of being a regular projection is the most difficult one to check. We set
\begin{equation*}
\Sigma^{\ast} = \pi ^{-1} (N^{\ast}), \quad
\Gamma^\ast = \Sigma^\ast \cap \Gamma^+, \quad
S_0 = S \cap \{H = 0\}.
\end{equation*}
In \cite{Zelikin2}, the authors provide the following sufficient condition for having a regular projection of $\Sigma^{\ast}$ into the $x$-space. 

\begin{lem} \label{lem_opti}
Let $\mathcal{L}$ be spanned by the vector $\partial / \partial z_3$ and by the vectors of the tangent plane to the switching surface $\Gamma^\ast$. 
Assume that the restriction of $d\pi^\ast$ to $\mathcal{L}$ is surjective. Then, the restriction of $\pi^\ast$ to $\Sigma^{\ast}$ is regular as well.
\end{lem}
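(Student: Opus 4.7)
The plan is to reduce the claimed regularity of $d\pi^{*}|_{T\Sigma^{*}}$ to the assumed surjectivity of $d\pi^{*}|_{\mathcal{L}}$ by showing that the two subspaces share the same image. First, I will set up the dimension count. Since $N^{*}$ has dimension $n-2$ and the chattering fibers $\mathcal{N}_{w}^{+}$ are two-dimensional by Proposition \ref{thm1}, the subbundle $\Sigma^{*}=\pi^{-1}(N^{*})$ has dimension $n$; the hypersurface $\Gamma^{*}=\Sigma^{*}\cap\{z_{1}=0\}$ has dimension $n-1$; and provided $\partial/\partial z_{3}$ is transverse to $T_{q}\Gamma^{*}$, the test subspace $\mathcal{L}$ has dimension $n$, matching that of $x$-space. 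Regularity then amounts to $d\pi^{*}|_{T_{q}\Sigma^{*}}$ being an isomorphism, i.e.\ to $T_{q}\Sigma^{*}\cap \ker d\pi^{*}=\{0\}$.

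At a smooth switching point $q\in \Gamma^{*}\setminus S_{0}$, away from the accumulation locus on the singular surface, the fiber $\mathcal{N}_{w}^{+}$ is locally a genuine smooth $2$-surface, and I will decompose $T_{q}\Sigma^{*}=T_{q}\Gamma^{*}\oplus \R\cdot \vec H$, where $\vec H=z_{2}\partial_{z_{1}}+z_{3}\partial_{z_{2}}+z_{4}\partial_{z_{3}}+(\alpha+u\beta)\partial_{z_{4}}+F\partial_{w}$ is the Hamiltonian vector field of the semi-canonical system \eqref{small_perturbation}. The vector $\vec H$ is tangent to $\Sigma^{*}$ because its flow is the chattering flow, which by construction of $\Sigma^{+}$ and of $N^{*}$ leaves $\Sigma^{*}$ invariant; it is transverse to $\Gamma^{*}$ because its $z_{1}$-component $z_{2}$ is nonzero at a generic switching point, the chattering trajectory crossing $\{z_{1}=0\}$ transversally.

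The heart of the argument is the identity $d\pi^{*}(T_{q}\Sigma^{*})=d\pi^{*}(\mathcal{L})$, which I will derive from the congruence
\[
d\pi^{*}(\vec H)\;\equiv\; z_{4}\, d\pi^{*}(\partial_{z_{3}}) \pmod{d\pi^{*}(T_{q}\Gamma^{*})}.
\]
The mechanism is that each of the other coordinate directions appearing in $\vec H$ is reabsorbed into $d\pi^{*}(T_{q}\Gamma^{*})$: the component $F\partial_{w}$ is balanced by the horizontal lifts of $TN^{*}\subset T\Gamma^{*}$ (the base directions of the bundle); $z_{3}\partial_{z_{2}}$ is absorbed by the tangent to the switching curve $\Gamma_{w}^{+}$ inside the fiber; and $z_{2}\partial_{z_{1}}$ together with $(\alpha+u\beta)\partial_{z_{4}}$ are incorporated through the Hamiltonian relations $\dot z_{1}=z_{2}$, $\dot z_{4}=\alpha+u\beta$, which tie these directions to combinations already tangent to $\Gamma^{*}$ along the smooth pieces. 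Since $z_{4}\neq 0$ away from $S_{0}$, the multiple $z_{4}\,d\pi^{*}(\partial_{z_{3}})$ is nonzero, giving $\R\cdot d\pi^{*}(\vec H)=\R\cdot d\pi^{*}(\partial_{z_{3}})$ modulo $d\pi^{*}(T_{q}\Gamma^{*})$, and the equality of images follows.

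Finally, to lift regularity from smooth strata to all of $\Sigma^{*}$, including the boundary points of smooth pieces and the accumulation of switchings toward $S_{0}$, I will invoke the hypothesis in Proposition \ref{prop_opti} that $\pi^{*}$ extends regularly to boundary points; together with the continuity of the identification above, this propagates surjectivity from the interior smooth points to the whole $\Sigma^{*}$. The main obstacle is the modular identity in the third paragraph: making the description of $T_{q}\Gamma^{*}$ in $(z,w)$-coordinates explicit requires unwinding the non-canonical change of variables $(x,p)\mapsto (z,w)$ together with the piecewise smooth bundle structure of $\Sigma^{+}$, and it is precisely here that the transversality $\partial/\partial z_{3}\notin T_{q}\Gamma^{*}$ becomes essential—without it, $\mathcal{L}$ would drop in dimension and the test condition would become vacuous.
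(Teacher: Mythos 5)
First, a point of comparison: the paper does not prove Lemma \ref{lem_opti} at all — it is quoted from Zelikin--Borisov \cite{Zelikin2} — so your attempt can only be measured against the argument in that reference, which is an asymptotic analysis of the tangent planes to $\Sigma^\ast$ near the singular surface based on the self-similar scaling \eqref{mappingG}. Your skeleton (dimension count $\dim\Sigma^\ast=n$, one-sided splitting $T_q\Sigma^\ast=T_q\Gamma^\ast\oplus\R\vec H$ at a switching point, reduction to comparing $d\pi^\ast(\vec H)$ with $d\pi^\ast(\partial/\partial z_3)$ modulo $d\pi^\ast(T_q\Gamma^\ast)$) is in the right spirit, but the central congruence $d\pi^\ast(\vec H)\equiv z_4\,d\pi^\ast(\partial_{z_3})$ mod $d\pi^\ast(T_q\Gamma^\ast)$ is asserted rather than proved, and it is not correct as an exact pointwise identity. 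Concretely: $\Gamma^\ast\subset\{z_1=0\}$, so $T_q\Gamma^\ast\subset\{dz_1=0\}$ and the component $z_2\,\partial_{z_1}$ of $\vec H$ (which is nonzero at a switching point — that is exactly the transversality you use) can never be ``absorbed'' into $T_q\Gamma^\ast$; it only disappears after projection because, for the specific change of variables \eqref{Var_Trans}, the $x$-variables depend only on $(z_3,z_4,w)$, i.e.\ $d\pi^\ast(\partial_{z_1})=d\pi^\ast(\partial_{z_2})=0$ — a fact you neither state nor use, and which is not part of the general semi-canonical setting. Likewise, the tangent to the switching curve $\Gamma_w^{+}$ is not $\partial_{z_4}$: by self-similarity it has the form $\bigl(0,O(\lambda^2),O(\lambda),c+O(\lambda)\bigr)$, so using it to cancel the $\partial_{z_4}$-component of $\vec H$ perturbs the $\partial_{z_3}$-coefficient by a term of the \emph{same} order $O(\lambda)$ as the $z_4$ you keep; and $F(z,w,\pm1)$ is not tangent to $N^\ast$ (the $w$-components of $T_q\Gamma^\ast$ only span $TN^\ast$ plus the fiber variation), since the bang control differs from the singular one by $O(1)$. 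The true statement is therefore asymptotic, valid as the switching point tends to the singular surface and only after the $G_\lambda$-rescaling, with error terms of exactly the order of the term you retain; establishing it requires the quantitative self-similar estimates of \cite{Zelikin1,Zelikin2}, which are absent from your argument.

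Second, even granting your congruence, the coefficient $z_4$ vanishes precisely on $S_0$, so your argument yields regularity only away from the accumulation locus of switchings — which is the easy region; the entire difficulty of the lemma is the behavior of the tangent planes as the smooth pieces shrink toward $S_0$. Your last paragraph handles this by invoking the hypothesis of Proposition \ref{prop_opti} that $\pi^\ast$ ``extends regularly to boundary points'', but that is not a hypothesis of Lemma \ref{lem_opti}: it is precisely the property that the lemma, via the condition on $\mathcal{L}$, is meant to secure, so the step is circular. What is actually needed is to show that the one-sided tangent planes of the smooth pieces of $\Sigma^\ast$ converge, as the base point approaches $S_0$, to the span of the limiting $T\Gamma^\ast$-directions (singular and nonsingular velocities and $TN$, cf.\ Remark \ref{rem_thm3}) together with $\partial/\partial z_3$ — the latter arising as the subdominant direction once the dominant $\partial_{z_4}$-components of the time and family-parameter directions are cancelled — and then to conclude regularity in a neighborhood of $S_0$ by lower semicontinuity of the rank of $d\pi^\ast$. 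Without that limit computation, the proof has a genuine gap at exactly the points the lemma is designed to treat.
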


\begin{rem} \label{rem_thm3}
Lemma \ref{lem_opti} indicates that $d\pi.\frac{\partial}{\partial z_3}$ should be transversal to the tangent plane to the switching surface of the chattering family generated by the submanifold $N$. Note that, at the points of the curve $N$, the tangent plane of the switching surface $\Gamma^\ast$ consists of three types of vectors: the nonsingular velocity vector, the singular velocity vector and the tangent vector to the curve $N$. 
\end{rem}

\section{Application to the planar tilting maneuver problem} 
In this section, we analyze the bang-bang, singular and chattering extremals of the problem $\MTTP$. We will see that, when the strategy involves a singular arc, then this singular arc is of intrinsic order two, and according to the previous section, this causes a chattering phenomenon. We will prove that chattering extremals are locally optimal in $C^0$ topology.

\subsection{Extremal equations}\label{sec_ReguArcs}
The Hamiltonian of the problem $\MTTP$ is of the form $H=h_0+ uh_1 + p^0$, where 
$h_0 = \langle p,f_0(x) \rangle$ and $h_1 = \langle p,f_1(x) \rangle=bp_4$, and the adjoint vector $p=(p_1,p_2,p_3,p_4)$ satisfies the adjoint equations
\begin{equation} \label{sys2}
\begin{cases}
\dot{p}_1 &= c(p_1 x_2 - 2 p_2 x_1 + p_3) ,\\
\dot{p}_2 &= c p_1 x_1,\\
\dot{p}_3 &= a (p_1 \sin x_3 - p_2 \cos x_3),\\
\dot{p}_4 &= - p_3.
\end{cases}
\end{equation}
Since $b>0$, we infer from the maximization condition of the PMP that $u(t) = \mathrm{sign} (p_4(t))$, provided that $\varphi(t)=bp_4(t)\neq 0$ (bang arcs).
The final condition $x(t_f)\in M_1$ yields the transversality condition
\begin{equation*} 
	p_1(t_f)\cos(\gamma_f)+p_2(t_f) \sin(\gamma_f) = 0.
\end{equation*}

\subsection{Lie bracket configuration of the system} \label{sec_Lie}
Before proceeding with the analysis of singular extremals, it is very useful to compute the Lie brackets of the vector fields $f_0$ and $f_1$ defined by \eqref{def_f0f1}. This is what we call the Lie bracket configuration of the control system \eqref{singleinputcontrolaffinesystem}.

\begin{lem}\label{Lieconfig}
We have
\begin{equation*}
\begin{split}
& f_0 = ( a \cos x_3  - c x_1 x_2 ) \frac{\partial}{\partial x_1} + (a \sin x_3 + c x_1^2- g_0 ) \frac{\partial}{\partial x_2} + (x_4 - c x_1) \frac{\partial}{\partial x_3} ,\\
& f_1 = b \frac{\partial}{\partial x_4} ,\quad [f_0,f_1] = -b \frac{\partial}{\partial x_3},\\
& [f_0,[f_0,f_1]] = -ab\sin x_3 \frac{\partial}{\partial x_1} + ab\cos x_3 \frac{\partial}{\partial x_2},\qquad [f_1,[f_0,f_1]] \equiv 0, \\
& \mathrm{ad}^3f_0.f_1 = -ab((x_4-2cx_1)\cos x_3+c x_2\sin x_3)\frac{\partial}{\partial x_1}-ab\sin x_3(x_4-3cx_1 )\frac{\partial}{\partial x_2} -abc\sin x_3 \frac{\partial}{\partial x_3}, \\
\end{split}
\end{equation*}
\begin{equation*}
\begin{split}
[f_1,[f_0,[f_0,f_1]] =& [f_0,[f_1,[f_0,f_1]] = [f_1,[f_1,[f_0,f_1]] =0 ,\\
\mathrm{ad}^4f_0.f_1 =& ab((-4cx_1x_4+cg_0 +x_4^2 +4c^2x_1^2-c^2x_2^2)\sin x_3-2ac+4ac \cos^2 x_3+(cx_1 \\
   & -2x_4)cx_2\cos x_3 )\frac{\partial}{\partial x_1} + ab(-c^2x_1x_2 \sin x_3+4ac \sin x_3\cos x_3+(-x_4^2+6cx_1x_4\\
   & -7c^2x_1^2)\cos x_3)\frac{\partial}{\partial x_2}+abc(3cx_1\cos x_3-2x_4\cos x_3-cx_2\sin x_3)\frac{\partial}{\partial x_3},\\
[f_1,\mathrm{ad}^3f_0.f_1] =& -ab^2\cos x_3 \frac{\partial}{\partial x_1} -ab^2\sin x_3 \frac{\partial}{\partial x_2}.\\
\end{split}
\end{equation*}
and
$$
\dim\mathrm{Span}(f_1,[f_0,f_1],[f_0,[f_0,[f_0,f_1]])=3
$$
\end{lem}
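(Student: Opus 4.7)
The statement collects several explicit Lie-bracket formulas for the vector fields $f_0$ and $f_1$ on $\R^4$. My plan is simply to compute everything by repeated application of the coordinate formula $[X,Y] = \sum_j\bigl(X(Y^j) - Y(X^j)\bigr)\,\partial_{x_j}$, organized so that two structural observations do most of the work for free. The first is that $f_1 = b\,\partial_{x_4}$ is a constant vector field, so $[f_1,Y] = b\,\partial_{x_4}Y$ for every smooth $Y$; in particular $[f_1,Y] = 0$ whenever $Y$ has no $x_4$-dependence. The second, which I would derive once and then exploit, is that $[f_0,f_1]$ is again a constant vector field.

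To establish the second observation I compute $[f_0,f_1] = -b\,\partial_{x_4}f_0 = -b\,\partial_{x_3}$, since the only component of $f_0$ depending on $x_4$ is $f_0^3 = x_4 - cx_1$. Once this is in hand, $[f_1,[f_0,f_1]] = 0$ is immediate as the bracket of two constant fields, and then $[f_0,[f_1,[f_0,f_1]]] = 0$ and $[f_1,[f_1,[f_0,f_1]]] = 0$ follow for free. Next I compute $[f_0,[f_0,f_1]] = b\,\partial_{x_3}f_0 = -ab\sin x_3\,\partial_{x_1} + ab\cos x_3\,\partial_{x_2}$, which depends only on $x_3$, so $[f_1,[f_0,[f_0,f_1]]] = 0$ by the first observation. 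All four claimed vanishing identities are thus disposed of with almost no arithmetic.

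The nontrivial computations are those of $\mathrm{ad}^3 f_0.f_1$ and $\mathrm{ad}^4 f_0.f_1$, obtained by two further applications of $\mathrm{ad}\,f_0$ via the coordinate formula; writing $Z = \mathrm{ad}^2 f_0.f_1$, the $f_0(Z^j)$ part uses $f_0^3 = x_4 - cx_1$ acting on the trigonometric coefficients of $Z$, and the $-Z(f_0^j)$ part uses the $\partial_{x_1}$ and $\partial_{x_2}$ derivatives of $f_0^1,f_0^2,f_0^3$. For $[f_1,\mathrm{ad}^3 f_0.f_1]$, the first observation reduces the bracket to $b\,\partial_{x_4}$ applied to the coefficients of $\mathrm{ad}^3 f_0.f_1$, which isolates the explicit $x_4$ terms and yields $-ab^2\cos x_3\,\partial_{x_1} - ab^2\sin x_3\,\partial_{x_2}$. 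Finally the rank claim is read off by inspection: $f_1$ lies in $\partial_{x_4}$ only, $[f_0,f_1]$ lies in $\partial_{x_3}$ only, and at generic points the $\partial_{x_1}$ and $\partial_{x_2}$ components of $\mathrm{ad}^3 f_0.f_1$ do not both vanish, so the three vectors are linearly independent and span a three-dimensional subspace.

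The main obstacle is purely bookkeeping, concentrated in $\mathrm{ad}^4 f_0.f_1$: combining the trigonometric summands and correctly tracking signs takes care, but there is no conceptual issue, and in particular no Jacobi-identity manipulation is needed thanks to the two constant-field observations above.
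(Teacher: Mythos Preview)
Your proposal is correct and is exactly the intended approach: the paper states this lemma without proof, treating it as a routine coordinate computation, and your organization via the two constant-field observations (that $f_1$ and $[f_0,f_1]$ are constant) is a clean way to carry it out and to get the vanishing brackets for free.
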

It follows from this lemma that the Poisson brackets $\{h_1,\{h_0,h_1\}\}$ and $\{h_1,\{h_0,\{h_0,h_1\}\}\}$ are identically equal to $0$. This is the main reason why we will have singular extremals of higher order, as shown in the next section.

\subsection{Singular extremals}\label{sec_SingArcs}
In this section, we compute all possible optimal singular extremals arcs. Later on, we are going to provide sufficient conditions on the initial conditions, under which the optimal strategy of the problem $\MTTP$ does not involve (optimal) singular arcs. Before that, let us first assume that singular arcs do exist, and let us establish some necessary conditions along them.

\begin{lem} \label{lem_singular}
Let $x(\cdot)$ be a singular arc, defined on the sub-interval $(t_1,t_2)$, and let $(x(\cdot),p(\cdot),p^0,u(\cdot))$ be an extremal lift. Then:
\begin{itemize}
\item along that singular extremal, we must have (omitting $t$ for readability)
\begin{equation} \label{switch_derives3}
\begin{split}
& p_1 \left( a -c x_1 x_2 \cos x_3  - (g_0 - c x_1^2) \sin x_3 \right) + p^0 \cos x_3 = 0 , \\
& p_2 \left( a -c x_1 x_2 \cos x_3  - (g_0 - c x_1^2) \sin x_3 \right) + p^0 \sin x_3 = 0, \\
& p_3 = p_4 = 0 ,
\end{split}
\end{equation} 
and
\begin{equation} \label{singularcontrol}
u  = \frac{c}{2b} \big( (-cx_2^2+2x_1x_4-3cx_1^2+g_0)\sin 2x_3+2cx_1x_2\cos 2x_3 +4a\cos x_3-4x_2x_4\cos^2 x_3 \big) ;
\end{equation}
\item $p^0 \neq  0$ (in other words, there is no abnormal singular extremal), and then we set $p^0=-1$;
\item the four constraints \eqref{switch_derives3} are functionally independent;
\item one has $\vert u(t)\vert < 1$, for almost every $t\in(t_1,t_2)$ (in other words, any singular arc is admissible);
\item $u$ is of intrinsic order two;
\item the strengthened generalized Legendre-Clebsch condition along the singular extremal reads
\begin{equation} \label{kelleyarea}
a -c x_1 x_2 \cos x_3  - (g_0 - c x_1^2) \sin x_3 > 0 .
\end{equation}
\end{itemize}
\end{lem}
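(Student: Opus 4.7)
The plan is to follow the standard procedure of Section \ref{sec_CompSingArcs}: starting from $\varphi(t) = h_1(x(t),p(t)) = bp_4(t) \equiv 0$ on $(t_1,t_2)$, I differentiate $\varphi$ repeatedly, reading off the required Poisson brackets from the Lie bracket configuration of Lemma \ref{Lieconfig}. The first two derivatives give $\{h_0,h_1\} = -bp_3$ and $\mathrm{ad}^2 h_0.h_1 = ab(p_2\cos x_3 - p_1 \sin x_3)$, so that on the singular arc one must have $p_3 = p_4 = 0$ and $p_2\cos x_3 = p_1\sin x_3$. The third derivative involves no control because $[f_1,[f_0,f_1]]\equiv 0$, and it only adds the constraint $\mathrm{ad}^3 h_0.h_1 = \langle p,\mathrm{ad}^3 f_0.f_1\rangle = 0$. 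The fourth derivative uses $[f_1,\mathrm{ad}^2 f_0.f_1]\equiv 0$ and $[f_1,\mathrm{ad}^3 f_0.f_1] = -ab^2(\cos x_3\,\partial/\partial x_1 + \sin x_3\,\partial/\partial x_2)$, making the control appear for the first time.

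To obtain the two equations of \eqref{switch_derives3} that involve $p^0$, I use the conservation law $H \equiv 0$, which holds since $t_f$ is free and $H$ is autonomous. Substituting $p_3 = p_4 = 0$ into $H = h_0 + p^0 = 0$ and combining with $p_2 \cos x_3 = p_1 \sin x_3$ (by multiplying the latter by suitable trigonometric factors) yields both claimed relations. The explicit formula \eqref{singularcontrol} then follows from solving $\mathrm{ad}^4 h_0.h_1 + u\{h_1,\mathrm{ad}^3 h_0.h_1\} = 0$, using the Lie brackets of Lemma \ref{Lieconfig} and the constraints on $p$ to eliminate the adjoint variables. Item (v), the intrinsic-order-two property, is immediate from the identities $[f_1,[f_0,f_1]]\equiv 0$ and $[f_1,\mathrm{ad}^2 f_0.f_1]\equiv 0$ provided by Lemma \ref{Lieconfig}.

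For the strengthened generalized Legendre-Clebsch condition (vi), I compute $\{h_1,\mathrm{ad}^3 h_0.h_1\} = -ab^2(p_1\cos x_3 + p_2\sin x_3)$; multiplying the first and second equations of \eqref{switch_derives3} by $\cos x_3$ and $\sin x_3$ respectively and adding produces $(p_1\cos x_3+p_2\sin x_3)\,K = -p^0$, where $K = a - cx_1x_2\cos x_3 - (g_0-cx_1^2)\sin x_3$. Hence $\{h_1,\mathrm{ad}^3 h_0.h_1\} = ab^2 p^0/K$, and after the normalization $p^0 = -1$ the condition $\{h_1,\mathrm{ad}^3 h_0.h_1\} < 0$ is equivalent to $K > 0$, which is \eqref{kelleyarea}. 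This same relation yields (ii): if $p^0 = 0$, then \eqref{switch_derives3} forces $p_1 K = p_2 K = 0$, so either $K=0$ (which would make the denominator of \eqref{singularcontrolorder2} vanish, incompatible with the intrinsic-order-two structure and with the strict Legendre-Clebsch inequality) or $p_1 = p_2 = 0$, which combined with $p_3=p_4=0$ contradicts $(p,p^0)\neq 0$. The functional independence (iii) reduces to linear independence of the four vector fields $f_1,\,[f_0,f_1],\,\mathrm{ad}^2 f_0.f_1,\,\mathrm{ad}^3 f_0.f_1$, which is visible at a glance from their explicit expressions in Lemma \ref{Lieconfig} since they span $\partial/\partial x_4$, $\partial/\partial x_3$, a nontrivial combination in $(\partial/\partial x_1,\partial/\partial x_2)$, and a generically transverse direction.

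The main obstacle I expect is item (iv), namely the inequality $|u(t)|<1$ along the singular arc. The formula \eqref{singularcontrol} shows $u = (c/b)\,\Psi(x)$ where $\Psi$ is polynomial of degree two in $(x_1,x_2,x_4)$ with bounded trigonometric coefficients in $x_3$. A term-by-term estimate using the natural state bounds $|x_1|,|x_2|\le v_m$, $|x_4|\le \omega_{\max}$, $|x_3|\le\theta_{\max}$ and the numerical values of Table \ref{sim_param} gives $|u|$ comfortably below $1$; the cleanest way to present admissibility is therefore to state it as an explicit a priori bound in terms of $a,b,c,v_m,\omega_{\max}$ and then verify that it is $<1$ in the physical parameter regime, rather than as a purely algebraic identity.
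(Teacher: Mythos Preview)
Your overall strategy---successively differentiating $\varphi = h_1$ using the Lie bracket configuration of Lemma~\ref{Lieconfig} and combining with $H\equiv 0$---is exactly the paper's approach, and your treatment of items (i), (iv), (v) and (vi) matches the paper closely (including the numerical estimate for admissibility).

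There is, however, a genuine gap in your argument for (ii). You correctly observe that if $p^0=0$ then \eqref{switch_derives3} forces $p_1K=p_2K=0$, so either $K\equiv 0$ on $(t_1,t_2)$ or $p_1=p_2=0$ somewhere (hence $p=0$, a contradiction). But your dismissal of the case $K\equiv 0$ is faulty: you claim that $K=0$ ``would make the denominator of \eqref{singularcontrolorder2} vanish,'' yet that denominator is $\{h_1,\mathrm{ad}^3h_0.h_1\}=-ab^2(p_1\cos x_3+p_2\sin x_3)$, and your own relation $(p_1\cos x_3+p_2\sin x_3)\,K=-p^0$ gives \emph{no} information about $p_1\cos x_3+p_2\sin x_3$ when both $K$ and $p^0$ vanish. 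In particular, $p_1\cos x_3+p_2\sin x_3$ can perfectly well be nonzero while $K=0$ and $p^0=0$, so neither the intrinsic-order-two property nor the (strict) Legendre--Clebsch inequality is violated, and the appeal to them is circular in any case since they are conclusions of the lemma. The paper closes this gap differently: since $K$ (there denoted $y$) is a function of the state $x$ alone, one differentiates $y\equiv 0$ twice along the trajectory, obtaining the additional state constraints $\dot y=0$ and $\ddot y=\alpha_c+u\beta_c=0$. Together with the singular-surface relation $-x_4+cx_1(2+\sin^2 x_3)-cx_2\sin x_3\cos x_3=0$ and the requirement that this control coincide with the singular control \eqref{singularcontrol}, one obtains more independent constraints than the dimension of the extremal space, so no abnormal singular arc can exist.

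A minor point on (iii): the four constraints whose functional independence is asserted are those \emph{listed in} \eqref{switch_derives3}, namely $p_1K+p^0\cos x_3=0$, $p_2K+p^0\sin x_3=0$, $p_3=0$, $p_4=0$; the condition $\mathrm{ad}^3h_0.h_1=0$ is not among them. Hence your reduction to linear independence of $f_1,[f_0,f_1],\mathrm{ad}^2f_0.f_1,\mathrm{ad}^3f_0.f_1$ is not quite the right computation. The paper instead checks directly that the Jacobian $\partial(y_1,y_2,p_3,p_4)/\partial(x,p)$ has rank~$4$ once $p^0\ne 0$ and $p_1,p_2\ne 0$ are known.
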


In particular, the last item of the lemma states that optimal singular arcs, if they exist, must live in the region of the state space $\R^4$ defined by \eqref{kelleyarea}. The third item of the lemma implies that the singular extremals of the problem are in a submanifold of codimension $4$, i.e., the singular surface of $\MTTP$ is of codimension $4$.

\begin{proof}
Along the interval $I=(t_1,t_2)$ on which the singular arc is defined, the switching function $\varphi(t)=h_1(x(t),p(t))=bp_4(t)$ must be identically equal to zero. 
Differentiating with respect to time, we get that $\{ h_0,h_1 \} = - bp_3 = 0$ along $I$. 

Differentiating again, we get $\{h_0,\{ h_0,h_1 \}\} + u \{h_1,\{h_0,h_1\}\} = 0$, and since the Poisson bracket $\{h_1,\{h_0,h_1\}\}$ is identically equal to $0$ (see Lemma \ref{Lieconfig}), we have $\{h_0,\{ h_0,h_1 \}\} = \mathrm{ad}^2 h_0.h_1 =  -ab ( p_1 \sin x_3 -  p_2 \cos x_3) = 0$ along $I$ (and the equation $\{h_1,\{h_0,h_1\}\}=0$ does not bring any further information).

Differentiating again, we get $\{h_0,\{h_0,\{ h_0,h_1 \}\}\} + u \{h_1,\{h_0,\{h_0,h_1\}\}\} = 0$, and there, again from Lemma \ref{Lieconfig}, the Poisson bracket $\{h_1,\{h_0,\{h_0,h_1\}\}\}$ is identically equal to $0$ (and thus brings no additional information). Hence 
$$
\mathrm{ad}^3h_0.h_1 
=  -ab \big(x_4 (p_1 \cos x_3 + p_2 \sin x_3)+cp_1x_2\sin x_3 -3cp_2x_1\sin x_3-2cp_1x_1\cos x_3 \big)
= 0,
$$
which gives a new constraint.

Finally, a last derivation yields $\mathrm{ad}^4h_0.h_1 + u \{h_1,\mathrm{ad}^3h_0.h_1\} = 0$ and since $\{ h_1, \mathrm{ad}^3 h_0.h_1 \} \neq 0$, we infer that
$$
u = -\frac{\mathrm{ad}^4 h_0.h_1}{\{ h_1, \mathrm{ad}^3 h_0.h_1 \}},
$$
along $I$, and \eqref{singularcontrol} is obtained. Here, we have
$$
\mathrm{ad}^4 h_0.h_1 = \langle p, \mathrm{ad}^4f_0.f_1(x) \rangle,
$$
and
$$
\{ h_1, \mathrm{ad}^3 h_0.h_1 \}   = \langle p, [f_1, \mathrm{ad}^3f_0.f_1](x) \rangle= - a b^2 (p_1 \cos x_3 + p_2 \sin x_3) .
$$
Hence, we have obtained the constraints
\begin{equation*}
\begin{split}
& p_3 = p_4 = 0,\qquad p_1 \sin x_3 -  p_2 \cos x_3 = 0,\\
& -x_4(p_1\cos x_3+p_2\sin x_3)+3cp_2x_1\sin x_3+cp_1(2x_1\cos x_3-x_2\sin x_3)= 0 .
\end{split}
\end{equation*}
They are functionally independent because $\dim\mathrm{Span}(f_1,[f_0,f_1],[f_0,[f_0,[f_0,f_1]])=3$ (see Lemma \ref{Lieconfig}). 
Moreover, using the fact that $H \equiv 0$ along an extremal, we infer the relations \eqref{switch_derives3}. Setting
\begin{equation*}
\begin{split}
y_1 &= p_1 \left( a -c x_1 x_2 \cos x_3  - (g_0 - c x_1^2) \sin x_3 \right) + p^0 \cos x_3, \\
y_2 &= p_1 \left( a -c x_1 x_2 \cos x_3  - (g_0 - c x_1^2) \sin x_3 \right) + p^0 \sin x_3,
\end{split}
\end{equation*}
we have
$
\mathrm{rank} \frac{\partial (y_1,y_2,p_3,p_4)}{\partial (x,p)}=4,
$
provided that $p^0 \neq 0$ and $p_1 \neq 0$, $p_2 \neq 0$. This implies that these four functions are functionally independent. 
If $p_1=0$ or $p_2=0$, then it is easy to see that $p_1=p_2=p^0=0$, which violates the PMP. Hence $p_1 \neq 0$ and $p_2 \neq 0$.
If $p^0$ were to be zero, then it would follow from $p_1 \neq 0$ and $p_2 \neq 0$ that 
$y=a -c x_1 x_2 \cos x_3  - (g_0 - c x_1^2) \sin x_3 \equiv 0$ along $I$. Differentiating, we get $\dot{y} \equiv 0$ and $\ddot{y}=\alpha_c +u_c \beta_c \equiv 0$. By substituting $p_1 \sin x_3 =  p_2 \cos x_3$ into $-x_4(p_1\cos x_3+p_2\sin x_3)+3cp_2x_1\sin x_3+cp_1(2x_1\cos x_3-x_2\sin x_3=0$, we  get
$$
y_3= -x_4+cx_1(2+\sin x_3^2)-cx_2\sin x_3 \cos x_3 =0.
$$
Then, setting
$
y_4=u_c - u_s=-\alpha_c/\beta_c-u, 
$
we check that $y=0$, $\dot{y}=0$, $y_3=0$ and $y_4=0$ are four functionally independent constraints on the $x$-space. Hence, the trajectory along $I$ becomes some points. To stay along $I$ on this abnormal extremal, we need in addition $u=0$ which is another independent constraint, and so the number of constraints has exceeded the dimension of the extremal $(x,p)$-space. Therefore $p^0 \neq  0$.

Using the numerical values of Table \ref{sim_param}, we have
\begin{equation} \label{sc_real}
	|u| \leq \frac{c}{2b} \big( 4a+6v_m\omega_{max}+cv_m^2 \big) \leq 0.3  ,
\end{equation}
and thus $\vert u\vert < 1$. Hence, for the problem $\MTTP$, we have, along any singular extremal arc,
\begin{equation*}
\frac{\partial}{\partial u} \frac{d^k}{dt^k} h_1 = 0, \quad k=0,1,2,3  ,\qquad
\frac{\partial}{\partial u} \frac{d^4}{dt^4} h_1 = \beta(x,p)= -a b^2 (p_1 \cos x_3 + p_2 \sin x_3) \neq 0 ,
\end{equation*}
and then, according to Definition \ref{def_singularorder}, the singular solutions (which are admissible from \eqref{sc_real}) are of intrinsic order two. 
The strengthened generalized Legendre-Clebsch condition for the problem $\MTTP$ is written here as $\beta(x,p) <0$, and hence, using \eqref{switch_derives3} and taking $p^0 = -1$, we obtain \eqref{kelleyarea}.
\end{proof}

\begin{cor}
For the problem $\MTTP$, any optimal singular arc cannot be connected with a nontrivial bang arc. We must then have chattering, in the following sense. Let $u$ be an optimal control, solution of $\MTTP$, and assume that $u$ is singular on the sub-interval $(t_1,t_2)\subset[0,t_f]$ and is non-singular elsewhere. If $t_1>0$ (resp., if $t_2<t_f$) then, for every $\varepsilon>0$, the control $u$ switchings an infinite number of times over the time interval $[t_1-\varepsilon,t_1]$ (resp., on $[t_2,t_2+\varepsilon]$).
\end{cor}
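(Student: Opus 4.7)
The plan is to derive the corollary by a direct sign analysis of the switching function at each endpoint of the singular interval, which simultaneously furnishes the promised short proof of Lemma~\ref{thm_bs} specialized to $\MTTP$.

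First I would collect, from Lemma~\ref{lem_singular}, the three facts that hold along any singular arc of $\MTTP$: the singular control is of intrinsic order two; it satisfies $|u(t)|<1$ almost everywhere (explicitly $|u|\leq 0.3$ by \eqref{sc_real}); and the strengthened generalized Legendre--Clebsch condition $\beta:=\{h_1,\mathrm{ad}^3 h_0.h_1\}<0$ holds. Setting $\alpha:=\mathrm{ad}^4 h_0.h_1$, the singular value is $u_s=-\alpha/\beta\in(-1,1)$, which gives $|\alpha|<-\beta$, hence both $\alpha+\beta<0$ and $\alpha-\beta>0$. Crucially, by Lemma~\ref{Lieconfig} one has $\{h_1,\{h_0,h_1\}\}\equiv\{h_1,\mathrm{ad}^2h_0.h_1\}\equiv 0$, so $\dot\varphi$, $\ddot\varphi$, $\varphi^{(3)}$ are continuous functions of $(x,p)$ alone (the control-jump contribution of any junction vanishes identically up to order three), while $\varphi=h_1=bp_4$ is continuous by continuity of the adjoint. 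Along the singular arc all four quantities $\varphi,\dot\varphi,\ddot\varphi,\varphi^{(3)}$ vanish, so by continuity they also vanish at any junction time approached from the singular side.

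Next, suppose for contradiction that at the right endpoint $t_2<t_f$ the optimal control equals a constant $\sigma\in\{-1,+1\}$ on an interval $(t_2,t_2+\delta)$. By the observations above, Taylor's formula on the bang side yields
\[
\varphi(t_2+h)=\tfrac{h^4}{24}\bigl(\alpha(x(t_2),p(t_2))+\sigma\,\beta(x(t_2),p(t_2))\bigr)+o(h^4),\qquad h\to 0^+.
\]
For $\sigma=+1$ this gives $\varphi(t_2+h)<0$ for $h$ small, contradicting the PMP maximization rule $u=\mathrm{sign}(\varphi)=+1$; for $\sigma=-1$ it gives $\varphi(t_2+h)>0$, again contradicting $\mathrm{sign}(\varphi)=-1$. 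Hence no such $\delta>0$ exists; since off the singular arc the (bang-bang) control is determined by $\mathrm{sign}(\varphi)$, the function $\varphi$ must change sign infinitely often in every right-neighborhood of $t_2$, which is exactly the chattering conclusion $[t_2,t_2+\varepsilon]$. The argument at the left endpoint $t_1>0$ is identical: since $(-h)^4=h^4$ and the odd-order derivatives all vanish, one obtains $\varphi(t_1-h)=\frac{h^4}{24}(\alpha+\sigma\beta)+o(h^4)$ and the same contradiction in both sign cases, producing chattering on $[t_1-\varepsilon,t_1]$.

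The main obstacle is not the chattering conclusion itself, which becomes immediate once the sign inequalities are in place, but the preliminary check that no control discontinuity contaminates $\dot\varphi,\ddot\varphi,\varphi^{(3)}$ across the junction; that step is precisely what the intrinsic-order-two structure, encoded in the vanishing Lie brackets of Lemma~\ref{Lieconfig}, provides for free. With that observation, the Taylor expansion and the two inequalities $\alpha+\beta<0$, $\alpha-\beta>0$ close the argument.
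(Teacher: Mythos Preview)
Your proof is correct and follows essentially the same approach as the paper's: both arguments use the continuity of $\varphi,\dot\varphi,\ddot\varphi,\varphi^{(3)}$ across the junction (guaranteed by intrinsic order two), then analyze the sign of $\varphi^{(4)}=\alpha+\sigma\beta$ on the bang side to produce a contradiction with the PMP maximization condition via a fourth-order Taylor expansion. Your version is slightly more explicit in treating both signs $\sigma=\pm1$ and in isolating the inequalities $\alpha+\beta<0$, $\alpha-\beta>0$ from the admissibility bound $|u_s|<1$, whereas the paper compares $\varphi^{(4)}(t_1^+)$ with $\varphi^{(4)}(t_1^-)$ directly; but the logical content is the same.
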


\begin{proof}
This result follows from Lemma \ref{thm_bs} and Lemma \ref{lem_singular}. However, the proof is simple and we provide hereafterin the argument. 

It suffices to prove that the existence of an extremal consisting of the concatenation of a singular arc of higher order with a non-singular arc violates the PMP. The reasoning goes by contradiction.
Assume that $t_1>0$ and that there exists $\varepsilon>0$ such that $u(t) = 1$ over $(t_1-\epsilon,t_1)$. By continuity along the singular arc, we have $\varphi(t_1)=\varphi^{(1)}(t_1)=\varphi^{(2)}(t_1)=\varphi^{(3)}(t_1)=0$, and it follows from the strengthened generalized Legendre-Clebsch condition $\beta (x,p)< 0$ that
\begin{multline*}
	0=\varphi^{(4)}(t_1^+) = \mathrm{ad}^4h_0.h_1(t_1)+\{h_1,\mathrm{ad}^3h_0.h_1\}(t_1) u(t_1^+)	\\
	  > \mathrm{ad}^4h_0.h_1(t_1)+\{h_1,\mathrm{ad}^3h_0.h_1\}(t_1) u(t_1^-)=\varphi^{(4)}(t_1^-),
\end{multline*}
and hence the switching function $t\mapsto \varphi(t)=h_1(x(t),p(t))$ has a local maximum at $t=t_1$ and thus is nonnegative over $(t_1-\epsilon,t_1)$, provided that $\varepsilon>0$ is small enough. It follows from the maximization condition of the PMP that $u(t_1)=-1$ over $(t_1-\epsilon,t_1)$. This contradicts the assumption.
\end{proof}

\subsection{Optimality status of chattering extremals}\label{sec_ChatArcs}
In this section, we analyze the optimality status of chattering extremals in the problem $\MTTP$.

\begin{lem} \label{lem_chattering1}
Assume that $x_3 \neq  \pi/2 + k\pi$, $k \in \mathbb{Z}$. The Hamiltonian system, consisting of \eqref{sys1} and \eqref{sys2}, can be written as a small perturbation system, in the form \eqref{Hamsys}, as
\begin{equation} \label{as_S}
\begin{cases}
\dot{z_1} &= z_2 ,\\
\dot{z_2} &= z_3 ,\\
\dot{z_3} &= z_4 + f_3(z,w,u),\\
\dot{z_4} &= \alpha_0(w) + u \beta_0(w) + f_4(z,w,u),\\
\dot{w} &= F(z,w,u) .\\
\end{cases}
\end{equation}
where $u \in [-1,1]$ and
\begin{equation} \label{smallpertcond}
	\lim_{\lambda \to +0} \frac{f_3(G_{\lambda}(z),w,u)}{\lambda^{(5-3)}}=0, \:\:
	\lim_{\lambda \to +0} \frac{f_4(G_{\lambda}(z),w,u)}{\lambda^{(5-4)}} < \infty .
\end{equation}
by choosing new variable $(z,w)$ as
\begin{equation} \label{var_change}
\begin{cases}
	z_1 = p_4,\:\: z_2 = p_4 ^{(1)},\:\: z_3 = p_4 ^{(2)},
	z_4 = p_4 ^{(3)} + a c \sin x_3 p_3,\\
	\d{w_1 = a ( p_1 \sin x_3 +  p_2 \cos x_3)} , \\
	\d{w_2 = a(x_4\cos x_3+cx_2\sin x_3-2cx_1\cos x_3)p_1}+a\sin x_3 (-x_4+3cx_1)p_2,\\
	\d{w_3 = p_2/p_1} ,\\
	\d{w_4 = x_1} .
\end{cases}
\end{equation}
in the neighborhood of the singular surface defined by $z=0$ in the $(z,w)$-space. In addition, the strengthened generalized Legendre-Clebsch condition for system (\ref{as_S}) yields 
\begin{equation} \label{kelleyarea2}
	w_1 w_3>0 .
\end{equation}
\end{lem}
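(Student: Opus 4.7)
The plan is to verify the lemma by direct computation in four steps: (i) show that the map $(x,p) \mapsto (z,w)$ defined by \eqref{var_change} is a local diffeomorphism near the singular surface, (ii) compute the transformed dynamics to identify $\alpha_0(w)$, $\beta_0(w)$, $f_3(z,w,u)$ and $f_4(z,w,u)$, (iii) check the scaling conditions \eqref{smallpertcond}, and (iv) translate the strengthened Legendre-Clebsch condition $\beta(x,p) < 0$ into the new coordinates.

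For step (i), the coordinate change has a layered structure: $z_1 = p_4$ and $z_2 = -p_3$ (via $\dot{p}_4 = -p_3$) give $(p_3, p_4)$ directly; the pair $(z_3, w_1)$ is a non-degenerate linear combination of $(p_1 \sin x_3, p_2 \cos x_3)$, and combined with $w_3 = p_2/p_1$ this determines $x_3$, $p_1$, $p_2$ provided $\cos x_3 \neq 0$; finally $w_4 = x_1$ and the pair $(w_2, z_4)$ inverts to give $(x_2, x_4)$ under the mild genericity $p_1 p_2 \neq 0$, which is guaranteed by Lemma \ref{lem_singular}. The hypothesis $x_3 \neq \pi/2 + k\pi$ is precisely what makes this inversion non-singular.

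For steps (ii) and (iii), the identities $\dot{z}_1 = z_2$ and $\dot{z}_2 = z_3$ are exact consequences of the adjoint equations \eqref{sys2}, so $f_1 = f_2 \equiv 0$. For $\dot{z}_3 = p_4^{(3)}$, the definition of $z_4$ was chosen precisely so that $p_4^{(3)} = z_4 - ac\sin x_3 \cdot p_3 = z_4 + ac\sin x_3 \cdot z_2$, yielding $f_3 = ac\sin x_3 \cdot z_2$, which under $G_\lambda$ is $O(\lambda^3) = o(\lambda^2)$. For $\dot{z}_4$, I would use the Lie bracket relations of Lemma \ref{Lieconfig} (in particular $\{h_1, \mathrm{ad}^j h_0.h_1\} \equiv 0$ for $j \leq 2$) to write $p_4^{(4)} = (\mathrm{ad}^4 h_0.h_1 + u\{h_1, \mathrm{ad}^3 h_0.h_1\})/b$, plug in the explicit bracket formulas from Lemma \ref{Lieconfig}, and add the derivative of the extra term $ac \sin x_3 \cdot p_3$ that enters $z_4$. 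The values at $z=0$ define $\alpha_0(w)$ and $\beta_0(w)$, and the remainder $f_4$ vanishes at $z=0$; a first-order Taylor expansion in $z$ with coefficients bounded in $w$ shows that $f_4$ is linear in $(z_1,z_2,z_3,z_4)$ to leading order, so its slowest-decaying component under $G_\lambda$ is the one in $z_4$, namely $O(\lambda)$.

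For step (iv), evaluating $\beta(x,p) = -ab(p_1\cos x_3 + p_2\sin x_3)$ on the singular surface, where $z_3 = 0$ forces $p_2 = p_1 \tan x_3$, gives $p_1\cos x_3 + p_2\sin x_3 = p_1/\cos x_3$. Using $w_1|_{z=0} = 2a p_1 \sin x_3$ and $w_3|_{z=0} = \tan x_3$, one obtains $\beta_0(w) = -bw_1(1+w_3^2)/(2w_3)$, and since $1+w_3^2 > 0$, the condition $\beta_0(w) < 0$ is equivalent to $w_1 w_3 > 0$. The main obstacle is the explicit algebraic verification of the scaling estimate for $f_4$: expanding both $\mathrm{ad}^4 h_0.h_1$ and $\{h_1,\mathrm{ad}^3 h_0.h_1\}$ in the $(z,w)$ coordinates and showing that the deviation from $\alpha_0(w) + u\beta_0(w)$ is at worst linear in $z$ requires carrying the explicit bracket formulas of Lemma \ref{Lieconfig} through the inverse of \eqref{var_change}, but once this expansion is in hand the remaining scaling checks are immediate.
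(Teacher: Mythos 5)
Your proposal is correct and follows essentially the same route as the paper: it checks local invertibility of the change of variables \eqref{var_change} (the paper does this via the rank of the Jacobian and the explicit inverse, you via the layered structure, with the same implicit non-degeneracy requirements near the singular surface), derives the semi-canonical form by differentiating $(z,w)$ with the Lie-bracket identities of Lemma \ref{Lieconfig}, identifies $\alpha_0,\beta_0$ by evaluation at $z=0$ and Taylor expansion to get the scaling estimates, and recovers exactly the paper's formula $\beta_0(w)=-b\,w_1(1+w_3^2)/(2w_3)$, whence \eqref{kelleyarea2}. The explicit identifications you give (e.g.\ $f_3=ac\sin x_3\, z_2$ and the $O(\lambda)$ bound for $f_4$) are consistent with the paper's computation, so no substantive gap remains.
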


\begin{proof}
We have proved that $p_1 \neq  0$ and $p_2 \neq  0$ along the singular arc. Then, from $x_3 \neq  \pi/2 + k\pi$, $k \in \mathbb{Z}$  we can prove that the Jacobi matrix of this variable change is of full rank by direct calculations, i.e.,
\begin{equation*} 
	\mathrm{rank} \big( \frac{D(z,w)}{D(x,p)} \big)=  8,
\end{equation*}
After some manipulations, we can express $(x,p)$ by the new variables $(z,w)$ chosen in (\ref{var_change}), as
\begin{equation}\label{Var_Trans}
\begin{split} 
	&\d{x_1=w_4}, \quad \d{x_4=3c w_4 - \frac{z_4+w_2}{w_3(w_1-z_3)}}, \quad p_3=-z_2, \quad p_4=z_1,\\
	&\d{x_2=\frac{w_1w_2-w_1z_4-w_2z_3+z_3z_4}{c(w_1-z_3)^2}+\frac{w_1w_2+w_1z_4+w_2z_3+z_3z_4-cw_3(w_4w_1^2-w_4z_3^2)}{cw_3^2(w_1-z_3)^2}} ,\\
	&x_3 = -2 \mathrm{arctan}\big(w_1+z_3 \pm(w_1^2w_3^2+w_1^2-2w_1w_3^2z_3+2w_1z_3+w_3^2z_3^2+z_3^2)/(w_3w_1-w_3z_3) \big),\\
	&\d{p_1=\mp \sqrt{w_1^2w_3^2+w_1^2-2w_1w_3^2z_3+2w_1z_3+w_3^2z_3^2+z_3^2}/ (2aw_3)},\\
	&\d{p_2=\mp \sqrt{w_1^2w_3^2+w_1^2-2w_1w_3^2z_3+2w_1z_3+w_3^2z_3^2+z_3^2}/ (2a)}.
\end{split}
\end{equation}
Although this variable transformation is not one to one in the whole $(x,p)$-space, it does not matter, because the semi-canonical system we use is a local system and so we just need to consider separately the domain $x_3 \in \mathcal{D}_1=(-\pi/2,\pi/2)$ and $x_3 \in \mathcal{D}_2=(-\pi,-\pi/2)\cup(\pi/2,\pi)$. 

The manifold of singular trajectories specified by $z=0$ can be written as
\begin{equation*}
	S = \{(x,p)|p_3=0,p_4=0,p_2=p_1\tan x_3,x_4=cx_1(2+\sin^2 x_3)-cx_2\sin x_3 \cos x_3 \} .
\end{equation*} 
Differentiating $(z,w)$ defined in \eqref{var_change} with respect to time with the help of \eqref{sys1} and \eqref{sys2}, we get the system in form,
\begin{equation} \label{syssemixp}
\begin{cases}
\dot{z_1} &= z_2,\\
\dot{z_2} &= z_3,\\
\dot{z_3} &= z_4+f_3(x,p),\\
\dot{z_4} &= A(x,p)+B(x,p) u,\\
\dot{w}   &= F(x,p,u),
\end{cases}
\end{equation}
with\begin{equation*}
	u=\begin{cases}
		1 & \textrm{if}\ z_1 >0 ,\\
		-1 &\textrm{if}\  z_1 <0 ,\\
		-A(x,p)/B(x,p) &\textrm{if}\  z_1 =0 ,
	\end{cases}
\end{equation*}
where
\begin{equation*}
	A(x,p) =  \{ h_0(x,p), z_4 \} ,\quad
	B(x,p) =  \{ h_1(x,p), z_4 \} 
	       = \beta(x,p)/b .
\end{equation*}
Note that here we have $u = -A/B = u_s$, where $u_s$ is given in \eqref{singularcontrol}. Hence we infer $|u|<1$. By substituting \eqref{Var_Trans} into \eqref{syssemixp}, we can obtain $f_3(z,w)$, $A(z,w)$, $B(z,w)$ and $F(z,w,u)$. 
Then we expand $A(z,w)$ and $B(z,w)$ in the vicinity of $S$ by
\begin{equation*}
	A(z,w)=A(0,w)+ \sum_{k=1}^{\infty} \frac{\partial^k A}{\partial z^k}(0,w) \frac{z^k}{k !},\:\:
	B(z,w)=B(0,w)+ \sum_{k=1}^{\infty} \frac{\partial^k B}{\partial z^k}(0,w) \frac{z^k}{k !}.
\end{equation*}
By taking $\alpha_0(w) = A(0,w)$, $\beta_0(w) = B(0,w)$, system (\ref{as_S}) is derived. We can see that system (\ref{as_S}) is a small perturbation of system (\ref{Hamsys}) since condition (\ref{sp_cond}) holds, i.e., condition \eqref{smallpertcond} holds.
Moreover, the strengthened generalized Legendre-Clebsch condition \eqref{kelleyarea2} is derived from 
\begin{equation*}
   \beta_0(w)=-b\frac{w_1(1+w_3^2)}{2w_3} < 0 ,
\end{equation*}
and \eqref{syssemixp} is transformed into a small perturbation system of form \eqref{as_S}.
\end{proof}

The functions $f_3(z,w)$, $\alpha_0(w)$ and $F(z,w,u)$ are different for $x_3 \in \mathcal{D}_1$ and $x_3 \in \mathcal{D}_2$. However, we will see next that this difference does not have any influence in the demonstration of the optimality result of chattering extremals.

\begin{cor} \label{cor_chattering1}
For the problem $\MTTP$, there exit two subbundles $\Sigma^{+}$ and $\Sigma^{-}$ having the singular surface $S$ as a base, and two fibers $\mathcal{N}^+$ and $\mathcal{N}^-$ of dimension two filled by chattering solutions.
\end{cor}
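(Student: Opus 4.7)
The plan is to apply Proposition \ref{thm1} directly to the semi-canonical form obtained in Lemma \ref{lem_chattering1}. The corollary is essentially a matter of checking hypotheses, so the proof will be short.

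First, I invoke Lemma \ref{lem_chattering1}, which shows that, away from the degenerate values $x_3 = \pi/2 + k\pi$, $k\in\Z$, the Hamiltonian system \eqref{sys1}--\eqref{sys2} can be written in the small-perturbation semi-canonical form \eqref{as_S} in a neighborhood of any point of the singular surface $S$, separately over each of the two open domains $\mathcal{D}_1 = (-\pi/2,\pi/2)$ and $\mathcal{D}_2 = (-\pi,-\pi/2)\cup(\pi/2,\pi)$. In particular, the perturbation conditions \eqref{smallpertcond} hold, so the system \eqref{as_S} is a genuine small perturbation of the normal form \eqref{Hamsys} in the sense required by Proposition \ref{thm1}.

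Next, I verify the two hypotheses of Proposition \ref{thm1} at an arbitrary point $(0,w_0)\in S$. The condition $\beta_0(w_0)<0$ is exactly the strengthened generalized Legendre-Clebsch condition \eqref{kelleyarea2}, which was shown in Lemma \ref{lem_chattering1} to read $w_1 w_3 > 0$ (equivalently, the explicit expression $\beta_0(w) = -b w_1(1+w_3^2)/(2w_3)$ is negative). The condition $|\alpha_0(w_0)| < -\beta_0(w_0)$ is nothing but the admissibility of the singular control $u_s = -\alpha_0/\beta_0$, that is $|u_s|<1$; this was established in Lemma \ref{lem_singular}, where the numerical estimate \eqref{sc_real} yields the stronger bound $|u_s|\leq 0.3$.

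With both hypotheses fulfilled, Proposition \ref{thm1} produces, in an open neighborhood $\mathcal{O}$ of each such $w_0\in S$, two one-parameter families of chattering extremals through every point of $\mathcal{O}$, filling two two-dimensional piecewise-smooth fibers $\mathcal{N}_w^{+}$ and $\mathcal{N}_w^{-}$ coming respectively into and out of $(0,w)$. Setting $\Sigma^{\pm} = \cup_{w\in\mathcal{O}}\mathcal{N}_w^{\pm}$ provides the desired subbundles with base $S$ and two-dimensional chattering fibers. The only subtle point is that the coordinate change \eqref{var_change} degenerates at $x_3 = \pi/2+k\pi$ and that the functions $f_3$, $\alpha_0$, $F$ differ on $\mathcal{D}_1$ and $\mathcal{D}_2$; however, since the result is local over each base point of $S$ and the hypotheses of Proposition \ref{thm1} are checked uniformly on each domain, this causes no obstruction to the construction, which is performed separately on each chart. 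This completes the proof.
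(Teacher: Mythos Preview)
Your proof is correct and follows the same approach as the paper, which simply states that it suffices to apply Lemma \ref{lem_chattering1} and Proposition \ref{thm1}. You have merely made explicit the hypothesis-checking ($\beta_0(w_0)<0$ from \eqref{kelleyarea2} and $|\alpha_0(w_0)|<-\beta_0(w_0)$ from \eqref{sc_real}) that the paper leaves implicit.
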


\begin{proof}
It suffices to apply Lemma \ref{lem_chattering1} and Proposition \ref{thm1}.
\end{proof}

We define $S_0 = S \cap \{ H\equiv 0 \}$. Let us consider an optimal solution $x(\cdot)$ of $\MTTP$, and let us assume that $x(\cdot)$ contains a singular arc defined on $(t_1,t_2)$. Let
\begin{equation*} 
	M_1^\ast = \{ x_2 = \Psi_1(x_1) \} \cap \pi^\ast(S_0) , 
\end{equation*}
be the submanifold where the extremals come into and out of the image of the singular surface $\pi^\ast(S_0)$, as shown in Figure \ref{targets}. 

\begin{figure}[h]
\centering
	 \includegraphics[scale = 1]{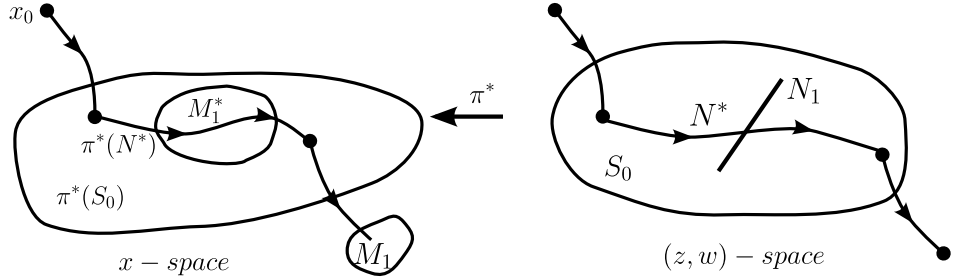}
	\caption{Illustration of $M^\ast_1$.}
	\label{targets}
\end{figure}

In the sequel, we want to analyze the optimality status of the chattering solutions with the ``target'' submanifold $M^\ast_1$. The optimality status of the chattering solutions starting from the submanifold $M^\ast_1$ can be analyzed similarly by considering the subbundle $\Sigma^-$.

We denote by $N_1$ the lift of $M^\ast_1$ in $(x,p)$-space by associating $x \in M^\ast_1$ with the point $(x,p(x))$ that belongs to $S_0$ and satisfies the transversality condition $p_1 = -p_2 \Psi_1^{\prime}(x_1) $ (following from \eqref{Hamiltonsys3}).

\begin{lem} \label{lem_taget}
The submanifold $N_1$ is Lagrangian submanifold of $\R^8$ of codimension 7. Moreover, the function $\Psi_1(\cdot)$ can be chosen such that the submanifold $N_1$ is transversal to the velocity vector of the singular extremals in $S$.
\end{lem}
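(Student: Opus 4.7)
My plan is to split the statement into three pieces: a dimension count that establishes codimension seven, a triviality observation for the Lagrangian property, and an explicit algebraic check that reduces the transversality of $N_1$ to the singular flow to the Legendre-Clebsch inequality \eqref{kelleyarea}. For the dimension count, I would start from Lemma \ref{lem_singular}: on the singular surface $S$ one has $p_3=p_4=0$ and $p_2 = p_1 \tan x_3$, and on $S_0 = S\cap\{H\equiv 0\}$ the remaining component $p_1$ is a smooth function of $x$. Thus $\pi^\ast(S_0)$ is the three-dimensional submanifold of $\R^4$ cut out by $x_4 = cx_1(2+\sin^2 x_3) - c x_2\sin x_3\cos x_3$, and every $x\in\pi^\ast(S_0)$ admits a unique lift $(x,p(x))\in S_0$. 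Intersecting with $\{x_2 = \Psi_1(x_1)\}$ and imposing the PMP transversality $p_1+p_2\Psi_1'(x_1)=0$, which combined with $p_2=p_1\tan x_3$ reduces to the scalar relation $1+\Psi_1'(x_1)\tan x_3=0$, produces three scalar equations on $\R^4$. A direct Jacobian computation shows that these three relations are functionally independent, so that $M_1^\ast$ is a one-dimensional curve, and its unique lift $N_1$ in $S_0$ is a one-dimensional submanifold of $\R^8$, i.e., of codimension seven.

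The Lagrangian claim is then a direct consequence of $\dim N_1 = 1$: the canonical two-form $dp\wedge dx$ pulls back to zero on any one-dimensional manifold, so $\oint_\gamma p\,dx = 0$ for every piecewise smooth closed contour $\gamma$ on $N_1$ (such a contour is retraced in opposite directions on a one-dimensional manifold, and the integrals cancel). This is exactly the Lagrangian condition recalled in Section \ref{sec_geomchatter}.

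For the transversality assertion, I would parametrize $M_1^\ast$ locally by $x_1$ and compare the tangent to $N_1$ with the singular velocity, looking first at their projections onto the $(x_1,x_2)$-plane. The projected tangent is $(1,\Psi_1'(x_1))$ and the projected singular velocity $f_0(x)+u_s f_1(x)$ is $(a\cos x_3 - cx_1 x_2,\, a\sin x_3 + cx_1^2 - g_0)$. These two vectors are parallel iff $(a\sin x_3 + cx_1^2 - g_0) - \Psi_1'(x_1)(a\cos x_3 - cx_1 x_2) = 0$; substituting $\Psi_1'(x_1) = -\cot x_3$ and multiplying through by $\sin x_3$, this condition becomes
$$
a - cx_1 x_2 \cos x_3 - (g_0 - cx_1^2)\sin x_3 = 0,
$$
which directly contradicts the strengthened Legendre-Clebsch inequality \eqref{kelleyarea}. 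Hence on the region where singular arcs are admissible the two projected vectors are never parallel, so the tangent to $N_1$ in $\R^8$ cannot be proportional to the singular velocity; any smooth $\Psi_1$ satisfying $\Psi_1' = -\cot x_3$ at a prescribed reference point of $\pi^\ast(S_0)$ therefore yields the required transversality, obtained by local integration.

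The main obstacle I anticipate is the algebraic identification in the last step: recognizing the parallelism condition between the $(x_1,x_2)$-projections as precisely the negation of \eqref{kelleyarea} requires some care with the implicit dependence of $x_3$ on $x_1$ inherited from the transversality constraint, and a careful use of $\sin^2 x_3 + \cos^2 x_3 = 1$ to collapse the two $a$-terms into a single $a$. The remaining parts reduce either to a routine Jacobian computation for functional independence or to the trivial one-dimensional nature of $N_1$ for the Lagrangian property.
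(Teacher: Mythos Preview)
Your proposal is correct and largely parallel to the paper's argument, but with two genuine differences worth noting.

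For the Lagrangian property, the paper explicitly computes the $x$-component $v_1$ of the tangent to $N_1$ and checks that the Liouville form $p\,dx = p_1\,dx_1 + p_2\,dx_2$ (recall $p_3=p_4=0$) annihilates it; this amounts to verifying that the imposed transversality relation $p_1 + p_2\Psi_1'(x_1)=0$ is exactly the condition $p\,dx|_{TN_1}=0$. Your argument instead invokes the trivial vanishing of any $2$-form on a one-dimensional submanifold. Both are valid under the paper's (isotropy-type) definition of ``Lagrangian''; the paper's version is slightly more informative because it makes explicit that $p\,dx$ vanishes pointwise on $N_1$, which is the property actually used afterwards to propagate the Lagrangian condition to $N^\ast$ via the Hamiltonian flow.

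For the transversality assertion, you go further than the paper. The paper simply writes down the singular velocity $v_2$ and records the non-proportionality condition $\Psi_1' \neq (a\sin x_3 + cx_1^2 - g_0)/(a\cos x_3 - cx_1 x_2)$ as a requirement on $\Psi_1$. Your computation shows that, after substituting the constraint $\Psi_1'(x_1) = -\cot x_3$ that already defines $N_1$, this non-proportionality is \emph{automatic}: the $(x_1,x_2)$-projections of $v_1$ and $v_2$ are parallel precisely when $a - cx_1x_2\cos x_3 - (g_0 - cx_1^2)\sin x_3 = 0$, which is forbidden by the strengthened Legendre--Clebsch condition \eqref{kelleyarea}. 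This is a sharper conclusion than the paper states, and the algebra you outline (multiply by $\sin x_3$, use $\sin^2+\cos^2=1$) is exactly right.
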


\begin{proof}
From the definition of $N_1$ and \ref{lem_singular}, we infer that that $(x,p(x))$ satisfies
\begin{equation*} 
\begin{split}
& p_1 = \frac{\cos x_3}{a -c x_1 x_2 \cos x_3  - (g_0 + c x_1^2) \sin x_3}, \\
& p_2 = \frac{\sin x_3}{a -c x_1 x_2 \cos x_3  - (g_0 + c x_1^2) \sin x_3} , \\
& p_3 = 0, \:\: p_4 = 0,\\
& x_2 - \Psi_1(x_1) = 0,\\
& - x_4 + cx_1(1+\sin^2 x_3) - cx_2 \sin x_3 \cos x_3=0,\\
& \Psi_1^{\prime}(x_1) \tan x_3+1=0.	
\end{split}
\end{equation*}
Then, the $x$-component of the tangent vector to $N_1$ can be written as
\begin{equation*} 
	v_1 = \left( 1,\frac{\partial x_2}{\partial x_1},\frac{\partial x_3}{\partial x_1},\frac{\partial x_4}{\partial x_1}\right)^\top ,
\end{equation*}
where
\begin{equation*} 
\begin{split}
	\frac{\partial x_2}{\partial x_1} &= \Psi_1^{\prime}(x_1),\quad
	\frac{\partial x_3}{\partial x_1} = -\frac{\Psi_1^{\prime \prime}(x_1)}
	                                       {\Psi_1^{\prime}(x_1)} \sin x_3 \cos x_3,\\
	\frac{\partial x_4}{\partial x_1} &= c(2+\sin^2 x_3)
	                                   +\frac{c}{4}\frac{\Psi_1^{\prime \prime}(x_1)}{\Psi_1^{\prime}(x_1)}
	                                   \big( 2x_1(2+\sin 2x_3) \sin 2x_3-x_2 \sin 4x_3 \big).
\end{split}
\end{equation*}
Therefore, the $1$-form $\bar{\omega} = p dx = p_1 dx_1 + p_2 dx_2 + p_3 dx_3 + p_4 dx_4$ vanishes on every tangent vector to the submanifold $N_1$. Thus $N_1$ is of codimension $7$ and it is Lagrangian.

Moreover, the $x$-component of the velocity on the singular trajectories is
\begin{equation*} 
	v_2 = (\dot{x}_1,\dot{x}_2,\dot{x}_3,\dot{x}_4)=( a \cos x_3  - c x_1 x_2 , a \sin x_3 + c x_1^2- g_0, x_4 - c x_1, bu_s)^\top ,
\end{equation*}
with $u = - a(w)/b(w)$. Hence, to provide transversality, it suffices to choose the function $\Psi_1$ such that $v_1$ and $v_2$ are not proportional, e.g., $\Psi_1^{\prime} \neq  \frac{a \sin x_3 + c x_1^2-g_0}{a \cos x_3 - c x_1 x_2}$.
\end{proof}

It follows from this lemma that the submanifold $N^\ast$ filled by singular extremals coming into $N_1$ is Lagrangian. According to Proposition \ref{prop_opti}, it suffices to prove the regularity of the projection $\pi^\ast$ on $\pi^{-1}(N^\ast)$ using \ref{lem_opti}.

We denote by $v_3$ the nonsingular velocity vector and by $v_k$ the derivative of the projection $\pi^\ast$ of $\d{\partial / \partial z_3}$. We set $V = (v_1, v_2, v_3, v_k)$.

\begin{thm}\label{thm24}
If the function $\Psi_1(\cdot)$ is chosen such that	
\begin{equation} \label{cond_rank}
	\det V \neq  0,
\end{equation}
then the chattering solutions of the problem $\MTTP$ are locally optimal in $C^0$ topology.
\end{thm}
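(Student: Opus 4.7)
The plan is to verify, in turn, the hypotheses of Proposition~\ref{prop_opti}, reducing them to the algebraic condition $\det V \neq 0$ via Lemma~\ref{lem_opti}. First, I would set up the geometric objects in the semi-canonical coordinates $(z,w)$ of Lemma~\ref{lem_chattering1}. By Corollary~\ref{cor_chattering1}, the Hamiltonian flow of $\MTTP$ admits the two subbundles $\Sigma^{\pm}$ with two-dimensional chattering fibers over the singular surface $S$. Lemma~\ref{lem_taget} shows that the lifted target $N_1$ is a Lagrangian submanifold of codimension~$7$ in $\R^8$, and that $\Psi_1(\cdot)$ can be (and is assumed to be) chosen so that $N_1$ is transversal to the singular velocity field in $S$. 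Therefore the union $N^{\ast}$ of singular extremals flowing into $N_1$ is itself Lagrangian, and $\pi^{-1}(N^{\ast})\subset \Sigma^{+}$ is then a Lagrangian subbundle filled by chattering extremals.

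Second, I would invoke Lemma~\ref{lem_opti}: to conclude that $\pi^{\ast}$ restricted to $\pi^{-1}(N^{\ast})$ is regular, it suffices to prove that $d\pi^{\ast}$ restricted to the subspace $\mathcal{L}$ spanned by $\partial/\partial z_3$ and by the tangent plane to the switching surface $\Gamma^{\ast}$ is surjective onto $T_x\R^4$. According to Remark~\ref{rem_thm3}, at a point of $N_1$ the tangent plane to $\Gamma^{\ast}$ is generated by three vectors: the tangent vector $v_1$ to $N_1$ (computed explicitly in the proof of Lemma~\ref{lem_taget}), the singular velocity $v_2$, and the nonsingular (bang) velocity $v_3 = f_0 \pm f_1$. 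Setting $v_k = d\pi^{\ast}(\partial/\partial z_3)$ as in the statement, the image of $\mathcal{L}$ under $d\pi^{\ast}$ is exactly the linear span of $\{v_1,v_2,v_3,v_k\}$ in the $4$-dimensional $x$-space. Surjectivity of $d\pi^{\ast}|_{\mathcal{L}}$ is thus equivalent to the four vectors being linearly independent, i.e., to $\det V \neq 0$.

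Finally, with this regularity of $\pi^{\ast}|_{\pi^{-1}(N^{\ast})}$ in hand, and since $M_1^{\ast}$ is connected, Proposition~\ref{prop_opti} applies directly and yields that the projected trajectories filling $\pi^{-1}(N^{\ast})$ are locally optimal in $C^0$ topology, which is precisely the conclusion. Because the coordinate change \eqref{Var_Trans} is defined separately on $x_3\in\mathcal{D}_1$ and $x_3\in\mathcal{D}_2$, the argument is carried out in each domain; as already observed after Lemma~\ref{lem_chattering1}, the difference between the two does not affect the final conclusion since Proposition~\ref{prop_opti} is local.

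The main obstacle is the identification of the tangent plane to $\Gamma^{\ast}$ with $\mathrm{Span}(v_1,v_2,v_3)$ and the explicit description of $v_k$: one must trace $\partial/\partial z_3$ back through the inverse transformation \eqref{Var_Trans} to get its $x$-component, and then justify that the three generators in Remark~\ref{rem_thm3} indeed exhaust the tangent directions to $\Gamma^{\ast}$ at $N_1$ (rather than only span a subspace). Once this structural identification is made, the rest of the proof is essentially bookkeeping: the condition $\det V\neq 0$ becomes a clean surjectivity criterion, and the machinery of Proposition~\ref{prop_opti} and Lemma~\ref{lem_opti} delivers the optimality conclusion.
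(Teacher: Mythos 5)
Your proposal is correct and follows essentially the same route as the paper: reduce regularity of $\pi^{\ast}$ on $\pi^{-1}(N^{\ast})$ to the linear independence of $v_1,v_2,v_3,v_k$ via Lemma~\ref{lem_opti} and Remark~\ref{rem_thm3}, then conclude with Proposition~\ref{prop_opti}; the paper's proof merely adds the explicit computation of $v_k=d\pi^{\ast}(\partial/\partial z_3)$ on $S_0$ from the coordinates \eqref{var_change}, which you correctly identify as the remaining bookkeeping step.
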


\begin{proof}
On $S_0$ we have
\begin{equation*} 
	\d{d\pi^\ast\left(\frac{\partial}{\partial z_3}\right) = 
	      \frac{\partial x_1}{\partial z_3} \frac{\partial}{\partial x_1} 
		+\frac{\partial x_2}{\partial z_3} \frac{\partial}{\partial x_2} 
		+\frac{\partial x_3}{\partial z_3} \frac{\partial}{\partial x_3} 
		+\frac{\partial x_4}{\partial z_3} \frac{\partial}{\partial x_4}} ,
\end{equation*}
where
\begin{align*}
	&\d{\frac{\partial x_1}{\partial z_3}=0},
	\quad \d{\frac{\partial x_2}{\partial z_3}=\frac{w_2 w_3^2-2cw_1w_4w_3+3w_2}{cw_1^2w_3^2}}, 
	\quad \d{\frac{\partial x_3}{\partial z_3}=-\frac{2w_3}{w_1(1+w_3^2)}}, 
	\quad \d{\frac{\partial x_4}{\partial z_3}=-\frac{w_2}{w_1^2w_3}} , \\
\end{align*}
and hence it follows that 
\begin{equation*} 
	d\pi^\ast\left(\frac{\partial}{\partial z_3}\right) = \left(0,\frac{w_2 w_3^2-2cw_1w_4w_3+3w_2}{cw_1^2w_3^2},-\frac{2w_3}{w_1(1+w_3^2)},-\frac{w_2}{w_1^2w_3}\right)^\top.
\end{equation*}
Denote $\d{d\pi^\ast\left(\frac{\partial}{\partial z_3}\right)}$ as $v_k$. Using \eqref{switch_derives3} and \eqref{var_change} we can get $v_k(w)$ as a vector depending on state variable $x$, i.e. $v_k(x)$. 

According to Lemma \ref{lem_opti} and Remark \ref{rem_thm3}, if $v_1$, $v_2$, $v_3$ and $\d{d\pi\left(\frac{\partial}{\partial z_3}\right)}$ are linearly independent, then the projection is regular. In our problem, we have that the nonsingular velocity vector associated with $u = 1$ is
\begin{equation*} 
	v_3 = (\dot{x}_1,\dot{x}_2,\dot{x}_3,\dot{x}_4)=( a \cos x_3  - c x_1 x_2 , a \sin x_3 + c x_1^2- g_0, x_4 - c x_1, b)^\top .
\end{equation*}

Therefore, if the condition \eqref{cond_rank} is satisfied on the points of $N_1$, then, the curve $N_1$ has been chosen such that the conditions of Proposition \ref{prop_opti} are fulfilled and so it generates the field of locally optimal chattering solutions in $C^0$ topology.
\end{proof}

\begin{rem}
The condition \eqref{cond_rank} in Theorem \ref{thm24} is always satisfied if one chooses an appropriate function $\Psi_1(\cdot)$. Indeed, we have
$$
\det V = \sum_{i=1}^4 v_{i,1} D_{i,1},
$$
where $D_{i,1}$ is the $(i,1)$ minor. Some calculations show that $D_{4,1}=0$, and hence \eqref{cond_rank} becomes $\det V = D_{1,1}-\Psi_1' D_{2,1}-\Psi_1''/\Psi_1' D_{3,1} \neq 0$. It suffices to ensure that $D_{i,1}$, $i=1,2,3$, do not vanish simultaneously. We prove this fact by contradiction: otherwise, it is easy to check that they yield three independent constraints in the $x$-space, and moreover, they are independent of the constraints $y_3=0$ and $x_2=\Psi(x_1)$ for the singular surface $S$. In this case, the number of constraints is larger than the dimension of the $x$-space. Therefore, $D_{i,1}$, $i=1,2,3$ do not vanish simultaneously. 
\end{rem}

\section{Chattering prediction}\label{sec_SingPred}
Since the chattering phenomenon causes deep difficulties for practical implementation, due to the fact that an infinite number of control switchings within finite time cannot be realized in real-life control strategies, in this section our objective is to provide precise conditions under which we can predict that an optimal singular arc does not appear, and thus there is no chattering arcs. 

A maneuver with $\gamma_f \geq \gamma_0$ (resp., $\gamma_f < \gamma_0$) is said to be a \textit{anticlockwise} maneuver (resp., a \textit{clockwise} maneuver).
In practice, the values of $x_{30}$ and $x_{3f}$ are chosen in $(0,\pi/2)$, and the values of $\gamma_0$, $x_{40}$ are chosen such that $| \gamma_0-x_{30} | \leq 0.1$ and $|x_{40}| \leq 0.1$.

We distinguish between those two maneuvers because of the gravity force imposed to the spacecraft. We will see further, in the numerical results, that the clockwise maneuver is easier to perform than the anticlockwise maneuver, in the sense that the clockwise maneuver time is shorter (in time), and there is less possibility of encountering a singular arc. This fact is due to the nonlinear effects caused by the gravity force. Indeed, intuitively, the gravity force tends to reduce the value of $x_2$, and hence the value of $\tan \gamma=x_2/x_1$ tends to get smaller. Then the spacecraft velocity turns naturally to the ground (pitch down) under the effect of the gravity. This tendency helps the clockwise maneuver to be ``easier''.

Although we have set $c=10^{-6}$ (see Table \ref{sim_param}), we have $c \leq 10^{-6}$ in real-life, since $c=1/r$ where $r$ is a distance not less than the radius of the Earth. The case  $c=0$ corresponds to a \textit{flat-Earth case}, and the case $c \in (0,10^{-6}]$ will be referred to as the \textit{non-flat case}.

\subsection{Flat-Earth case $c = 0$} \label{sec_limicase}
We can see from \eqref{flightangle} that $\dot{\gamma}$ is much smaller than $\dot{x}_3$ and $\dot{x}_4$ given by \eqref{singleinputcontrolaffinesystem}. Therefore, the main factor that affects the total maneuver time is the time to change $\gamma$ from $\gamma(0) = \gamma_0$ to $\gamma(t_f) = \gamma_f= x_{3f}$. In order to shorten the maneuver time, it is required to keep $\dot{\gamma}$ as large as possible. 

To this aim, we consider the time minimum control problem in which $x_3$ is seen as a control. We call this problem the \emph{problem of order zero}, i.e.,
\begin{align*}
& \min t_f \quad s.t.\\
& \dot{x}_1= a \cos x_3, \quad \dot{x}_2= a \sin x_3-g_0,\\
& x_1(0)=v_0 \cos \gamma_0,\quad x_2(0)= v_0 \sin \gamma_0, \quad x_2(t_f)-x_1(t_f) \tan \gamma_f = 0,
\end{align*}
The optimal solution of this problem is easy to compute (explicitly) with the PMP. The optimal control on $[0,t_f]$ is given by
\begin{equation} \label{x3_s}
	x_3(t) = x_3^{\ast} =\begin{cases}
	\gamma_f + \pi/2, & \textrm{if}\ \ x_{20}\cos \gamma_f  \leq x_{10}\sin \gamma_f,\\
	\gamma_f -  \pi/2, & \textrm{if}\ \ x_{20}\cos \gamma_f  > x_{10}\sin \gamma_f,
	\end{cases}
\end{equation}
and the maneuver time is 
$$
t_f =\frac{(x_{10}\tan \gamma_f-x_{20})\cos \gamma_f}{a \sin(x_3^\ast-\gamma_f)-g_0\cos \gamma_f}.
$$
Moreover, the adjoint vector is given by
$$
(p_1,p_2)=(\sin \gamma_f,-\cos \gamma_f) \frac{p^0}{a \sin (x_3^\ast-\gamma_f)-g_0\cos \gamma_f}.
$$

\begin{rem}
From this expression, we see that $g_0$ makes the anticlockwise maneuver slower, i.e.,
$
t_f \geq \frac{(x_{10}\tan \gamma_f-x_{20})\cos \gamma_f}{a \sin(x_3^\ast-\gamma_f)},
$
and the clockwise maneuver faster, i.e.,
$
t_f \leq \frac{(x_{10}\tan \gamma_f-x_{20})\cos \gamma_f}{a \sin(x_3^\ast-\gamma_f)}.
$
Therefore, the clockwise maneuver is ``easier'' to perform than the anticlockwise maneuver thanks to the gravity, which corresponds to intuition, as saif at the beginning of this section.
\end{rem}

Turning back to the problem $\MTTP$ in the flat-Earth case, from Lemma \ref{lem_singular}, the singular surface is given by
\begin{multline*}
 S=\{(x,p) \mid x_3=x_3^\ast,\quad x_4=0, \quad p_1=-p^0\cos x_3^\ast /(a-g_0\sin x_3^\ast),\\
 p_2 =-p^0\sin x_3^\ast /(a-g_0\sin x_3^\ast),\quad p_3=0, \quad p_4=0 \}.
\end{multline*}
It is interesting to see that the solution of the problem of order zero coincides with the singular solution of problem $\MTTP$ in the flat-Earth case. We have the following results.

\begin{lem}
Let $x(\cdot)$ be an optimal solution of $\MTTP$ in the flat-Earth case, associated with the control $u$. If $x(\cdot)$ contains at most one point of $S_2=\{ (x,p)| x_3=x_3^\ast\}$, then the control $u$ is bang-bang and switches at most two times.
\end{lem}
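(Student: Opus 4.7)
The plan is to split the proof into two parts: first rule out singular sub-arcs, then bound the number of switching points.

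For the first part, I would invoke Lemma \ref{lem_singular}: along any singular arc in the flat-Earth case $c=0$ one has $p_1 \sin x_3 - p_2 \cos x_3 = 0$ together with $p_3 = p_4 = 0$. Since the adjoint equations \eqref{sys2} with $c=0$ give $\dot p_1 = \dot p_2 = 0$, the components $p_1, p_2$ are constant along the extremal, so $p_1 \sin x_3 = p_2 \cos x_3$ pins $x_3$ to the constant value $x_3^*$ (the value already identified in \eqref{x3_s}). A nontrivial singular sub-interval would then contribute a whole interval of times with $x_3(t) = x_3^*$, contradicting the hypothesis that $x(\cdot)$ meets $S_2$ in at most one point. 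Hence $u$ is bang-bang.

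For the switching count, I would analyze the switching function $\varphi(t) = bp_4(t)$. With $p_1, p_2$ constant, writing $R = \sqrt{p_1^2 + p_2^2}$ and $(\cos x_3^*, \sin x_3^*) = (p_1/R, p_2/R)$, successive differentiation along the flow yields
\begin{equation*}
\dot\varphi(t) = -bp_3(t), \qquad \ddot\varphi(t) = -abR\,\sin\bigl(x_3(t) - x_3^*\bigr).
\end{equation*}
Under the a priori bound $|x_3| \leq \theta_{\max}$, which keeps $x_3 - x_3^*$ inside a single half-period of $\sin$, the zeros of $\sin(x_3(t) - x_3^*)$ on $[0,t_f]$ coincide with the times at which $x_3(t) = x_3^*$, so by hypothesis there is at most one such time.

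The straightforward case is $x_3(t) \neq x_3^*$ for every $t \in [0,t_f]$. Then $\ddot\varphi$ has constant sign, so $\varphi$ is strictly convex (or strictly concave) on $[0,t_f]$, which forces at most two zeros, hence at most two switchings of $u = \mathrm{sign}\,\varphi$. The subtle case is that of exactly one contact time $\tau$. If $x_3$ merely touches $x_3^*$ at $\tau$ without crossing, then $\sin(x_3 - x_3^*)$ has a double zero at $\tau$ but keeps constant sign, and the same count applies. If the contact is a transversal crossing, then $\ddot\varphi$ flips sign at $\tau$ and $\varphi$ is convex on one side of $\tau$ and concave on the other; this transversal sub-case is the main obstacle, and I would rule out a third zero of $\varphi$ using the terminal conditions. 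Namely, the condition $x_4(t_f)=0$, the transversality relation $p_1 \cos x_{3f} + p_2 \sin x_{3f} = 0$ (equivalent to $x_{3f} = x_3^* \mp \pi/2$), and the identity $H \equiv 0$ jointly pin down the sign of $\ddot\varphi(t_f)$ and the behaviour of $\varphi$ near $t_f$, forbidding the convex-then-concave shape required for three zeros. Note also that in the practical range $x_{30}, x_{3f} \in (0,\pi/2)$ this transversal sub-case does not arise at all, since $x_3^* = \gamma_f \pm \pi/2 \notin (0,\pi/2)$ places $x_{30}$ and $x_{3f}$ on the same side of $x_3^*$, making a single crossing topologically impossible; the lemma then reduces to the constant-sign case.
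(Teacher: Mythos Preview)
Your approach is essentially the paper's: rule out singular arcs because they would force an interval of contacts with $S_2$, then show $\ddot\varphi$ keeps constant sign so that $\dot\varphi$ is monotone and $\varphi$ has at most two zeros. The paper writes $\ddot\varphi(t) = ap_2\cos(x_3(t)-\gamma_f)/\cos\gamma_f$ directly from the transversality relation $p_1\cos\gamma_f + p_2\sin\gamma_f = 0$; your form $-abR\sin(x_3(t)-x_3^*)$ is equivalent.

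The one substantive difference is your handling of the transversal-crossing sub-case. Your attempt to exclude a third zero of $\varphi$ via the terminal data ($x_4(t_f)=0$, transversality, $H\equiv 0$) is not a complete argument: a function that is convex on one side of a point and concave on the other can genuinely have three zeros, and the constraints you list fix only the sign of $\ddot\varphi(t_f)$, which is not enough by itself. The paper does not try this route. Instead it rules out the transversal crossing outright by exactly the same-side observation you give at the end: since $x_{3f}=\gamma_f$ and $x_3^*=\gamma_f\pm\pi/2$, and since $x_{30}$ is taken in the same range as $x_{3f}$, both endpoints lie on the same side of $x_3^*$, so a single crossing of $x_3^*$ is topologically impossible, forcing a second contact with $S_2$ and a contradiction. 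You present this as an afterthought restricted to the ``practical range'', but in the paper this topological argument \emph{is} the proof of that sub-case, and it likewise relies (implicitly) on the standing assumption $x_{30},x_{3f}\in(0,\pi/2)$ stated at the beginning of Section~\ref{sec_SingPred}. So drop the terminal-conditions detour and promote the same-side argument to the main line; then your proof coincides with the paper's.
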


\begin{proof}
If $u(\cdot)$ is singular, then $(x(\cdot),p(\cdot))$ is contained in $S \subset S_2$. From the definition of $S$, it is easy to prove that $x(t_1) \neq x(t_2)$ for any $t_1 \neq t_2$ in $[0,t_f]$, which means that $(x(t_1),p(t_1))$ and $(x(t_2),p(t_2))$ are two different points of $S_2$. This contradicts the condition that $x(\cdot)$ contains at most one point of $S_2$. Therefore, $u(\cdot)$ is bang-bang.

It suffices to prove that if $x(\cdot)$ contains at most one point of $S_2$, then $\ddot{\varphi}(t)$, $t \in [0,t_f]$, remains of constant sign and has at most one zero. Indeed, if this is true, then $\dot{\varphi}(t)=-p_3(t)$ is monotone, and it follows that the first derivative of the switching function $\varphi(t)$ has at most two zeros, which means that the control $u$ has at most two switchings. Let us prove this fact by contradiction.
If there exists $t_1 \in [0,t_f]$ such that $(x(t_1),p(t_1)) \in S_2$, using $p_1+\tan \gamma_f p_2=0$ (transversality condition) and $p_1,p_2 \neq 0$, we have 
$$
\ddot{\varphi}(t_1) = -\dot{p}_3(t_1)=-a(p_1\sin x_3-p_2\cos x_3) = a p_2\cos(x_3-\gamma_f)/\cos \gamma_f=0.
$$
From the continuity of $\ddot{\varphi}(\cdot)$, we get that there exist two times $\tau_i<t_1$ and $\tau_j>t_1$ such that $\ddot{\varphi}(\tau_i)\ddot{\varphi}(\tau_j)<0$. It follows that $x_3(t_1)$ and $x_3(t_2)$ are on different sides of $x_3^\ast=\gamma_f \pm \pi/2$, i.e.,
$
(x_3(t_1)-x_3^\ast)(x_3(t_2)-x_3^\ast)<0.
$
However, we know that $x_{30}$ and $x_{3f}=\gamma_f$ are on the same side of $x_3^\ast$, i.e.,
$
(x_{30}-x_3^\ast)(x_{3f}-x_3^\ast)<0,
$
and hence there must exist another time $t_2$ at which $\ddot{\varphi}(t_2)=0$ ($(x(t_2),p(t_2)) \in S_2$) in order to allow the trajectory to reach the terminal submanifold. This is a contradiction.
\end{proof}

We denote a bang arc with $u=1$ (resp. $u=-1$) as $A_+$ (resp. $A_-$), and we denote a chattering arc and a singular arc by $A_c$ and $A_s$, respectively.
Let $\mathcal{F}_{x_3}$ be the union of all trajectories $x(\cdot)$ consisting of three different bang arcs satisfying the terminal conditions $x(0)=x_0$ and $x_3(t_f)=x_{3f}$, $x_4(t_f)=0$. These trajectories are of the form $A_+A_-A_+$ or $A_-A_+A_-$.
Easy calculations show that the optimal control $u(t)$ and the trajectory $x(t)$ of the form $A_+A_-A_+$ (resp. $A_-A_+A_-$) are given by
\begin{equation*} 
u(t) = \begin{cases}
 +1, t\in [0,\tau_1),\quad (\textrm{resp.}, -1)\\
 -1, t\in [\tau_1,\tau_2)\cup[\tau_2,\tau_3), \quad (\textrm{resp.}, +1)\\
 +1, t\in [\tau_3,t_f], \quad (\textrm{resp.}, -1)
\end{cases}
\end{equation*}
and
\begin{equation} \label{pitchup_x3}
\begin{split}
x_1(t) & = v_0 \cos \gamma_0 +\int_0^t a\cos x_3(s) ds,\\
x_2(t) & = v_0 \sin \gamma_0 +\int_0^t a\sin x_3(s)-g_0 ds,\\
x_3(t)& = \begin{cases}
x_{30} + x_{40} t + b t^2/2,\quad t \in [0,\tau_1),\quad 
(\textrm{resp.}, x_{30} - x_{40} t- b t^2/2,)\\
x_3(\tau_1) + (x_{40} + b\tau_1) (t - \tau_1) - b(t - \tau_1)^2/2,\quad t \in [\tau_1,\tau_2),\\
(\textrm{resp.}, x_3(\tau_1) - (x_{40} + b\tau_1) (t - \tau_1) + b(t - \tau_1)^2/2,)\\
\bar{x}_3 - b(t-\tau_2)^2/2,\quad t \in [\tau_2,\tau_3),\quad 
(\textrm{resp.}, \bar{x}_3 + b(t-\tau_2)^2/2,) \\
x_3(\tau_3) - b(\tau_3 - \tau_2)(t - \tau_3) + b(t - \tau_3)^2/2,\quad t \in [\tau_3,t_f],\\
(\textrm{resp.}, x_3(\tau_3)+b(\tau_3 - \tau_2)(t - \tau_3)-b(t - \tau_3)^2/2,)
\end{cases} \\
x_4(t)& = \begin{cases}
x_{40}+bt,\quad t \in [0,\tau_1), (\textrm{resp.}, x_{40}-bt,)\\
x_{40}+b\tau_1-b(t-\tau_1),\quad t \in [\tau_1,\tau_2), (\textrm{resp.}, x_{40}-b\tau_1+b(t-\tau_1),)\\
-b(t-\tau_2),\quad t \in [\tau_2,\tau_3), (\textrm{resp.}\:b(t-\tau_2),)\\
-b(\tau_3-\tau_2)+b(t-\tau_3),\quad t \in [\tau_3,t_f],(\textrm{resp.}, b(\tau_3-\tau_2)-b(t-\tau_3),)
\end{cases}
\end{split}
\end{equation}
with
\begin{equation*}
\tau_1 = - \frac{x_{40}}{b} + \sqrt{\frac{x_{40}^2}{2b^2} - \frac{x_{30} -\bar{x}_3}{b}}, \quad \tau_2 = 2 \tau_1 + \frac{x_{40}}{b}, \quad
\tau_3 = \tau_2 + \sqrt{- \frac{x_{3f} - \bar{x}_3}{b}}, \quad t_f = 2 \tau_3  - \tau_2,
\end{equation*}
\begin{multline*}
\Big( \textrm{resp.},
\tau_1 = - \frac{x_{40}}{b} + \sqrt{\frac{x_{40}^2}{2b^2} + \frac{x_{30} - \bar{x}_3}{b}},\quad \tau_2 = 2 \tau_1 + \frac{x_{40}}{b}, \quad
\tau_3 = \tau_2 + \sqrt{  \frac{x_{3f} - \bar{x}_3}{b}}, \\
 t_f= 2 \tau_3  - \tau_2, \Big)
\end{multline*}
where $\bar{x}_3$ is the maximal (resp., minimal) value of $x_3(t)$, $t\in[0,t_f]$. Besides, by integration, we have
\begin{equation*}
p_3(t)=p_3(0)+\int_0^t a(p_1\sin x_3(\tau)-p_2\cos x_3(\tau)) \, d\tau = p_3(0)-\frac{ap_2}{\cos \gamma_f}\int_0^t \cos(x_3(\tau)-\gamma_f)\, d\tau,
\end{equation*}
and
\begin{equation} \label{p4t}
p_4(t)=p_4(0)-p_3(0)t+\frac{ap_2}{\cos \gamma_f}\int_0^t \int_0^s \cos(x_3(\tau)-\gamma_f) \, d\tau \,ds.
\end{equation}
Using $p_4(\tau_1)=p_4(\tau_3)=0$, we get
\begin{equation} \label{p30}
p_3(0)=\frac{ap_2}{(\tau_3-\tau_1) \cos \gamma_f} \left( \int_0^{\tau_3} \int_0^s \cos(x_3(\tau)-\gamma_f) \,d\tau\,ds -\int_0^{\tau_1} \int_0^t \cos(x_3(\tau)-\gamma_f) \,d\tau\,ds \right),
\end{equation}
and
\begin{multline} \label{p40}
p_4(0)=\frac{ap_2}{\cos \gamma_f} 
\Big(\frac{ \tau_1}{(\tau_3-\tau_1)} \int_0^{\tau_3} \int_0^s \cos(x_3(\tau)-\gamma_f) \,d\tau\,ds 
\\
-\frac{ \tau_3}{(\tau_3-\tau_1)} \int_0^{\tau_1} \int_0^s \cos(x_3(\tau)-\gamma_f) \,d\tau\,ds \Big).
\end{multline}
From $H(0)=0$ and using the transversality condition, we infer $p_1$ and $p_2$ as functions of $\bar{x}_3$ provided $p^0 \neq 0$. Actually, $p^0$ is indeed nonzero, otherwise, using  $H(0)=0$, the transversality condition and equations \eqref{p30} and \eqref{p40}, we would infer that $p=0$, which is absurd.
We see that, if moreover $x_2(t_f)=x_1(t_f)\tan x_{3f}$, then the trajectories $x(t)$ in $\mathcal{F}_{x_3}$ together with $p(t)$ satisfy all necessary conditions of the PMP.

The value $\bar{x}_3$ can be numerically derived from the condition $x_2(t_f)=x_1(t_f)\tan x_{3f}$, and then $(x(t),p(t))$ is obtained. In fact, for given terminal conditions $x(0)=x_0$, $x_3(t_f)=x_{3f}$ and $x_4(t_f)=0$, $\mathcal{F}_{x_3}$ can be seen as a one-parameter family of trajectories with parameter $\bar{x}_3$.
Hence, for any given $\d{ \bar{x}_3 \in (\max(\bar{x}_{30},x_{3f}),x_3^\ast]}$ (resp., $\d{\bar{x}_3  \in [x_3^\ast, \min (\bar{x}_{30},x_{3f}))}$) with $\bar{x}_{30}=x_{30}-\frac{x_{40}^2}{2b}\mathrm{sign} x_{40} $, we have
\begin{equation} \label{gammafx3}
\gamma_f(\bar{x}_3)=\gamma(t_f(\bar{x}_3))=\arctan x_2(t_f(\bar{x}_3))/x_1(t_f(\bar{x}_3)).
\end{equation}
If we have 
\begin{equation} \label{dgammadx3}
\begin{split}
\frac{\partial \gamma_f(\bar{x}_3)} {\partial \bar{x}_3} 
&= \frac{1}{v} \left( \frac{\partial x_2(t_f(\bar{x}_3))} {\partial \bar{x}_3} \cos \gamma_f - \frac{\partial x_1(t_f(\bar{x}_3))} {\partial \bar{x}_3} \sin \gamma_f \right) \\
&=\frac{1}{v_f t_f}  \int_0^{t_f}\left( a \big( T_1 \sin(\bar{x}_3-\gamma_f)+ t_f \cos(\bar{x}_3-\gamma_f)\big) - g_0 T_1\cos \gamma_f \right) dt \\
&=\frac{ 1}{v_f t_f} \int_0^{t_f} \left( a \sqrt{T_1^2+t_f^2} \sin(\bar{x}_3-\gamma_f+\bar{\varphi})- g_0 T_1 \cos \gamma_f \right) dt > 0,
\end{split}
\end{equation}
where
\begin{equation*}
\begin{split}
 v_f &= \sqrt{x_1(t_f(\bar{x}_3))^2+x_2(t_f(\bar{x}_3))^2},\quad
\bar{\varphi} = \arctan \left( \frac{t_f}{T_1} \right), \\
 T_1&=\frac{1}{\sqrt{(\bar{x}_3-x_{30})/b+x_{40}^2/(2b^2)}}+\frac{1}{\sqrt{(\bar{x}_3-\gamma_f)/b}},
\end{split}
\end{equation*}
for all $x(t)$ in $\mathcal{F}_{x_3}$, then we have that $\gamma_f(\bar{x}_3)$ is monotone with $\bar{x}_3$. Therefore, the value of $\gamma_f(\bar{x}_3)$ reaches its maximum (resp., minimum) when $\bar{x}_3 = x_3^\ast$. In this sense, we have a reachable set of $\gamma_f$ as a function of $\bar{x}_3$. 

\begin{rem} \label{rem_casedist}
In the anticlockwise case, the trajectories generally take the form of $A_+A_-A_+$. However, if the condition \eqref{dgammadx3} is valid, $\gamma_f(\bar{x}_3)$ achieves a minimum extremal value over $[\max(\bar{x}_{30},x_{3f}),x_3^\ast]$ when $\bar{x}_3=\max(\bar{x}_{30},x_{3f})$. Then, if $\gamma_f < \gamma_f(x_{3f})$, the trajectory takes the form $A_-A_+A_-$. There exists a $\hat{x}_3=\bar{x}_3 \in [x_3^\ast,\min(\bar{x}_{30},x_{3f}))$ such that $\gamma_f(\hat{x}_3)=\gamma_0$. Hence, $\bar{x}_3$ takes value in $\mathcal{D}_{x_3}^{ac}=(\max(\bar{x}_{30},x_{3f}),x_3^\ast] \cup [\hat{x}_3,\min(\bar{x}_{30},x_{3f}))$ for anticlockwise maneuvers. For the clockwise maneuvers, we have that $\bar{x}_3$ takes value in $\mathcal{D}_{x_3}^{c}=(\max(\bar{x}_{30},x_{3f}), \hat{x}_3] \cup [x_3^\ast, \min(\bar{x}_{30},x_{3f}))$ where $\hat{x}_3 \in (\max(\bar{x}_{30},x_{3f}),x_3^\ast]$ being the extremal value such that $\gamma_f(\hat{x}_3)=\gamma_0$.
\end{rem}

The positivity condition \eqref{dgammadx3} is hard to check explicitly, however, numerically this condition can be verified easily for given terminal conditions. 
This is why we take it as an assumption. Accordingly, we make the following assumptions throughout this section. The first assumption is that $\tau_1$, $\tau_2$, $\tau_3$ and $t_f$ are nonnegative real numbers. The second assumption is that \eqref{dgammadx3} holds. The third one is that the spacecraft would not crash after the maneuver. The results of our numerical simulations are consistent with these assumptions:
\begin{itemize}
\item the real numbers $x_{30}$, $x_{40}$, $x_{3f}$ are chosen such that $\tau_1 \geq 0$, $\tau_2 \geq 0$, $\tau_3 \geq 0$ and $t_f >0$;
\item for every $ \bar{x}_3 \in (\max(\bar{x}_{30},x_{3f}),x_3^\ast]$ (resp., $\bar{x}_3  \in [x_3^\ast, \min (\bar{x}_{30},x_{3f}))$) with $\bar{x}_{30}=x_{30}-\frac{x_{40}^2}{2b}\mathrm{sign} x_{40} $, we have
\begin{align*}
\int_0^{t_f} \sin(\bar{x}_3-\gamma_f+\bar{\varphi}) \, dt > \frac{g_0 t_f \cos \gamma_f}{a \sqrt{1+\tan^2\bar{\varphi}} } ;
\end{align*}
\item $x_1(t_f)>0$, $x_2(t_f)>0$.
\end{itemize}
Under these assumptions, we have the following chattering prediction result.

\begin{thm}[Chattering prediction]\label{thm_predic}
Let $x(\cdot) \in \mathcal{F}_{x_3}$ be an optimal trajectory of $\MTTP$ in the flat-Earth case. In the anticlockwise case (resp., in the clockwise case), if 
\begin{equation} \label{pitch_cond}
	S_C \geq 0\qquad (\textrm{resp., if}\ S_C \leq 0),
\end{equation}
with $S_C$ defined by
\begin{equation} \label{Scdef}
    S_C = x_2(t_f(x_3^\ast))-x_1(t_f(x_3^\ast))\tan \gamma_{f},
\end{equation}
where
$x_1(t_f(x_3^\ast)) = x_{10} +\int_0^{t_f(x_3^\ast)} a \cos x_3(t) \, dt$, $x_2(t_f(x_3^\ast)) = x_{20} +\int_0^{t_f(x_3^\ast)} (a \sin x_3(t) -g_0) \, dt$, and $x_3(t)$ is calculated from \eqref{pitchup_x3} with $\bar{x}_3=x_3^\ast$, then $x(\cdot)$ does not involve any singular arc.
\end{thm}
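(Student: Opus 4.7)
The plan is to translate the algebraic inequality $S_C\geq 0$ into a geometric statement about the final flight-path angle reachable by the family $\mathcal{F}_{x_3}$, and then use the monotonicity assumption \eqref{dgammadx3} to pin down the parameter $\bar x_3$ of the optimal trajectory and rule out that $x_3\equiv x_3^\ast$ on a sub-interval of positive length. Combining \eqref{Scdef} with the relation $x_2(t_f(x_3^\ast))=x_1(t_f(x_3^\ast))\tan\gamma_f(x_3^\ast)$ extracted from \eqref{gammafx3}, one first rewrites
\[
S_C \;=\; x_1\bigl(t_f(x_3^\ast)\bigr)\,\bigl(\tan\gamma_f(x_3^\ast)-\tan\gamma_f\bigr),
\]
so that under the standing sign assumption $x_1(t_f)>0$ and with angles in $(-\pi/2,\pi/2)$ the inequality $S_C\geq 0$ is equivalent to the reachability statement $\gamma_f(x_3^\ast)\geq\gamma_f$.

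In the anticlockwise case ($\gamma_f>\gamma_0$), I would then invoke \eqref{dgammadx3} for strict monotonicity of $\bar x_3\mapsto\gamma_f(\bar x_3)$ on the first component $(\max(\bar x_{30},x_{3f}),x_3^\ast]$ of $\mathcal{D}_{x_3}^{ac}$, whose maximum $\gamma_f(x_3^\ast)$ is attained at the right endpoint and whose left-endpoint limit lies strictly below $\gamma_f$ (by Remark \ref{rem_casedist}, since the inner arc degenerates there and the trajectory essentially returns to $\gamma_0<\gamma_f$). The intermediate value theorem then produces a unique $\bar x_3^{\flat}\in(\max(\bar x_{30},x_{3f}),x_3^\ast]$ with $\gamma_f(\bar x_3^{\flat})=\gamma_f$. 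Together with the initial adjoint data $(p_3(0),p_4(0))$ read off from \eqref{p30}--\eqref{p40}, the identity $H(0)=0$ and the transversality condition, this value $\bar x_3^{\flat}$ generates a complete extremal lift of a trajectory of $\mathcal{F}_{x_3}$ meeting every terminal condition of $\MTTP$.

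By the standing hypothesis that $x(\cdot)\in\mathcal{F}_{x_3}$ is optimal and by the uniqueness of $\bar x_3^{\flat}$ on the relevant monotonicity component, $x(\cdot)$ has to be this very trajectory, so $\bar x_3 = \bar x_3^{\flat}\leq x_3^\ast$. The explicit formula \eqref{pitchup_x3} then shows that $x_3(t)$ reaches the singular value $x_3^\ast$ only at the isolated time $\tau_2$, and only in the borderline case $\bar x_3^{\flat}=x_3^\ast$. By Lemma \ref{lem_singular} any singular arc of $\MTTP$ requires $x_3\equiv x_3^\ast$ together with $x_4\equiv 0$ and $p_3\equiv p_4\equiv 0$ on a sub-interval of positive length, which is impossible for $x(\cdot)$; moreover at $\tau_2$ the control equals $\pm 1$, not the singular feedback \eqref{singularcontrol}. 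Hence $x(\cdot)$ involves no singular arc. The clockwise case is handled by the symmetric argument on $[x_3^\ast,\min(\bar x_{30},x_{3f}))$, where the direction of monotonicity reverses and $S_C\leq 0$ translates into $\gamma_f(x_3^\ast)\leq\gamma_f$.

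The step I expect to be the main obstacle is the selection of the correct monotonicity component: a priori the equation $\gamma_f(\bar x_3)=\gamma_f$ may also admit a root in the second component $[\hat x_3,\min(\bar x_{30},x_{3f}))$ of $\mathcal{D}_{x_3}^{ac}$, producing an $A_-A_+A_-$ candidate, and disqualifying it as a time-minimizer requires comparing the closed-form maneuver times $t_f(\bar x_3)$ from \eqref{pitchup_x3} across the two components. Once the $A_+A_-A_+$ solution is singled out, the exclusion of any singular sub-arc is essentially immediate from Lemma \ref{lem_singular}.
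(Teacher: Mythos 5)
Your proof is correct and follows essentially the same route as the paper's: translate $S_C\geq 0$ into $\gamma_f(x_3^\ast)\geq\gamma_f$ using $x_1(t_f)>0$, then use the monotonicity assumption \eqref{dgammadx3} to produce a $\bar x_3\in\mathcal{D}_{x_3}^{ac}$ whose associated trajectory of $\mathcal{F}_{x_3}$ meets the terminal conditions and is therefore bang-bang with no singular arc (the paper invokes the implicit function theorem where you use the intermediate value theorem). The obstacle you flag --- a possible root in the second component of $\mathcal{D}_{x_3}^{ac}$ --- is immaterial, since a root in either component yields an $\mathcal{F}_{x_3}$ trajectory (of type $A_+A_-A_+$ or $A_-A_+A_-$) free of singular arcs, which is all the conclusion requires.
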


\begin{proof} 
In the conterclockwise case, if $S_C \geq 0$, then we get from \eqref{gammafx3} and \eqref{Scdef} that $\tan \gamma_f(x_3^\ast) \geq \tan \gamma_f$ provided that $x_1(t_f)>0$ and that $x_2(t_f)>0$. Using that $\gamma_f = x_{3f} \in \mathcal{D}_f$, it follows that
$
 (\gamma_f(x_3^\ast)-\gamma_0) \geq (\gamma_f -\gamma_0).
$
Since $\partial \gamma_f(\bar{x}_3) / \partial \bar{x}_3 > 0$, we infer from the implicit function theorem that there exists a $\bar{x}_3=\mathcal{X}(\gamma_f) \in \mathcal{D}_{x_3}^{ac}$, where $\mathcal{X}(\cdot)$ is $C^1$ function, such that $\gamma_f(x_3^\ast) \geq \gamma_f(\bar{x}_3)$ and that the corresponding trajectory $x(t)$ is an optimal trajectory for $\MTTP$ with terminal value of $\gamma_f(\bar{x}_3)$. The proof is similar in the clockwise case.
\end{proof}

\begin{rem} \label{rem_predic}
If \eqref{pitch_cond} is not satisfied, then there are two possible types of solutions: one has more bang arcs, the other has a singular arc with chattering arcs around the singular junctions. The points of $S_2$ actually correspond to the zeros of the second-order time derivative of the switching function ($z_3=0$ in the semi-canonical form), and so the zeros of $S_2$ will impose an immediate effect on the switching function, but ensure the switching function to have more possible switchings. The numerical results show that the additional bang arcs lead to extremals that are closer to the singular surface with an exponential speed. 
\end{rem}

\subsection{Non-flat case $c>0$} \label{sec_normcase}
If $c > 0$ then the analysis of the problem $\MTTP$ becomes more complicated, but we are able as well to describe the set of initial data for which optimal trajectories do not have any singular arc, as we will see next.

We assume that the condition \eqref{dgammadx3} still holds, i.e.,
$
\frac{\partial \gamma_f(\bar{x}_3)} {\partial \bar{x}_3} >0,
$
and the real numbers $x_{30}$, $x_{40}$, $x_{3f}$ are chosen such that $\tau_1 \geq 0$, $\tau_2 \geq 0$, $\tau_3 \geq 0$ and $t_f >0$.
Assume moreover that the real numbers $v_0$ and $\gamma_0$ are chosen such that the two components of the velocity are positive along the whole trajectory, i.e., $x_1(t)>0$, $x_2(t)>0$, $t\in [0,t_f]$. Using Table \ref{sim_param}, we have 
\begin{equation*}
\begin{split}
&\dot{x}_1(t)  \in  [a \cos x_3 - c v_{max}^2, a \cos x_3], \\
&\dot{x}_2(t)  \in [a \sin x_3 - g_0, a \sin x_3 - g_0 + c v_{max}^2], \\
&\dot{x}_3(t)  \in [x_4 - c v_{max}, x_4],
\end{split}
\end{equation*}
where $v_{max}^2 \approx (x_{10}+a T)^2+(x_{20}+a T + a^2 c T^3/3)^2 < v_m^2$. It can be seen that the terms in $c$ in the dynamics cause a decrease of $x_1$ and $x_3$, and an increase of $x_2$. 
We consider the auxiliary problem
\begin{equation*} 
\left\{\begin{split}
& \min t_f \\
& \dot{x}_1= a \cos x_3-c_1 x_1 x_2, \quad 
\dot{x}_2= a \sin x_3-g_0+c_1x_1^2,\quad 
\dot{x}_3=x_4-cv_{max},\quad 
\dot{x}_4=bu, \\
& x(0)=x_0,\quad x_3(t_f)=x_{3f}, \quad x_4(t_f)=0,
\end{split}\right.
\end{equation*}
where $c, c_1\in[0,10^{-6}]$. Similarly to the flat-Earth case, the solutions of this problem, of the form $A_+A_-A_+$ (resp., $A_-A_+A_-$), can be obtained by integrating the dynamical system, by using the control 
\begin{equation*}
u(t) = \begin{cases}
 +1, t\in [0,\tilde{\tau}_1),\quad (\textrm{resp.}, -1)\\
 -1, t\in [\tilde{\tau}_1,\tilde{\tau}_2)\cup[\tilde{\tau}_2,\tilde{\tau}_3), \quad (\textrm{resp.}, +1)\\
 +1, t\in [\tilde{\tau}_3,\tilde{t}_f], \quad (\textrm{resp.}, -1)
\end{cases}
\end{equation*}
with
\begin{equation*} \label{}
\begin{split}
&\tilde{\tau}_1= - \frac{(x_{40}-c v_{max})}{b} 
	      + \sqrt{\frac{(x_{40}-c v_{max})^2}{2b^2} 
	        - \frac{x_{30} - \bar{x}_3}{b}},\quad
\tilde{\tau}_2  = 2 \tau_1 + \frac{(x_{40}-c v_{max})}{b}, \\
&\tilde{\tau}_3 = \tau_2 + \sqrt{\frac{(-c v_{max})^2}{2b^2} - \frac{x_{3f} - \bar{x}_3}{b}},\qquad
\tilde{t}_f = 2 \tau_3 - \frac{c v_{max}}{b} - \tau_2 .
\end{split}
\end{equation*}
\begin{multline*} 
\Big( \textrm{resp.},
\tilde{\tau}_1= - \frac{(x_{40}-c v_{max})}{b} 
	      + \sqrt{\frac{(x_{40}-c v_{max})^2}{2b^2} 
	        + \frac{x_{30} - \bar{x}_3}{b}},\quad
\tilde{\tau}_2  = 2 \tau_1 + \frac{(x_{40}-c v_{max})}{b}, \\
\tilde{\tau}_3 = \tau_2 + \sqrt{\frac{(-c v_{max})^2}{2b^2} + \frac{x_{3f} - \bar{x}_3}{b}},\qquad
\tilde{t}_f = 2 \tau_3 - \frac{c v_{max}}{b} - \tau_2 \Big)
\end{multline*}
Let $\tilde{\gamma}_f(x_3^\ast,c,c_1)=\min \big(\gamma(t_f(x_3^\ast),c>0,c_1=0),\gamma(t_f(x_3^\ast),c>0,c_1>0)\big)$ for this problem. Based on the numerical results, we make the following assumptions:
\begin{itemize}
\item $\tilde{\gamma}_f(x_3^\ast,c,c_1) \geq \gamma_f(x_3^\ast)=\gamma_f(t_f(x_3^\ast),c=0,c_1=0)$ in the anticlockwise maneuvers;
\item $\tilde{\gamma}_f(x_3^\ast,c,c_1) \leq \gamma_f(x_3^\ast)=\gamma_f(t_f(x_3^\ast),c=0,c_1=0)$ in the clockwise maneuvers;
\end{itemize}
Under these assumptions, we have the following result.

\begin{cor} \label{cor_predic}
Let $x(\cdot)$ be an optimal trajectory of $\MTTP$ in the non-flat case. Then:
\begin{itemize}
\item for an anticlockwise maneuver, if \eqref{pitch_cond} holds true then $x(\cdot)$ does not have any singular arc;
\item for a clockwise maneuver, if 
$$
\tilde{S}_C= x_2(\tilde{t}_f(x_3^\ast),c,c_1)-x_1(\tilde{t}_f(x_3^\ast),c,c_1)\tan \gamma_{f} \leq 0,
$$
where $x_2(\tilde{t}_f(x_3^\ast),c,c_1)/x_1(\tilde{t}_f(x_3^\ast),c,c_1)=\tan\tilde{\gamma}_f(x_3^\ast,c,c_1)$,
then $x(\cdot)$ does not have any singular arc.
\end{itemize}
\end{cor}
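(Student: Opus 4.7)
The plan is to follow the blueprint of Theorem~\ref{thm_predic}, adapting the reachable-set argument to the non-flat dynamics through the one-sided comparison supplied by the auxiliary problem. First I would parametrize, exactly as in the flat-Earth case, the family $\mathcal{F}_{x_3}$ of bang-bang candidates of type $A_+A_-A_+$ (anticlockwise) and $A_-A_+A_-$ (clockwise) by the pitch extremum $\bar{x}_3$, now integrating the full dynamics \eqref{sys1} with $c>0$. The switching times $\tau_1,\tau_2,\tau_3$ are determined by the boundary conditions on $x_3$ and $x_4$ as before; the $x_1,x_2$ trajectories are obtained by quadrature from $x_3(\cdot)$, and the adjoint variables $p_1,p_2$ from the transversality condition together with $H(0)=0$, while $p_3,p_4$ come from the analogues of \eqref{p4t}, \eqref{p30}, \eqref{p40}. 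Each such candidate is an extremal of $\MTTP$ satisfying the PMP.

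For the anticlockwise case, the hypothesis $S_C\geq 0$ gives, by the argument of Theorem~\ref{thm_predic}, $\gamma_f(x_3^\ast)|_{c=0}\geq \gamma_f$. The first auxiliary-problem assumption yields $\tilde{\gamma}_f(x_3^\ast,c,c_1)\geq \gamma_f(x_3^\ast)|_{c=0}$, and since the auxiliary system (with $\dot{x}_3=x_4-cv_{\max}$ and the drift terms $\mp c_1 x_1 x_2$, $+c_1 x_1^2$) is constructed precisely to be a lower bound, in the pitch-up direction, for the true non-flat value of $\gamma_f(x_3^\ast,c)$, we obtain $\gamma_f(x_3^\ast,c)\geq \tilde{\gamma}_f(x_3^\ast,c,c_1)\geq \gamma_f$. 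Then the monotonicity $\partial\gamma_f/\partial\bar{x}_3>0$ and the intermediate-value (implicit function) theorem provide some $\bar{x}_3\in\mathcal{D}_{x_3}^{ac}$ such that the associated $A_+A_-A_+$ trajectory in $\mathcal{F}_{x_3}$ hits the target manifold exactly, so the optimal trajectory is bang-bang with at most three arcs and contains no singular piece.

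For the clockwise case the argument is symmetric: $\tilde{S}_C\leq 0$ gives $\tilde{\gamma}_f(x_3^\ast,c,c_1)\leq \gamma_f$, and the second assumption makes $\tilde{\gamma}_f$ an upper bound on the true non-flat $\gamma_f(x_3^\ast,c)$ in the pitch-down regime, so monotonicity again produces a suitable $\bar{x}_3\in\mathcal{D}_{x_3}^{c}$ realizing the target value through a pure $A_-A_+A_-$ arc. In both cases, the existence of a PMP extremal of $\mathcal{F}_{x_3}$ reaching $M_1$ precludes the need for a singular (hence chattering) portion.

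The main obstacle is the comparison step: one must verify that the one-sided bounds on $\dot{x}_1,\dot{x}_2,\dot{x}_3$ induced by the $c$- and $c_1$-terms indeed translate into the claimed one-sided bound on $\gamma_f$ evaluated at $\bar{x}_3=x_3^\ast$. Rather than working it out analytically (which is delicate because of the coupling between the $x_1,x_2$ equations via the curvature term $c$), the corollary absorbs this fact into its two standing assumptions, which are justified numerically using the Ariane~5 parameters of Table~\ref{sim_param}. The proof proper therefore reduces to chaining together monotonicity of $\gamma_f(\bar{x}_3)$, the intermediate-value theorem, and the auxiliary-problem inequality, essentially as in Theorem~\ref{thm_predic}, with the non-flat perturbations handled uniformly through the auxiliary bound $\tilde{\gamma}_f$.
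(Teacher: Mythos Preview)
Your proposal is correct and follows essentially the same approach as the paper: chain the inequality coming from $S_C\geq 0$ (resp.\ $\tilde S_C\leq 0$) with the standing numerical assumption comparing $\tilde\gamma_f(x_3^\ast,c,c_1)$ to the flat-Earth value, then invoke the monotonicity $\partial\gamma_f/\partial\bar x_3>0$ and the implicit function theorem to produce a $\bar x_3$ whose associated bang-bang extremal hits the target, exactly as in Theorem~\ref{thm_predic}. The paper's proof is in fact terser than yours and does not spell out the intermediate bound $\gamma_f(x_3^\ast,c)\geq\tilde\gamma_f(x_3^\ast,c,c_1)$ that you interpolate; you are right that this comparison is what the auxiliary problem is built to supply and that it is absorbed into the numerically verified assumptions rather than proved analytically.
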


\begin{proof}
For an anticlockwise maneuver (resp. a clockwise maneuver), we have that if $S_C \geq 0$ (resp. $\tilde{S}_C \leq 0$), then
$
0 \leq \gamma_f 
\leq  \gamma_f(x_3^\ast) 
\leq \tilde{\gamma}_f(x_3^\ast,c,c_1) , 
$
(resp., $ 0 \geq  \gamma_f \geq  \tilde{\gamma}_f(x_3^\ast,c,c_1)$),
and thus there exists a $\bar{x}_3=\tilde{\mathcal{X}}(\gamma_f)$ such that
$
\big(\gamma_f(\bar{x}_3,c>0,c_1>0) - \gamma_0 \big)  \leq \big( \tilde{\gamma}_f(x_3^\ast,c,c_1) -\gamma_0 \big),
$ 
(resp., $
\big(\gamma_f(\bar{x}_3,c>0,c_1>0) - \gamma_0 \big)  \geq \big( \tilde{\gamma}_f(x_3^\ast,c,c_1) -\gamma_0 \big)
$),
and its associated trajectory is an optimal solution of the problem $\MTTP$. 
\end{proof}

\begin{rem} \label{rem_predi_c}
Similarly to Remark \ref{rem_predic}, in the non-flat case, numerical results show that if the conditions in Corollary \ref{cor_predic} are not satisfied, then the trajectories will have more bang arcs until the singular arc finally appear with chattering type junctions. 
\end{rem}

\section{Numerical Results} \label{sec_NumeSimu}
In this section, we compute numerical optimal strategies, for different initial conditions, either by means of a direct method, or by means of an indirect one (shooting method).
It is important to note that, if the optimal trajectory involves a singular arc and thus has chattering, then the shooting method fails in general. Indeed, the infinite number of switchings may cause a failure in the numerical integration of the dynamical system, and then direct methods may therefore be more appropriate to approach chattering. However, since they are based on a discretization, they can only provide a sub-optimal solution of the problem, having a finite number of switchings.

In the first subsection, we provide several numerical simulations, where the optimal solutions are computed by means of a shooting method, in situations where the optimal trajectory is known to be bang-bang, without any singular arc, and with a finite number of switchings.

In the second subsection, we describe in more details sub-optimal strategies, and we provide evidence of their relevance in cases where we have chattering.

In our numerical simulations, we consider the initial and final conditions settled in Table \ref{cond_init_fina}.
 
\begin{table}[h]
\centering
\begin{tabular}{|c|c|c|c|c||c|c|c|}
  \hline
   & $x_{30}$ & $x_{40}$ & $x_{10}$ & $x_{20}$ & $x_{3f}$ & $x_{40}$ & $x_{2f}-x_{1f}\tan x_{3f}$ \\
  \hline
  Counter-clockwise & $1.3$ & $0.0$ & $v_0\cos x_{30}$ & $v_0\sin x_{30}$ & $1.5$ & $0.0$ & $0.0$ \\
  \hline
  Clockwise         & $1.5$ & $0.0$ & $v_0\cos x_{30}$ & $v_0\sin x_{30}$ & $1.3$ & $0.0$ & $0.0$ \\
  \hline
  \multicolumn{8}{|c|}{$\d{v_0=(x_{10}^2+x_{20}^2)^{1/2}}$} \\
  \hline
\end{tabular}
\caption{Initial and final conditions.}\label{cond_init_fina}
\end{table}

Here, we set $\gamma(0)=\gamma_0=x_{30}$, meaning that before the maneuver the spacecraft was on a trajectory with angle of attack equal to zero. 

Recall that when the optimal trajectory contains a singular arc, then the extremal is normal, i.e. $p^0 \ne 0$ (see Lemma \ref{lem_singular}).  
Moreover, in the flat-Earth case, we have seen from the analysis in Section \ref{sec_limicase} that the bang-bang extremals are normal in case of two control switchings. The argument was based on equations \eqref{p30} and \eqref{p40}. 
Furthermore, it is not difficult to see that if the control switches at least two times, then the extremals are normal. Therefore, abnormal extremals may only occur whenever the control switches at most one time. 

In the non-flat case, since $c>0$ is very small, we can assume that $p^0 <0$ though the abnormal extremals may also exist with a few certain terminal conditions.
Thus, the adjoint vector can be normalized by $p^0=-1$. The results of the numerical simulations are consistent with this assumption.

\subsection{Chattering prediction}\label{sec_NumeChatPred}
In practice, the terminal condition that can take very different values is the initial modulus of velocity $v_0$. Hence, we next investigate the influence of $v_0$ on the occurrence of optimal singular arcs.

\paragraph{Flat-Earth case with two switchings.}
If we consider $v_0$ as variable and if we take $c = 0$, then, by solving $S_C = 0$, we get $\bar{v}_{up} = v_0 = 1086.2\,m/s$ (resp., $\bar{v}_{down} = v_0 = 1694.3\,m/s$) for anticlockwise maneuvers (resp., for clockwise maneuvers). When $v_0 \leq \bar{v}_{up}$ (resp., $v_0 \leq \bar{v}_{down}$), we have $S_C \geq 0$ for anticlockwise maneuvers (resp., $S_C \leq 0$ for clockwise maneuvers). In this case, according to Theorem \ref{thm_predic}, there is no singular arc in the optimal solution. Moreover, the maneuver times for both maneuvers are the same, i.e., $t_f = 36.5437\, s$.

Using an indirect method (shooting method), we compute the optimal solutions of the problem $\MTTP$, in the absence of a singular arc. Recall that the indirect method does not work when there are chattering arcs. From the prediction above, we should therefore be able to use successfully an indirect method when $v_0\leq\bar{v}_{up}$. We will see in numerical simulations that the indirect method works when the trajectory consists of three bang arcs, but fails otherwise due to chattering.

Figure \ref{compare_1} provides the solutions for two different values of the initial velocity modulus $v_0$ for the anticlockwise case, i.e., $v_0=\bar{v}_{up}=1086.2\,m/s$ (plotted in solid lines) and $v_0=1080\,m/s$ (plotted in dashed lines). Figure \ref{compare_1_clock} shows the solutions of clockwise maneuvers with $v_0=\bar{v}_{down}=1694.3\, m/s$ (plotted in solid lines) and $v_0=1690\, m/s$ (plotted in dashed lines). The red star points represent the touching point of the trajectories with the surface $S_2$ (where $x_3(t)$ touches $x_3^\ast$).
It is shown in Figure \ref{compare_1} that there is no singular arc in the trajectories when $v_0 < \bar{v}_{up}$ (resp., in Figure \ref{compare_1_clock} when $v_0 < \bar{v}_{down}$).
The control switchings two times and the $\bar{x}_3$ associated with the dashed line is smaller than $x_3^\ast=x_{3f}+\pi/2$ (resp. bigger than $x_3^\ast=x_{3f}-\pi/2$). 

\begin{figure}[h] 
        \includegraphics[scale=0.6]{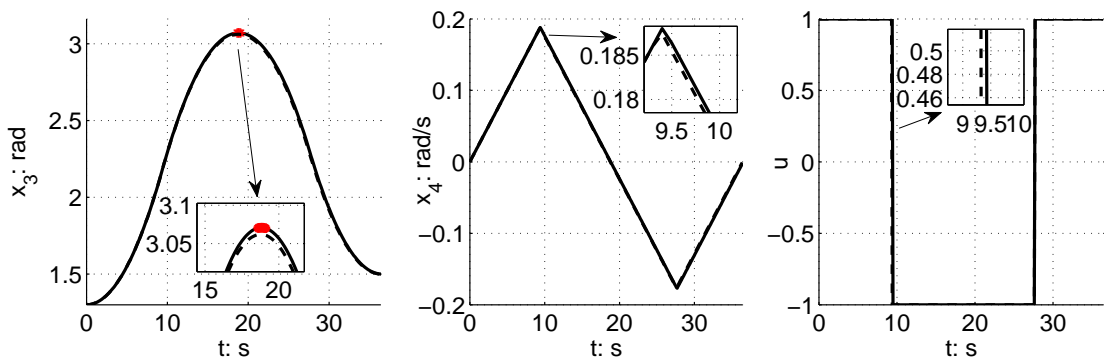}      
        \caption{Time history of $x_3$, $x_4$ and $u$ when $v_0=\bar{v}_{up}$ and $v_0=1080\,m/s$}
        \label{compare_1}
\end{figure}

\begin{figure}[h] 
        \includegraphics[scale=0.6]{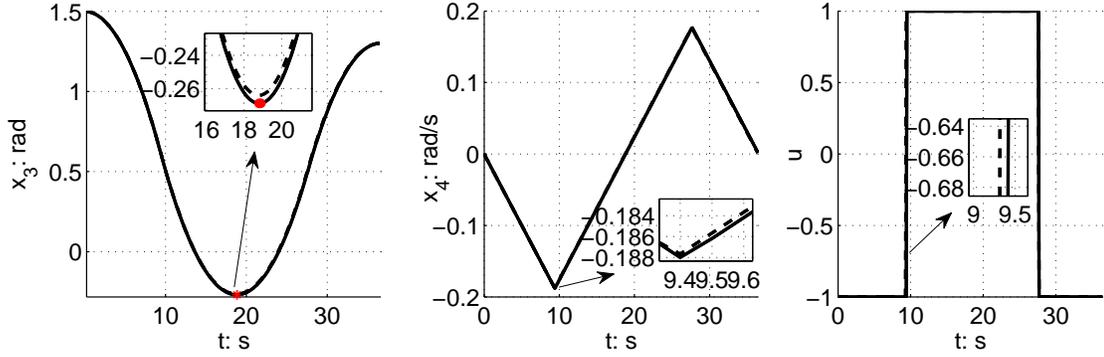}      
        \caption{Time history of $x_3$, $x_4$ and $u$ when $v_0=\bar{v}_{down}$ and $v_0=1690\,m/s$}
        \label{compare_1_clock}
\end{figure}

In Remark \ref{rem_predic}, we mentioned that, when the condition \eqref{pitch_cond} is not satisfied, there are more bang arcs until the appearance of a singular arc. We will show next the solutions with more switchings. As remarked, these results will show that the extremals will get closer to the singular surface $S$ when more bang arcs are present.

\paragraph{Flat-Earth case with more switchings.}
In fact, we can compute the corresponding value of $v_0$ for optimal controls with different number of switchings, in the following way.
Let us assume that the optimal control $u$ has $2m$, $m=1,\cdots,N$ switchings and $u(0)=u_0$ being $+1$ or $-1$, i.e.,
\begin{equation*}
u(t) = \begin{cases}
 u_0, & t\in [\tau_0,\tau_1],\\
-u_0, & t\in [\tau_{4j-3},\tau_{4j-1}], \\
 u_0, & t\in [\tau_{4j-1},\tau_{4j+1}], \\
-u_0, & t\in [\tau_{4m-3},\tau_{4m-1}], \\ 
 u_0, & t\in [\tau_{4m-1},\tau_{4m}],
\end{cases}
\end{equation*}
with $j=1,\cdots,(m-1)$, $t_0=\tau_0$, $t_f=\tau_{4m}$, then we know that $\varphi(\tau_{2k+1})=p_4(\tau_{2k+1})=0$, $k=0,\cdots,2m-1$. Here we have additionally $h_1(\tau_{2m})=p_4(\tau_{2m})=0$, because the maximum $v_0$ corresponding to $2m$ switchings happens when $u$ is about to have one more switchings between $\tau_{2m-1}$ and $\tau_{2m+1}$. 

Let $q = \big( x_3(\tau_{2k}) \big)_{k=1,\cdots,2m-1}$ be the variable vector (of dimension $2m-1$). On Figure \ref{moreswitches} are represented $p_4(t)$, $p_3(t)$, $x_4(t)$ and $x_3(t)$ for an anticlockwise maneuver with $m=3$, the variable $q=(q_1,\cdots,q_5)$ is of dimension $2m-1$.

\begin{figure}[h] 
\centering
        \includegraphics[scale=0.8]{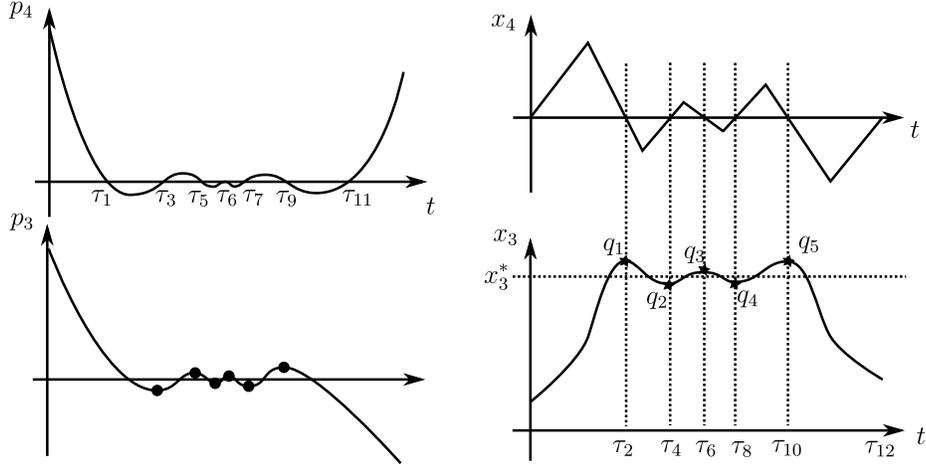}      
        \caption{Example of trajectory associated with optimal control of $6$ switchings.}
        \label{moreswitches}
\end{figure}

Using \eqref{p4t}, we derive $2m-1$ constraints on $q$ without the adjoint vector $p$, i.e.,
\begin{equation} \label{qeqn}
\frac{\tau_{k_1} - \tau_{k_2}}{\tau_{k_3}-\tau_{k_4}} = 
\frac{\int_0^{\tau_{k_1}} \int_0^\tau \cos(x_3(s)-\gamma_f) \,ds\,d\tau-\int_0^{\tau_{k_2}} \int_0^s \cos(x_3(s)-\gamma_f) \,ds\,d\tau}
{\int_0^{\tau_{k_3}} \int_0^\tau \cos(x_3(s)-\gamma_f) \,ds\,d\tau - \int_0^{\tau_{k_4}} \int_0^\tau \cos(x_3(s)-\gamma_f) \,ds\,d\tau},
\end{equation}
where $k_1,k_2,k_3,k_4 \in \{2k+1\mid k=0,\cdots,2m-1\} \cup \{2m\}$ and $k_1 \neq k_2$, $k_3 \neq k_4$. Note that at least one of these equations must involve $\tau_{2m}$.

Since $x_3(\tau_{2k})$, $k=1,\cdots,2m-1$ are local extrema, we must have $x_4(\tau_{2k})=0$, $k=1,\cdots,2m-1$. By integrating the system from $x(0)=x_0$ and requiring  that
\begin{align*}
& x_4(\tau_{2k})=0, \quad k=1,\cdots,2m-1,\\
& x_3(\tau_{2k})=q, \quad k=1,\cdots,2m-1,\\
& x_3(t_f)=x_{3f},\quad x_4(t_f)=0,
\end{align*}
we can parametrize the $\tau_k$, $k=1,\cdots,4m$ by $q$, and hence as well the trajectories $x_3(t)$ and $x_4(t)$ which are parametrized by $\tau_k$, $k=1,\cdots,4m$. More precisely, we have
\begin{align*}
& \tau_1 = -\frac{x_{40}}{b} + \sqrt{\frac{x_{40}^2}{2b^2}+\frac{|q(1)-x_{30}|}{b}},\quad
  \tau_2 =  \tau_1 + \sqrt{\frac{x_{40}^2}{2b^2}+\frac{|q(1)-x_{30}|}{b}},\\
& \tau_{2k+1} =  \tau_{2k} + \sqrt{ \frac{|q(k)-q(k+1)|}{b}},\quad
  \tau_{2k+2} =  \tau_{2k+1} + \sqrt{ \frac{|q(k)-q(k+1)|}{b}},\quad k=1,\cdots,2m-2,\\
& \tau_{4m-1} = \tau_{4m-2} + \sqrt{\frac{|q(2m-1)-x_{3f}|}{b}},\quad
  \tau_{4m}   =  \tau_{4m-1} + \sqrt{\frac{|q(2m-1)-x_{3f}|}{b}}.
\end{align*}
Hence, we can get the value of $q$ by solving \eqref{qeqn}. Then taking $v_0$ as variable and $\gamma(t_f)=\gamma_f$ as shooting function, we can derive the maximum $v_0$ that can be used when we expect the control to have $2m-1$ switchings. 

Using this method, we get that, in the anticlockwise case, when $v_0 \in (\bar{v}_{up},1183.4 ]\, m/s$, the control $u(t)$ has two switchings. When $v_0 \in (1183.4,1999.3]\, m/s$, the control $u(t)$ has four switchings. Then when $v_0 \in (1999.3,2132.1]\,m/s$, the control $u(t)$ has six switchings.  

\begin{figure}[h] 
\centering
        \includegraphics[scale=0.45]{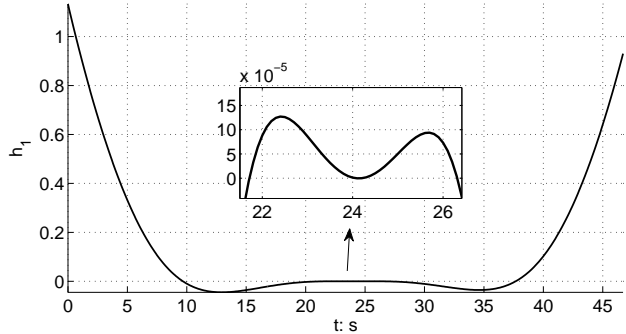}      
        \caption{Switching function $\varphi(t)$ when $v_0=1999.3\,m/s$ in the anticlockwise case.}
        \label{4swiches}
\end{figure}

Figures \ref{4swiches} and \ref{6swiches} give the time history of the switching function $\varphi(t)=h_1(t)$ when $v_0=1999.3\,m/s$ and $v_0=2132.1\,m/s$, respectively. 
Observing from the zoom-in windows of the figures, we see that the switching function is almost equal to zero when $t \in [22,26]\,s$ and $t \in [22,28]\,s$. This implies that the associated extremals are very close to the singular surface $S$ along these time intervals. These figures also show that the additional bang arcs lead rapidly the extremals to get closer to the singular surface $S$ (see Remark \ref{rem_predic}).

\begin{figure}[h] 
\centering
        \includegraphics[scale=0.45]{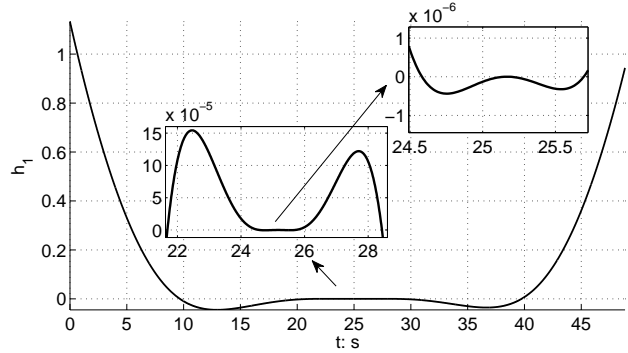}      
        \caption{Switching function $\varphi(t)$ when $v_0=2132.1\,m/s$ in the anticlockwise case.}
        \label{6swiches}
\end{figure}

Note that when $u(t)$ has $2m$ switchings, the trajectory of $x_3(t)$ has between $\max(0,2m-2)$ and $2m$ contact points with the surface $S_2$.
Figure \ref{compare_2} shows the comparison of solutions with $v_0=1350\,m/s$ (solid line) and $v_0=1683\,m/s$ (dashed line). They both belong to the four switchings case, i.e., $m=2$. We can see that the solid line touches the surface $S_2$ two times, while the dashed line touches four times.
\begin{figure}[h] 
        \includegraphics[scale=0.6]{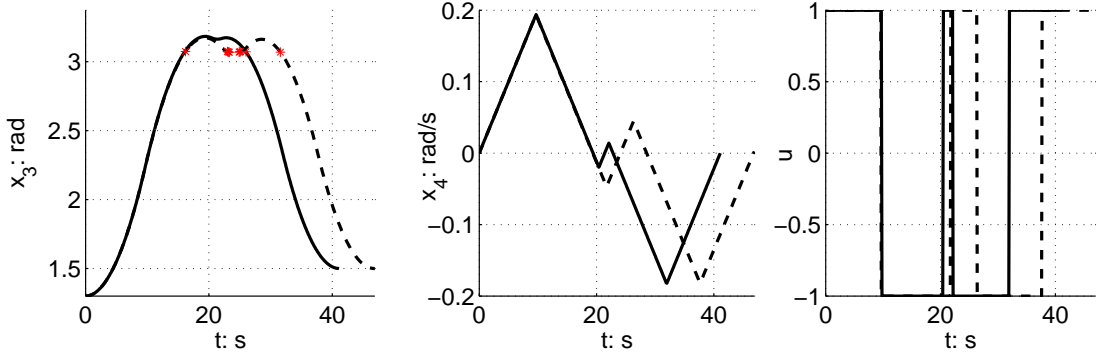}      
        \caption{Time history of $x_3$, $x_4$ and $u$ when $v_0=1350\,m/s$ and $v_0=1683\,m/s$}
        \label{compare_2}
\end{figure}

\medskip
\paragraph{Non-flat case.}
When $c >0$, according to Corollary \ref{cor_predic}, there does not exist any singular arc for anticlockwise maneuvers when $v_0 \leq \bar{v}_{up}$. 
For clockwise maneuvers, if $v_0 \leq \tilde{v}_{down} = 1624.3\,m/s$, then there is no singular arc (this condition is obtained by solving $\tilde{S}_C=0$ with $c=0$ and $c_1=10^{-6}$). The assumptions are also verified.

In Figure \ref{compare_3}, setting $v_0=\bar{v}_{up}$, we compare in anticlockwise case the solution with $c>0$ (plotted with solid line) and the solution with $c=0$ (plotted with dashed line). The trajectory $x_3(t)$ in the flat-Earth case in fact reaches the surface $S_2$ in smaller time than in the non-flat case. 

\begin{figure}[h] 
        \includegraphics[scale=0.6]{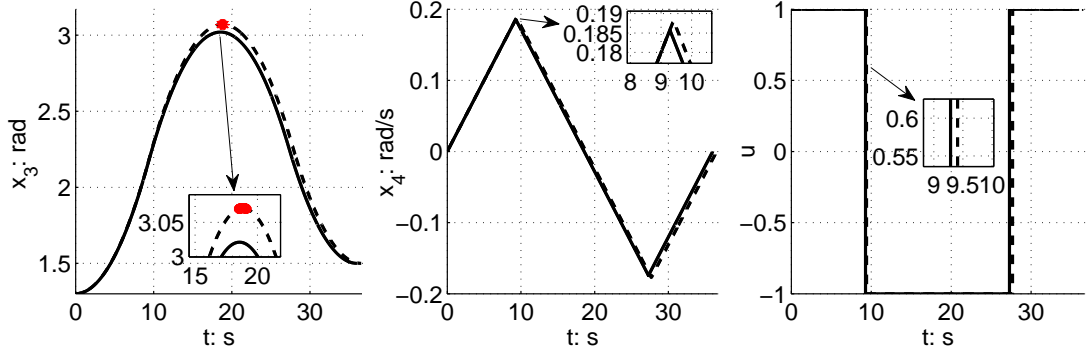}      
        \caption{Time history of $x_3$, $x_4$ and $u$ when $c=0$, $c=10^{-6}$ and $v_0=\bar{v}_{up}$}
        \label{compare_3}
\end{figure}

Let $v_0=\tilde{v}_{down}=1624.3\, m/s$. Figure \ref{compare_4} gives a comparison in the clockwise cases of the solution with $c>0$  (plotted with solid lines) and the solution with $c=0$ (plotted with dashed lines). Both trajectories do not touch the surface $S_2$ and the trajectory in the non-flat case gets ``closer" to $S_2$. The control switchings two times and there is no singular arc.

\begin{figure}[h] 
        \includegraphics[scale=0.5]{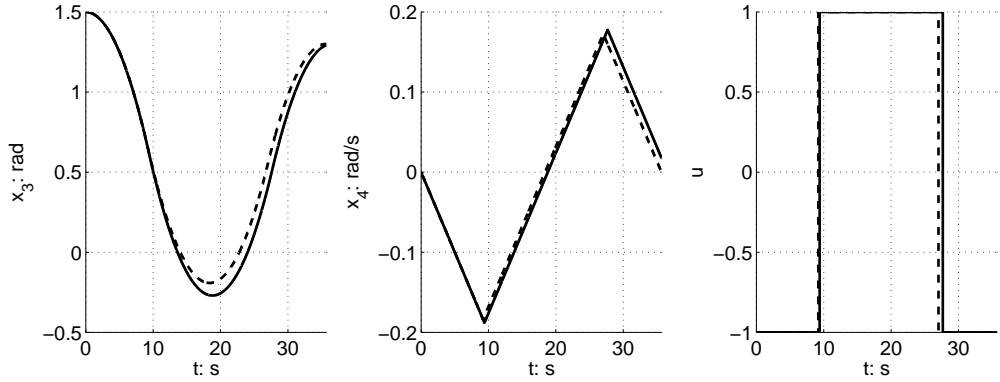}      
        \caption{Time history of $x_3$, $x_4$ and $u$ when $c=0$ and $c=10^{-6}$}
        \label{compare_4}
\end{figure}

In other simulations, we also observe that when $v_0<\tilde{v}_{down}$, the optimal control only has two switchings and $x_3$ will not reach to $x_3^\ast$. However, when $v_0>\tilde{v}_{down}$, new bang arcs appear and the trajectory tends to have chattering arcs. These results illustrate Corollary \ref{cor_predic} and Remark \ref{rem_predi_c}.

\subsection{Sub-optimal strategies}\label{sec_SuboSolu}
Let $N$ be a positive integer.
We consider a subdivision $0=t_0\leq t_1\leq\cdots\leq t_N=t_f$ of the interval $[0,t_f]$ (where $t_i$ are unknown), and we consider piecewise constant controls over this subdivision, thus enforcing the control to switch at most $N$ times. 
We consider the optimal control problem $\MTTP$ with this restricted class of controls, that we denote by $\MTTP_N$.

Solving this problem provides what we call a \emph{sub-optimal strategy} (with at most $N$ switchings), because the optimal value of $\MTTP_N$ must be less than or equal to the optimal value of $\MTTP$.

By the way, we expect that, $\MTTP_N$ $\Gamma$-converges to $\MTTP$ as $N\rightarrow+\infty$, meaning that, in particular, the optimal value of $\MTTP_N$ converges to that of $\MTTP$. We will come back on this issue later.

As in classical direct methods in optimal control, we propose to solve numerically the problem $\MTTP_N$, where the unknowns are the nodes $t_i$ of the subdivision, and the values $u_i$ of the control over each interval $(t_i,t_{i+1})$.
More conveniently, instead of considering the switching times $t_i$ as unknowns, we consider the durations $t_{i+1}-t_i$ as unknowns. Note that these durations may be equal to $0$.

The control is kept constant along each interval of the subdivision, but in order to discretize the state in a finer way, we consider another (much) finer subdivision to compute the discretized state. 

We solve the resulting optimization problem by using \texttt{IPOPT} (see \cite{IPOPT}) combined with the modeling language \texttt{AMPL} (see \cite{Fourer}). 

\paragraph{Numerical results for anticlockwise maneuvers.}
We consider first the case of anticlockwise maneuvers. Let $v_0=3000\,m/s$. For $N=500$, the numerical optimal solution of $\MTTP_N$ is provided on Figures \ref{control_optimal} and \ref{state_optimal}. 
This simulation provides numerical evidence of the fact that we have a singular arc for $t \in [25.7,28.1] \, s$, with a chattering phenomenon at the junction points with the singular arc (see Figure \ref{control_optimal}, on the right, where a zoom is made on those points). The singular control takes values in $[-0.0016,-0.0013]$.

Moreover, the coordinates $x_3(t)$ and $x_4(t)$ oscillate around $x_3=x_3^{\ast}$ and $x_4=0$ respectively, and the coordinates $x_1(t)$ and $x_2(t)$ oscillate around a straight line in the vicinity of the singular arc of the flat-Earth case. This indicates that the singular arc of the non-flat case does not vary much from that of the flat-Earth case.

\begin{figure}[h] \centering
        \includegraphics[scale=0.6]{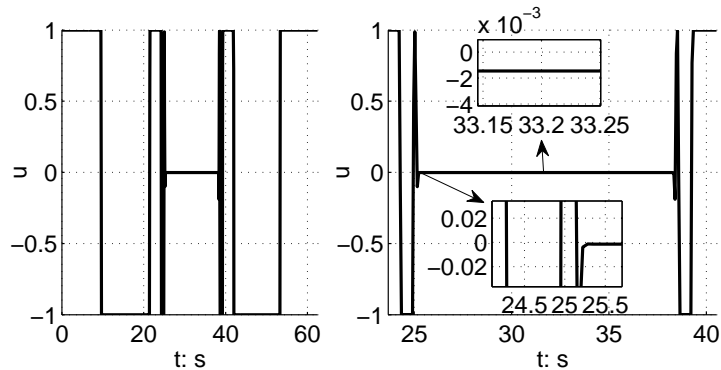}      
        \caption{Control $u(t)$ in anticlockwise maneuver}
        \label{control_optimal}
\end{figure}

\begin{figure}[h] 
	\includegraphics[scale=0.6]{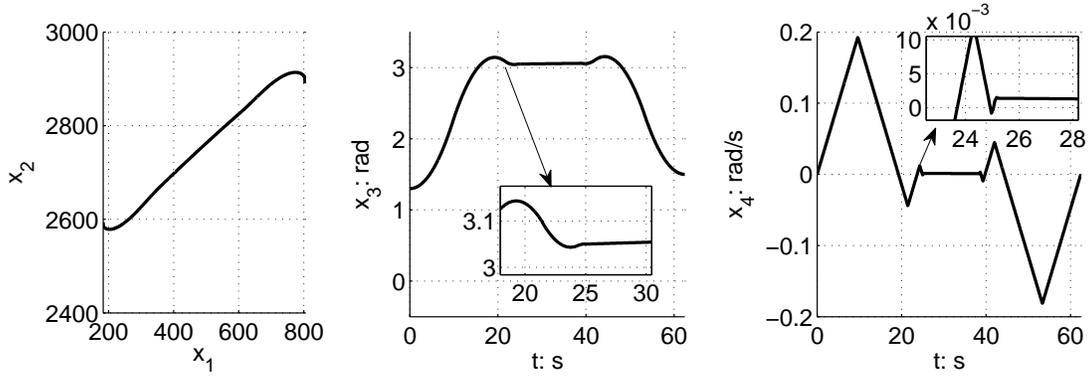}
	\caption{State variable $x(t)$ in anticlockwise maneuver.}
	\label{state_optimal}
\end{figure}

\paragraph{Numerical results for clockwise maneuvers.}
For clockwise maneuvers, still taking $N=500$, the numerical optimal solution of $\MTTP_N$ is provided on Figures \ref{control_optimal_2} and \ref{state_optimal_2}. By comparing the clockwise maneuver in Figure \ref{control_optimal_2} and \ref{state_optimal_2} and the anticlockwise maneuver in Figure \ref{control_optimal} and \ref{state_optimal}, we see that, when $x_4(0)=0$, to realise the same $|\gamma_f-\gamma_0|$, one need $62.43\,s$ for the anticlockwise case and only $59.26\,s$ for the clockwise case.

\begin{figure}[h] \centering
	\includegraphics[scale=0.6]{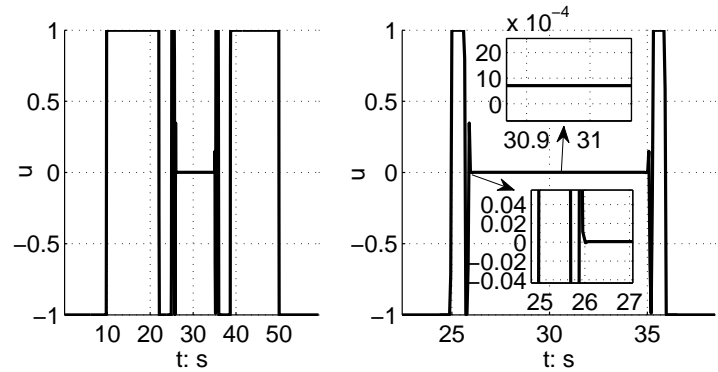}     
	\caption{Optimal control in clockwise maneuver.}
	\label{control_optimal_2}
\end{figure}

\begin{figure}[h]
	\includegraphics[scale=0.6]{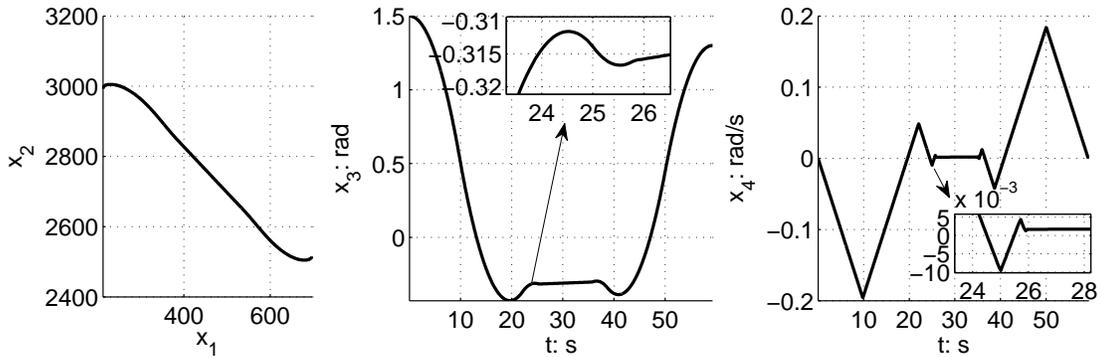}
	\caption{State $x(t)$ in clockwise maneuver.}
	\label{state_optimal_2}
\end{figure}

\paragraph{$\Gamma$-convergence of $\MTTP_N$ towards $\MTTP$.}
It seems natural to expect that, if $N\rightarrow +\infty$, then the solution of $\MTTP_N$ converges to the solution (if it is unique) of $\MTTP$. At least, $\Gamma$-convergence is expected. Such an analysis is beyond the scope of the present paper, however it is interesting to provide numerical simulations, for an anticlockwise maneuver, with several values of $N$:
\begin{equation*}
	N\in\{6,8,10,12,14,16,18,20,30,40,50,100,200,300,400\}.
\end{equation*}
Figure \ref{fig_uN} provides the numerical optimal control obtained for $\MTTP_N$. We observe that, when $N$ becomes larger, then the optimal control seems to converge to its expected limit, that is the optimal control of $\MTTP$ with a singular arc and chattering.
On Figure \ref{fig_tN}, we have reported the values of the maneuver time, in function of $N$. We observe that they seem decrease exponentially with respect to $N$.
This numerical observation is important because, in practice, this means that it is not necessary to take $N$ too large. Even with quite small values of $N$, the minimal time obtained for $\MTTP_N$ seems to be very close to the minimal time for $\MTTP$. Hence the sub-optimal strategy seems to be a very good solution in practice, to bypass the problems due to chattering.

\begin{figure}[h] 
	\includegraphics[scale=0.55]{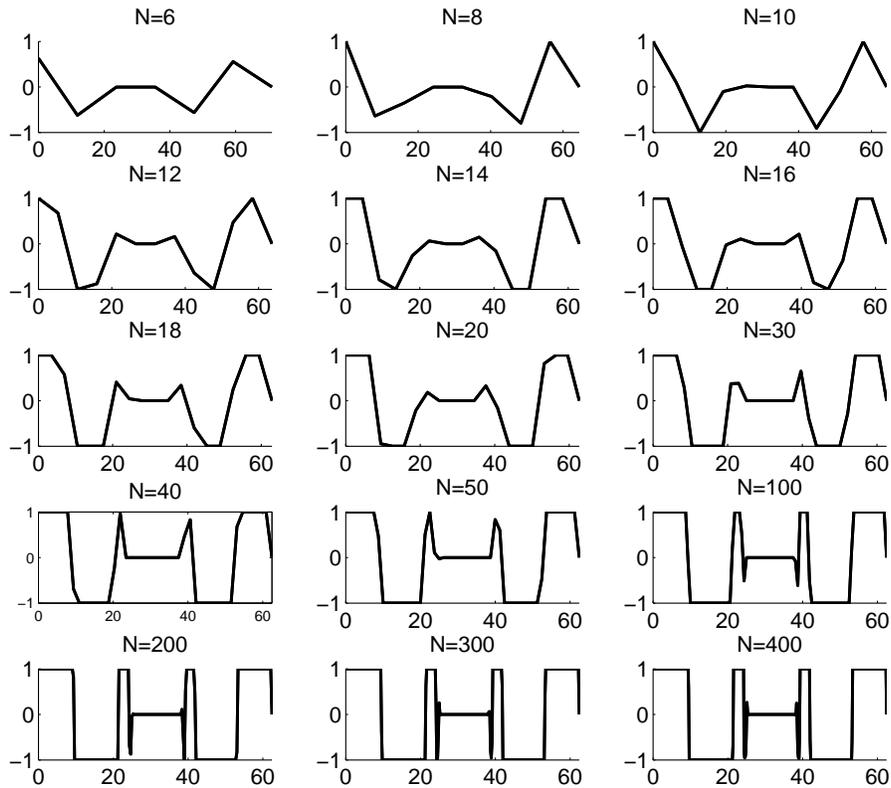}      
	\caption{Control $u(t)$ with different discretization step $N$.}	
	\label{fig_uN}
\end{figure}

\begin{figure}[h] \centering
	\includegraphics[scale=0.6]{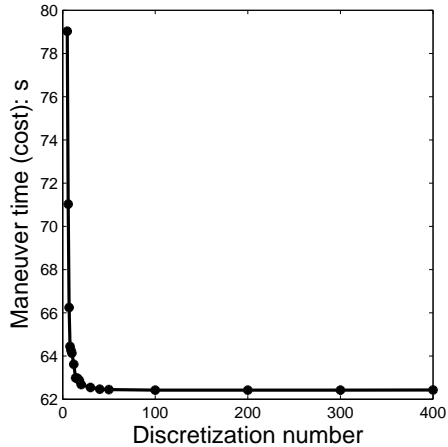}      
	\caption{Maneuver time $t_f$ with respect to the discretization step $N$.}	
	\label{fig_tN}
\end{figure}

We conclude with the following conjecture.

\medskip

\noindent\textbf{Conjecture.}
With obvious notations, we denote by $(x^N(\cdot),u^N(\cdot),t_f^N)$ the optimal solution of $\MTTP_N$, and by $(x(\cdot),u(\cdot),t_f)$ the optimal solution of $\MTTP$ (assuming that they are unique). Then $t_f^N\rightarrow t_f$ exponentially, $x^N(\cdot)\rightarrow x(\cdot)$ in $C^0$-topology, and $u^N(\cdot)\rightarrow u(\cdot)$ in $L^1$-topology, as $N\rightarrow +\infty$.

\begin{rem}
Such convergence properties have been established in \cite{HT,ST}, but for problems not involving any singular arc. Here, the difficulty of establishing such a result (in particular, for the control) is in the presence of an optimal singular arc.
\end{rem}

\begin{rem}
These simulations were done by using hot-restart, that is, by using the solution of the problem $\MTTP_N$ to initialize the problem with a larger value of $N$.
\end{rem}




\end{document}